\newtheorem{thm}{Theorem}[section]
\newtheorem*{thm*}{Theorem}
\newtheorem{cor}[thm]{Corollary}
\newtheorem{lem}[thm]{Lemma}
\newtheorem{prop}[thm]{Proposition}
\newtheorem*{prop*}{Proposition}
\newtheorem*{conj*}{Conjecture}
\newtheorem*{dfn*}{Definition}
\theoremstyle{definition}
\newtheorem{rem}[thm]{\textbf{Remark}}
\newtheorem{example}[thm]{Example}
\newtheorem*{rmk*}{Remark}
\newtheorem*{fact*}{Fact}
\theoremstyle{proof}
\newcommand{\norm}[1]{\left\Vert#1\right\Vert}
\newcommand{\abs}[1]{\left\vert#1\right\vert}
\newcommand{\set}[1]{\left\{#1\right\}}
\newcommand{\brac}[1]{\left(#1\right)}
\newcommand{\scalar}[1]{\left \langle #1 \right \rangle}
\newcommand{\sscalar}[1]{\langle #1 \rangle}
\newcommand{\Real}{\mathbb{R}}
\newcommand{\eps}{\varepsilon}
\newcommand{\E}{\mathcal{E}}
\newcommand{\D}{\mathcal{D}}
\newcommand{\grad}{\nabla}
\newlength{\defbaselineskip}
\newcommand{\setlinespacing}[1]           {\setlength{\baselineskip}{#1 \defbaselineskip}}
\numberwithin{equation}{section}
\newcommand{\B}{\mathcal{B}}
\begin{document}

\title{A Generalization of Caffarelli's Contraction Theorem via (reverse) Heat Flow}
\author{Young-Heon Kim\textsuperscript{1} and Emanuel Milman\textsuperscript{2}}

\footnotetext[1]{
Department of Mathematics,
University of British Columbia,
Vancouver, BC
Canada.
Email: yhkim@math.ubc.ca.}

\footnotetext[2]{
Department of Mathematics,
University of Toronto,
40 St. George Street,
Toronto, Ontario M5S 2E4,
Canada.
Phone: +1-416-946-5440.
Email: emilman@math.toronto.edu.\\
Both authors were partially supported by the Institute for Advanced Study through NSF grant DMS-0635607.
YHK is also supported by Canadian NSERC discovery grant 371642-09.\\
2010 Mathematics Subject Classification: 35Q80,37C10,35B50,35B99.}

\maketitle

\begin{abstract}
A theorem of L. Caffarelli implies the existence of a map, pushing forward a source Gaussian measure to a target measure which is more log-concave than the source one, which contracts Euclidean distance (in fact, Caffarelli showed that the optimal-transport Brenier map $T_{opt}$ is a contraction in this case).
We generalize this result to more general source and target measures, using a condition on the third derivative of the potential, by providing two different proofs. The first uses a
map $T$, whose inverse is constructed as a flow along an advection field associated to an appropriate heat-diffusion process. The contraction property is then reduced to showing that log-concavity is preserved along the corresponding diffusion semi-group, by using a maximum principle for parabolic PDE. In particular, Caffarelli's original result immediately follows by using the Ornstein-Uhlenbeck process and the Pr\'ekopa--Leindler Theorem. The second uses the map $T_{opt}$ by generalizing Caffarelli's argument, employing in addition further results of Caffarelli. As applications, we obtain new correlation and isoperimetric inequalities.
\end{abstract}

\section{Introduction}

The starting point of this work is the following ``Contraction Theorem" of L. Caffarelli \cite{CaffarelliContraction}:

\begin{thm*}[Caffarelli]
Let $\mu = \exp(-Q(x)) dx$ and $\nu = \exp(-(Q(x) + V(x))) dx$ denote two Borel probability measures on Euclidean space $(\Real^n,\abs{\cdot})$,
where $Q$ denotes a quadratic function, i.e.
\begin{equation} \label{eq:quad}
Q(x) = \scalar{Ax,x} + \scalar{b,x} + c ~,
\end{equation}
with $A$ positive-definite, and $V$ is a convex function. Then the Brenier optimal-transport map $T = T_{opt}$ pushing forward $\mu$ onto $\nu$ is a contraction:
\[
\forall x,y \in \Real^n \;\;\; |T(x) - T(y)| \leq |x - y| ~.
\]
\end{thm*}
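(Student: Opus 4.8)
The plan is to show that the Brenier potential $\varphi$ --- the (essentially unique) convex function with $T=\grad\varphi$ and $T_\#\mu=\nu$ --- satisfies $D^2\varphi(x)\preceq I$ for every $x$, which is exactly the asserted $1$-Lipschitz bound on $T$. First I would reduce to $V$ smooth (and finite): mollifying $V$ yields smooth convex $V_\eps$ with $\nu_\eps\propto\exp(-(Q+V_\eps))\,dx$ converging weakly to $\nu$, hence the corresponding Brenier maps converge (by stability of optimal transport) to $T$, and the $1$-Lipschitz property survives pointwise limits. With $V\in C^\infty$, since $\mu$ is Gaussian and $\nu$ has a smooth positive density, the regularity theory for the Monge--Amp\`ere equation gives $\varphi\in C^\infty$, $D^2\varphi>0$, solving pointwise
\[
\log\det D^2\varphi(x)=W(\grad\varphi(x))-Q(x),\qquad W:=Q+V.
\]

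The core is a maximum-principle argument for the top eigenvalue of $\Phi:=D^2\varphi$. Suppose, for contradiction, that $\Lambda:=\sup_x\lambda_{\max}(\Phi(x))>1$ and --- setting the non-compactness aside for now --- that this supremum is attained at $x_0$ with unit top eigenvector $e_0$. Freezing the direction $e_0$, set $u(x):=\scalar{\Phi(x)e_0,e_0}$, so that $u(x)\le\lambda_{\max}(\Phi(x))\le\Lambda=u(x_0)$ and $x_0$ is a global maximum of $u$. Differentiating the Monge--Amp\`ere equation twice in the fixed direction $e_0$ gives the identity
\[
\mathcal{L}u=\scalar{\grad^2W(\grad\varphi)\,\Phi e_0,\Phi e_0}-\scalar{2A\,e_0,e_0}+\mathrm{tr}\!\brac{\Phi^{-1}(\partial_{e_0}\Phi)\,\Phi^{-1}(\partial_{e_0}\Phi)},
\]
where $\mathcal{L}:=\sum_{i,j}(\Phi^{-1})_{ij}\partial_i\partial_j-\scalar{\grad W(\grad\varphi),\grad\,\cdot\,}$ is linear, second order, elliptic, with no zeroth-order term. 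The trace term equals $\mathrm{tr}\big((\Phi^{-1/2}(\partial_{e_0}\Phi)\Phi^{-1/2})^2\big)\ge0$, while at the interior maximum $\grad u(x_0)=0$ and $D^2u(x_0)\preceq0$, so $\mathcal{L}u(x_0)=\mathrm{tr}(\Phi^{-1}(x_0)D^2u(x_0))\le0$.

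The contradiction uses Gaussianity decisively: as $Q$ is quadratic, $\grad^2Q\equiv 2A$ is \emph{constant}, hence $\grad^2W(\grad\varphi(x_0))=2A+\grad^2V(\grad\varphi(x_0))\succeq 2A$ because $V$ is convex. Together with $\Phi(x_0)e_0=\Lambda e_0$ this yields $\scalar{\grad^2W(\grad\varphi(x_0))\Phi e_0,\Phi e_0}=\Lambda^2\scalar{\grad^2W(\grad\varphi(x_0))e_0,e_0}\ge\Lambda^2\scalar{2A\,e_0,e_0}$, so $\mathcal{L}u(x_0)\ge(\Lambda^2-1)\scalar{2A\,e_0,e_0}>0$, contradicting $\mathcal{L}u(x_0)\le0$. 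Hence $\Lambda\le1$, i.e. $\norm{D^2\varphi}_{\mathrm{op}}\le1$ pointwise, so $T=\grad\varphi$ is a contraction. I expect the genuine obstacle to be not this computation but the non-compactness of $\Real^n$, since $\Lambda$ need not be attained; I would handle it by first proving the theorem for $V$ with globally bounded Hessian --- where the Monge--Amp\`ere equation furnishes a priori bounds on $\varphi$ and its derivatives, allowing the argument to be run at a near-maximizer, e.g. after adding a small penalization that vanishes in the limit --- and then recovering the general case via the approximation above. (Twice-differentiating the equation is justified by Schauder bootstrapping.) Note that Gaussianity entered \emph{only} through the constancy of $\grad^2Q$; this is precisely the structural feature relaxed in the present paper, at the price of a hypothesis on the third derivative of the potential.
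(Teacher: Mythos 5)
Your argument is correct and is essentially the argument the paper itself runs in Section \ref{sec:revisit} (Caffarelli's original proof, specialized to quadratic $U$): differentiate the Monge--Amp\`ere equation twice in the direction of the top eigenvector of $D^2\varphi$ at a maximum point of the largest eigenvalue, discard the two sign-definite trace terms, and use $\nabla^2(Q+V)\succeq 2A$ together with $D^2\varphi(x_0)e_0=\Lambda e_0$ to force $\Lambda\le 1$. The one step you rightly flag as the genuine obstacle --- attainment of the supremum --- is handled in the paper (following Caffarelli) by restricting $\nu$ to a compact convex set, which makes $D^2_{\xi,\xi}\varphi$ decay at infinity so the maximum is attained, rather than by your bounded-Hessian/penalization scheme; otherwise the two proofs coincide.
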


Let us recall some of the notions used above. A Borel map $T$ is said to push-forward $\mu$ onto $\nu$, denoted $T_*(\mu) = \nu$, if $\nu(A) = \mu(T^{-1}(A))$ for any Borel set $A$. Among all such maps $T$, it is natural to minimize the squared-distance transport cost: $W_2^2(\mu,\nu):= \inf_{T_*(\mu) = \nu} \int |T(x) - x|^2 d\mu(x)$ - this is precisely the Monge (or Monge-Kantorovich) problem for a quadratic cost. The Brenier map $T_{opt} : \Real^n \rightarrow \Real^n$ pushing forward $\mu$ onto $\nu$ is the $\mu$-a.e. unique map for which the latter infimum is attained; it is precisely characterized by the property of being the gradient of a convex function $\varphi: \Real^n \rightarrow \Real$, as first proved by Y. Brenier \cite{BrenierMap}. It is known that the optimal-transport distance $W_2$ metrizes the Wasserstein space $W_2(\Real^n)$ of square integrable Borel probability measures on $\Real^n$ equipped with a suitable weak topology. We refer to \cite{VillaniTopicsInOptimalTransport,VillaniOldAndNew} for a comprehensive account on this and related topics.

\subsection{Main Result}

Fix an orthogonal decomposition of $(\Real^n,|\cdot|)$ into subspaces $\set{E_i}_{i=0}^k$.
\begin{dfn*}
We will say that a function $F: \Real^n \rightarrow \Real$ \emph{satisfies our symmetry assumptions} if it is invariant under the action of the subgroup $O(E_1,\ldots,E_k) := 1 \times O(E_1) \times \ldots O(E_k)$ of the orthogonal group $O(n)$, or equivalently, if:
\begin{equation} \label{eq:symmetry}
\text{$\exists \,  \Phi : \Real^{dim E_0 + k} \rightarrow \Real \;\;\text{so that}\;\; F(x) = \Phi(Proj_{E_0} x,|Proj_{E_1} x|,\ldots,|Proj_{E_k} x|)$} ~.
\end{equation}
We will similarly say that a map $T : \Real^n \rightarrow \Real^n$ \emph{satisfies our symmetry assumptions} if it commutes with the action of the latter subgroup.
\end{dfn*}

Our main result generalizes Caffarelli's Theorem as follows:

\begin{thm} \label{thm:main1}
Let $\mu = \exp(-U(x)) dx$ and $\nu = \exp(-(U(x) + V(x))) dx$, denote two Borel probability measures on Euclidean space $(\Real^n,\abs{\cdot})$. Assume that $U \in C^{3,\alpha}_{loc}(\Real^n)$ ($\alpha > 0$) is a convex function of the form:
\begin{equation} \label{eq:U}
U(x) = Q(Proj_{E_0} x) + \sum_{i=1}^k \rho_i(|Proj_{E_i} x|) ~,~ \forall i = 1 \ldots k ~ ~ \rho'''_i \leq 0 \text{ on $\Real_+$ }~,
\end{equation}
where $Q : E_0 \rightarrow \Real$ is a quadratic function as in (\ref{eq:quad}), and that $V : \Real^n \rightarrow \Real$ is convex and satisfies
our symmetry assumptions (\ref{eq:symmetry}).
Then there exists a map $T: \Real^n \rightarrow \Real^n$ pushing forward $\mu$ onto $\nu$ and satisfying our symmetry assumptions which is a contraction.
\end{thm}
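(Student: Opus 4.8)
The plan is to produce $T$ as the locally uniform limit of the inverses $S_t^{-1}$ of the flow maps of an advection field attached to a heat-diffusion interpolation between $\nu$ and $\mu$, and to reduce the contraction property to the statement that log-concavity is preserved along the corresponding diffusion semigroup. Let $L := \Delta - \grad U\cdot\grad$ be the generator of the diffusion reversible with respect to $\mu$, with semigroup $P_t := e^{tL}$. Put $f_0 := d\nu/d\mu = \exp(-V)$ (note $\int f_0\,d\mu = 1$) and $f_t := P_t f_0$, so that $\partial_t f_t = L f_t$ and, by ergodicity of the diffusion (the log-concave probability measure $\mu$ satisfies a Poincar\'e inequality), $f_t \to 1$ in $L^1(\mu)$. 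Then $\rho_t := f_t \exp(-U)$ is the Lebesgue density of a curve of probability measures $\nu_t$ with $\nu_0 = \nu$, $\nu_\infty = \mu$, solving the continuity equation $\partial_t\rho_t + \grad\cdot(\rho_t\, b_t) = 0$ with the \emph{gradient} advection field $b_t := -\grad\log f_t = \grad h_t$, $h_t := -\log f_t$. Letting $S_t$ be the flow of $b_t$ ($\tfrac{d}{dt}S_t = b_t\circ S_t$, $S_0 = \mathrm{id}$) we have $(S_t)_*\nu = \nu_t$, hence $(S_t^{-1})_*\nu_t = \nu$ for every $t$. Since $U,V$ satisfy the symmetry assumptions, so do $f_t$, $b_t$, $S_t$, and the eventual map $T$.

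\textbf{Reduction.} Differentiating the flow, $\tfrac{d}{dt}DS_t = (\grad^2 h_t)(S_t)\,DS_t$, and since $\grad^2 h_t$ is symmetric, $\tfrac{d}{dt}\big((DS_t)^{\top}DS_t\big) = 2\,(DS_t)^{\top}(\grad^2 h_t)(S_t)\,DS_t$. Hence, \emph{provided $f_t$ is log-concave (equivalently $\grad^2 h_t \ge 0$) for every $t \ge 0$}, the matrix $(DS_t)^{\top}DS_t$ is non-decreasing from the identity, so all singular values of $DS_t$ are $\ge 1$, so $S_t^{-1}$ is $1$-Lipschitz; extracting a locally uniform limit $T := \lim_{t\to\infty}S_t^{-1}$ (Arzel\`a--Ascoli) and using $(S_t^{-1})_*\nu_t = \nu$ together with $\nu_t\to\mu$ in $W_2$ gives a $1$-Lipschitz map $T$ with $T_*\mu = \nu$. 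Everything therefore reduces to showing: \emph{if $f_0 = \exp(-V)$ is log-concave and symmetric and $U$ is as in \eqref{eq:U}, then $P_t f_0$ is log-concave for all $t\ge 0$.}

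\textbf{Preservation of log-concavity.} From $\partial_t f_t = L f_t$ one gets $\partial_t h_t = L h_t - \abs{\grad h_t}^2$, whence, for a fixed unit vector $\xi$, the scalar $w := \scalar{\grad^2 h_t\,\xi,\xi}$ satisfies
\[
\partial_t w - \Delta w + (\grad U + 2\grad h_t)\cdot\grad w \;=\; -\,\grad^3 U[\xi,\xi,\grad h_t]\;-\;2\scalar{\grad^2 U\,\xi,\grad^2 h_t\,\xi}\;-\;2\abs{\grad^2 h_t\,\xi}^2 .
\]
Assume for contradiction that $\grad^2 h_t \ge 0$ fails, and let $t_0$ be the first time it does; then there exist $x_0$ and a unit $\xi_0$ with $\grad^2 h_{t_0} \ge 0$ everywhere and $\grad^2 h_{t_0}(x_0)\xi_0 = 0$, and $w = \scalar{\grad^2 h_t\,\xi_0,\xi_0}$ has at $(t_0,x_0)$ an interior spatial minimum equal to $0$, so there $\grad_x w = 0$, $\Delta w \ge 0$, $\partial_t w \le 0$. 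The last two right-hand terms vanish at $(t_0,x_0)$ since $\grad^2 h_{t_0}(x_0)\xi_0 = 0$, so if we show $\grad^3 U[\xi_0,\xi_0,\grad h_{t_0}(x_0)] \le 0$ we obtain $\partial_t w \ge 0$ there, and a standard $\eps e^{Kt}$-perturbation turns this into the desired contradiction. Now $\grad^3 Q = 0$; moreover, by symmetry the $E_i$-component of $\grad h_{t_0}(x_0)$ equals $c_i u_i$, $u_i := Proj_{E_i}x_0/\abs{Proj_{E_i}x_0}$, with $c_i$ the radial derivative of the $E_i$-profile of $h_{t_0}$; this profile is convex in $r_i$ (as $\grad^2 h_{t_0}\ge 0$) with vanishing radial derivative at $r_i = 0$ (as $f_{t_0}$ is smooth), hence $c_i \ge 0$. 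A direct computation of $\grad^3$ of the radial summand $\rho_i(\abs{Proj_{E_i}x})$ gives
\[
\grad^3 U[\xi_0,\xi_0,\grad h_{t_0}(x_0)] \;=\; \sum_i c_i\Big(\rho_i'''(r_i)\,t_i^2 \;+\; \big(\rho_i'(r_i)/r_i\big)'\,s_i^2\Big),
\]
with $r_i := \abs{Proj_{E_i}x_0}$ and $t_i, s_i$ the radial and tangential components of $\xi_0$ in the $E_i$-block. Here $\rho_i''' \le 0$ by hypothesis, and $(\rho_i'/r)' \le 0$ because $\rho_i'(r)/r$ is the average over $[0,r]$ of the non-increasing function $\rho_i''$ (using $\rho_i'(0)=0$); since $c_i\ge 0$, the right-hand side is $\le 0$, as needed. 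This proves the claim, hence the theorem.

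\textbf{Technical points and the main obstacle.} The real work is in making the preceding step rigorous: (i) one first proves the theorem for smooth, strictly log-concave, suitably coercive $V$ and passes to the general case by mollification and $W_2$-stability; (ii) parabolic regularity and the Harnack inequality ensure $f_t$ is smooth and positive for $t>0$, so $h_t$ is a genuine $C^3$ object; (iii) the maximum principle must be run on the non-compact $\Real^n$ --- the chief difficulty --- either by using the symmetry to reduce to a weighted problem on $E_0\times\Real_+^k$ (whose radial weights $\rho_i(r)-(d_i-1)\log r$ still have non-positive third derivative) plus barriers controlling $h_t$ and its derivatives as $\abs{x}\to\infty$, or by a penalization/localization forcing the extremum to be attained; (iv) the apparent singularities at $r_i=0$ are removable by smoothness of the radial profiles; and (v) one must verify $\nu_t\to\mu$ in $W_2$, the pointwise bound underlying the Arzel\`a--Ascoli extraction, and the identity $T_*\mu=\nu$. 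In the purely quadratic case ($k=0$, $U=Q$), $P_t$ is the Ornstein--Uhlenbeck semigroup, $P_tf$ is a rescaled Gaussian average of $f$, and preservation of log-concavity is immediate from the Pr\'ekopa--Leindler theorem --- recovering Caffarelli's theorem with no PDE.
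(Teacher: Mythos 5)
Your proposal follows exactly the route of the paper: construct $S_t$ as the flow of the advection field $-\nabla\log P_t^U(e^{-V})$ coming from the continuity equation, reduce the contraction property of $T=\lim S_t^{-1}$ to preservation of log-concavity by $P_t^U$, and prove the latter by a maximum principle applied to $w=\langle \nabla^2 h_t\,\xi,\xi\rangle$ at the first time positivity fails, using the symmetry to show the relevant component of $\nabla h_{t_0}(x_0)$ points radially outward and the condition $\rho_i'''\le 0$ (together with $\rho_i'(0)=0$, hence $(\rho_i'/r)'\le 0$) to conclude $D^3U(\xi_0,\xi_0,\nabla h_{t_0}(x_0))\le 0$. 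The parabolic identity for $w$, the vanishing of the two quadratic terms at $(x_0,t_0)$, the sign argument, and the $\varepsilon e^{Kt}$ perturbation all coincide with the paper's proof of its log-concavity theorem.

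The gap is at the step you yourself flag as "the chief difficulty", and it is genuinely the hard part rather than routine: on the noncompact domain $\Real^n\times[0,T]$ neither the existence of a first failure time $t_0$ nor the attainment of an interior spatial minimum of $w$ is available, and neither of the two alternatives you sketch resolves this as stated (the symmetry reduction to $E_0\times\Real_+^k$ is still noncompact, and a bare penalization does not by itself produce a compact extremal set for a \emph{second derivative}). The paper's resolution is to localize to a ball $B(R)$ with vanishing Dirichlet data $e^{-V}\chi(|x|/R)$; but then the solution's second derivatives are \emph{not} continuous up to the parabolic corner $\partial B(R)\times\{0\}$ because the compatibility conditions $L^i(f_0)|_{\partial B(R)}=0$ fail, so one must separately prove (via delicate up-to-the-boundary estimates: uniform $C^{1+\beta}$ regularity, a quantitative blow-up rate for $\|f(\cdot,t)\|_{C^2(B(R-\lambda))}$, and uniform $C^{1,\delta}$ control of tangential derivatives) that $D^2(-\log f)\ge 0$ in a full neighborhood of $\partial B(R)\times[0,T]$, which both legitimizes the "first time" definition and excludes $x_0\in\partial B(R)$. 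A second, smaller omission: for the flow $S_t$ to be globally well defined on $\Real^n$ you need $\nabla^2 h_t$ to be \emph{uniformly bounded} on $\Real^n\times[0,T]$ (not merely nonnegative); the paper proves this by a maximum principle for $\Delta h_t$ under auxiliary assumptions ($\|\nabla V\|_{C^{1,\alpha}}<\infty$, $\|D^3U\|_{L_\infty}<\infty$) that are later removed by approximation, and some such argument must be supplied before Arzel\`a--Ascoli can be invoked. With these two pieces filled in, your argument is the paper's.
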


\begin{rmk*}
The smoothness assumption on $U$ above is immaterial, and may be dispensed if $\mu$ is approximated (say, in total-variation distance) by measures $\set{\mu_l}$ which satisfy the conditions
of Theorem \ref{thm:main1} (see Lemma \ref{lem:approx}). A prototypical example where this applies is for the functions $\rho_i(x) = |x|^{p_i}$, $p_i \in [1,2]$. The same comment applies for $V$, so it is enough to prove the theorem for smooth $U,V$, and conclude by a compactness argument detailed in Section \ref{sec3}.
\end{rmk*}

\medskip

The general formulation of Theorem \ref{thm:main1} interpolates between the following extremal cases:
\begin{itemize}
\item
$\mu$ is a product measure and $V$ is ``unconditional":
\[
\text{$U(x) = \sum_{i=1}^n \rho_i(|x_i|)$ with $\rho_i''' \leq 0$ and $V(x_1,\ldots,x_n) = V(\pm x_1,\ldots,\pm x_n)$ are convex} ~.
\]
\item
$U$ and $V$ are both radial:
\[
\text{$U(x) = \rho(|x|)$ with $\rho''' \leq 0$ and $V(x) = \Phi(|x|)$ are convex} ~.
\]
\item
$U$ is quadratic and $V$ is an arbitrary convex function.
\end{itemize}
We shall be mainly interested in the first case, since the third one follows immediately from Caffarelli's result, and the second one may be easily obtained using a one dimensional argument reproducing Caffarelli's original proof, as described in Section \ref{sec:revisit}. However, for some of the applications presented in this work, the case when $0 < dim E_0 < n$ is the most interesting. We also remark that Caffarelli's theorem has recently been generalized in other directions by Valdimarsson \cite{ValdimarssonGeneralizedCaffarreli} and Kolesnikov \cite{KolesnikovGeneralizedCaffarelli}.

\subsection{The Construction}\label{SS:construction of T}

As opposed to the non-constructive optimal-transport map $T_{opt}$, our map $T$ is obtained as a limit of diffeomorphisms $\set{T_t}_{t \geq 0}$, constructed as a (reverse) flow along an advection field generated by an appropriate heat diffusion process. Let $L$ denote the following second-order differential operator:
\begin{equation} \label{eq:L-def}
L = \exp(U) \; \nabla \cdot (\exp(-U) \nabla) = \Delta - \scalar{\nabla,\nabla U} ~,
\end{equation}
and let $P^U_t := \exp(t L) : L_\infty(\Real^n) \rightarrow L_\infty(\Real^n)$ denote the associated diffusion semi-group, characterized as solving the parabolic equation:
\begin{equation} \label{eq:semi-group}
\frac{d}{dt} P^U_t(f) = L(P^U_t(f)) ~,~ P^U_0(f) = f ~ ~ \text{(for smooth bounded functions $f$)} ~.
\end{equation}
The latter is simply the usual heat-equation with an additional first-order drift term, also known as the (linear) Fokker-Planck equation. Its invariant measure is easily checked to be $\mu = \exp(-U(x)) dx$:
\begin{equation} \label{eq:L-prop}
\int L(f) g d\mu = - \int \scalar{\nabla f, \nabla g} d\mu = \int f L(g) d\mu ~,~ \int P^U_t(f) g d\mu = \int f P^U_t(g) d\mu ~.
\end{equation}
In particular, $-L$ becomes a self-adjoint positive semi-definite operator on an appropriate dense subspace of $L_2(\mu)$.
Since $\mu$ is a log-concave probability measure, it is known that $-L$ has a non-trivial spectral-gap, from which it follows by the Spectral Theorem that $P_t^U(f) \rightarrow_{t \rightarrow \infty} \int f d\mu$ in a rather strong sense (see Section \ref{sec3}). Defining:
\begin{equation} \label{eq:nu-t-def}
\nu_t := P^U_t(\exp(-V)) \mu ~,
\end{equation}
it follows in particular that $\nu_0 = \nu$ and $\nu_t \rightarrow_{t \rightarrow \infty} \mu$, so $\set{\nu_t}$ naturally interpolate between $\nu$ and $\mu$.
We will show how to construct diffeomorphisms $\set{T_t}_{t \geq 0}$, so that each $T_t$ is a contraction satisfying our symmetry assumptions which pushes forward $\nu_t$ onto $\nu$. Theorem \ref{thm:main1} then follows by a compactness argument, ensuring that $\set{T_t}$ converge appropriately to our desired map $T$.

Our construction is in fact for the inverse-maps $S_t := T_t^{-1}$, pushing forward $\nu$ onto $\nu_t$. These diffeomorphisms are constructed as a flow along
a (time-dependent) advection field $W_t$ induced by our diffusion:
\begin{equation} \label{eq:vector-flow}
\frac{d}{dt} S_t(x) = W_t(S_t(x)) ~,~ S_0 = Id ~.
\end{equation}
To choose a consistent $W_t$, we use the well-known Continuity Equation (see e.g. \cite{VillaniTopicsInOptimalTransport}):
\[
\frac{d}{dt} \nu_t + \nabla \cdot (\nu_t W_t) = 0 ~,
\]
which allows us to pass from the Lagrangian view point (\ref{eq:vector-flow}) to an Eulerian one. We conclude using (\ref{eq:nu-t-def}) that:
\[
\frac{d}{dt} P^U_t(\exp(-V)) = - \exp(U) \; \nabla \cdot (\exp(-U) P^U_t(\exp(-V)) W_t ) ~,
\]
and to make this consistent with (\ref{eq:L-def}) and (\ref{eq:semi-group}), we choose:
\begin{equation} \label{eq:W-t-def}
W_t := - \nabla \log P^U_t(\exp(-V)) ~.
\end{equation}

It remains to show that the maps $S_t$ are expansions, i.e. $|S_t(x) - S_t(y)| \geq |x-y|$. Being diffeomorphisms, this is equivalent
to requiring that the maps are expansions \emph{locally}:
\[
(DS_t)^* DS_t \geq Id ~.
\]
Differentiating this inequality in $t$ and using (\ref{eq:vector-flow}), we see that it suffices to show that $D W_t + (D W_t)^*\geq 0$ for all $t \geq 0$. By (\ref{eq:W-t-def}), this is equivalent to showing that:
\[
 - D^2 \log P^U_t(\exp(-V)) \geq 0 \;\;\; \forall t \geq 0 ~.
\]

\subsection{The Reduction}

This is formulated in the following result, which we believe is of independent interest:
\begin{thm} \label{thm:main2}
Under the assumptions of Theorem \ref{thm:main1}, $P^U_t$ preserves the log-concavity of $\exp(-V)$. In other words, $-\log P^U_t(\exp(-V))$ is a convex function for all $t \geq 0$.
\end{thm}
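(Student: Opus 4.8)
The plan is to realize $-\log P^U_t(\exp(-V))$ as a solution to a parabolic PDE and then run a maximum-principle argument on its Hessian. Write $\phi_t := -\log P^U_t(\exp(-V))$, so that $\phi_0 = V$ is convex, and we want to propagate $D^2 \phi_t \geq 0$ forward in $t$. Differentiating the linear equation $\frac{d}{dt} P^U_t(\exp(-V)) = L P^U_t(\exp(-V))$ through the logarithm produces a nonlinear evolution for $\phi_t$ of the form $\frac{d}{dt}\phi_t = L\phi_t - |\nabla \phi_t|^2$ (the Hopf--Cole / Fokker--Planck dual equation). The key computation is then to differentiate this twice in space: setting $H_t := D^2 \phi_t$, one obtains a Riccati-type equation
\[
\frac{d}{dt} H_t = \Delta H_t - \scalar{\nabla U, \nabla H_t} - 2 (D^3 \phi_t)(\nabla\phi_t,\cdot,\cdot) - 2 H_t^2 - H_t D^2 U - D^2 U \, H_t + R_t ~,
\]
where $R_t$ is a remainder built out of $D^3 U$ contracted against $\nabla \phi_t$. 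The terms $-2H_t^2$ and $-(H_t D^2U + D^2 U H_t)$ are ``good'' at the boundary of the positive-semidefinite cone (they preserve $H_t \geq 0$ by the standard tensor maximum principle, since $D^2 U \geq 0$ by convexity of $U$), the drift and Laplacian terms are harmless, and the first-order term $-2(D^3\phi_t)(\nabla\phi_t,\cdot,\cdot)$ is a genuine first-order operator that also does not spoil the maximum principle. So the entire difficulty is concentrated in controlling the sign of $R_t$.

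The second, crucial step is therefore to exploit the symmetry hypothesis together with $\rho_i''' \leq 0$ to show that $R_t$ has the right sign when restricted to the relevant subspace of test vectors. By Theorem \ref{thm:main1}'s symmetry assumption, $V$ satisfies (\ref{eq:symmetry}) and $L$ commutes with the action of $O(E_1,\ldots,E_k)$, so $P^U_t(\exp(-V))$ and hence $\phi_t$ inherit the symmetry; this forces $\nabla \phi_t$ to lie, at each point, in a controlled position relative to the radial directions of the $E_i$ blocks. On $E_0$, $Q$ is quadratic so $D^3 U|_{E_0} = 0$ and there is no remainder at all (this is exactly Caffarelli's case). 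On each radial block $E_i$, one computes $D^3 U$ explicitly in terms of $\rho_i', \rho_i'', \rho_i'''$ and the radial/spherical frame; the symmetry of $\phi_t$ means $\nabla\phi_t$ restricted to $E_i$ is purely radial, and pairing $D^3 U$ with this radial gradient produces a multiple of $\rho_i''' \leq 0$ (plus curvature terms of the form $\rho_i'/r$ which combine favorably because $\rho_i$ is itself convex and increasing). This is the heart of the matter, and the place where all three hypotheses — convexity of $U$, $\rho_i''' \leq 0$, and the symmetry — are used simultaneously.

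The third step is to make the maximum-principle argument rigorous. Since $\Real^n$ is noncompact, I would first justify a priori bounds and sufficient decay of $\phi_t$ and its derivatives — using the smoothness assumption $U \in C^{3,\alpha}_{loc}$, the $C^{3,\alpha}$ approximation reduction mentioned in the remark, and parabolic regularity for (\ref{eq:semi-group}) — so that the tensor maximum principle of Weinberger/Hamilton type applies; alternatively one localizes with cutoffs and a barrier at infinity. Then: suppose $H_t$ first develops a zero eigenvalue at some $(t_0, x_0)$ in direction $\xi$; at that point $H_{t_0}(\xi,\xi) = 0$ is a spatial minimum of $v(x) := H_t(x)(\xi(x),\xi(x))$ (extending $\xi$ suitably), so $\nabla v = 0$, $\Delta v \geq 0$, the drift term vanishes, $-2H_{t_0}^2(\xi,\xi) \le 0$ and $-(H D^2U + D^2 U H)(\xi,\xi) = -2\langle D^2U\, \xi,\xi\rangle H_{t_0}(\xi,\xi)=0$ — so the evolution inequality forces $\frac{d}{dt} v|_{(t_0,x_0)} \geq - (\text{remainder}) \ge 0$ provided the remainder $R_{t_0}(\xi,\xi) \le 0$, which is exactly what Step 2 delivers. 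Hence $H_t$ cannot leave the cone $\{H \ge 0\}$, proving convexity of $\phi_t$ for all $t$. The main obstacle, as indicated, is Step 2: verifying that the symmetry of $\phi_t$ really does restrict $\nabla \phi_t$ enough to make the third-derivative remainder nonpositive — getting the frame computation and the combination of $\rho_i''',\rho_i'',\rho_i'$ terms to close requires care, and is presumably where the precise form of the hypothesis (\ref{eq:U}) was reverse-engineered from.
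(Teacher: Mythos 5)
Your overall strategy is the paper's: pass to the nonlinear equation $\partial_t\phi = \Delta\phi - \scalar{\nabla\phi,\nabla U} - |\nabla\phi|^2$, differentiate twice in a fixed direction, and observe that at a first zero of the smallest eigenvalue of $D^2\phi_t$ in direction $e$ all the Riccati, drift and first-order terms either vanish or have a favorable sign, leaving only $-(D^3U)(e,e,\nabla\phi_t)$ to control. But your Step 2 has a genuine gap. Symmetry alone only tells you that $Proj_{E_i}\nabla\phi_t(x)$ is a scalar multiple of $Proj_{E_i}x$; it says nothing about the \emph{sign} of that scalar, and $(D^3U)(e,e,\cdot)$ is linear (hence odd) in its last argument, so the sign is everything. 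The missing ingredient is the paper's Lemma \ref{lem:geom2}: the \emph{convexity of $\phi_t$ at the critical time} (available precisely because $t_0$ is the first time convexity fails) combined with the symmetry forces the radial component of the gradient on each $E_i$ to be nonnegative, via a reflection argument on sublevel sets. Without that sign your remainder $R_t(\xi,\xi)$ can go either way. Relatedly, your explanation of why the tangential ``curvature terms'' close is off: the computation in Lemma \ref{lem:geom1} needs $\rho_i''(r) \leq \rho_i'(r)/r$, which follows from $\rho_i'''\leq 0$ together with $\rho_i'(0)=0$ (i.e.\ concavity of $\rho_i'$), not from convexity and monotonicity of $\rho_i$.

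Your Step 3 also treats as routine what the paper identifies as the most delicate point of the whole argument: justifying that there \emph{is} a first time $t_0>0$ and an attained minimum point $(x_0,e)$ at which to apply the pointwise reasoning. The paper achieves this by reducing to a Dirichlet problem on a ball, perturbing to $\hat V = V + \eps\beta(t)|x|^2/2$ so that convexity is strict at $t=0$ and the error terms can be absorbed, and then --- because the compatibility conditions fail at the corner $\partial B(R)\times\set{0}$, so second derivatives are not uniformly continuous up to the boundary --- proving up-to-the-boundary regularity estimates (Propositions \ref{prop:regularity} and \ref{prop:boundary}) to rule out the minimum escaping to $\partial B(R)$. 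A Hamilton-type tensor maximum principle on all of $\Real^n$ would require comparable a priori control at infinity, which you do not supply. In short, the skeleton is correct, but the two places where the hypotheses actually do work --- the sign of the radial gradient and the attainment of the first bad point --- are exactly the places your sketch leaves open.
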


It should be noted that by a result of A. Kolesnikov \cite{KolesnikovPreservingLogConcavity} (see also \cite{LionsMusielaPreservingConvexityCRAS}, and compare with \cite{IshigeSalaniQuasiConcavityIsNotPreserved}), the only smooth linear diffusion processes (\ref{eq:semi-group}) with generator $L = A(x) \nabla^2 + b(x) \nabla$ which preserve the log-concavity of $\exp(-V)$ for \emph{arbitrary} convex functions $V$, are precisely the Ornstein-Uhlenbeck processes, given by a constant valued matrix $A$ and an affine map $b$ (for our generator (\ref{eq:L-def}), this corresponds to quadratic potentials $U=Q$). That the Ornstein-Uhlenbeck processes preserve log-concavity is well known, and may be easily seen using the Mehler formula and the Prekop\'a-Leindler Theorem (e.g. \cite{HargeGCCForEllipsoid}); together with our construction above, this already provides an alternative proof of Caffarelli's Contraction Theorem (with some other map $T$).
By \emph{restricting} to convex functions $V$ having certain symmetries, as in Theorem \ref{thm:main2}, we are able to show that log-concavity is preserved for generators with \emph{more general} potentials $U$.

The proof of Theorem \ref{thm:main2} is based on parabolic PDE methods and in particular the maximum principle (see \cite{KawohlBook,LeeVazquezParabolicNonlinearPDE,GrecoKawohl} and the references therein). Let us give a very heuristic outline of the proof. After assuming that $V$ is smooth enough and strictly convex, and restricting the problem onto a smooth, bounded and strictly convex domain by imposing zero Dirichlet boundary conditions, we proceed in the contrapositive. Assume that $V = V(x,t)$ does not remain strictly convex, and argue that there will be a first time $t_0 > 0$ when this fails; this step is the most delicate in all of the proof and requires very careful justification, a point that has been omitted in many previous works on concavity properties of solutions to parabolic PDE. The strict convexity of the boundary guarantees that the minimum of $D^2_{e,e}V(x,t_0)$ will be attained in an interior point $x_0$ and some direction $e$. Since this will be a local minimum, this implies on one hand that $(d/dt - \Delta)(D^2_{e,e}V)(x_0,t_0) \leq 0$. On the other hand, using that $D D_e V = 0$ and $D D^2_{e,e} V = 0$ at $(x_0,t_0)$, a calculation shows that:
\[
\left . \brac{(d/dt - \Delta)(D^2_{e,e}V)} \right |_{(x_0,t_0)} = - \left (D^3 U) \right|_{x_0} (e,e,\grad V(x_0,t_0)) ~.
\]
At time $t=t_0$, $V(\cdot,t)$ is still assumed to be convex, and our geometric structural and symmetry assumptions on $U$ and $V$ were precisely designed to guarantee that the latter expression be non-negative. Massaging this argument a little more, we obtain a contradiction, thereby concluding the proof.
We emphasize again that key to our approach
is an analysis at the very first time $t_0>0$ when things may go wrong - a triviality for the usual application of the maximum principle for a (uniformly continuous) function on a bounded parabolic domain, but a genuine issue when applied to its second derivatives, which may not be uniformly continuous up to the boundary.

\subsection{Applications}

Besides the applications provided in his original paper \cite{CaffarelliContraction}, Caffarelli's Contraction Theorem has found numerous applications in various fields, serving as a tool to transfer isoperimetric inequalities, obtaining correlation inequalities, and more (see e.g. \cite{CorderoMassTransportAndGaussianInqs,CFM-BConjecture,HargeCorrelationInqs,KlartagMarginalsOfInequalities}). Most of these applications only use the fact that there exists \emph{some} contracting map pushing forward one measure onto another, without employing the additional information that this map is the \emph{Brenier} map, i.e. the gradient of a convex function. Consequently, it is a mere exercise to repeat the corresponding proofs in our more general setting, replacing Caffarelli's Theorem with Theorem \ref{thm:main1}, and thereby extending these applications. We will not go through all of these in this work, but rather indicate several selected applications pertaining to correlation inequalities, extending in particular some known results regarding the Gaussian Correlation Conjecture (described in Section \ref{sec:apps}) to our setup, following an argument of Dario Cordero-Erausquin \cite{CorderoMassTransportAndGaussianInqs}. We will also briefly indicate how to obtain new isoperimetric inequalities.

\subsection{Afterthoughts}

After understanding how to extend Caffarelli's Contraction Theorem using our heat-induced flow and proving Theorem \ref{thm:main1}, we revisited Caffarelli's original argument from \cite{CaffarelliContraction}, and observed:
\begin{thm} \label{thm:main3}
Theorem \ref{thm:main1} is also valid when replacing $T$ with the Brenier optimal-transport map $T_{opt}$ pushing forward $\mu$ onto $\nu$.
\end{thm}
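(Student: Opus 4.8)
The plan is to re-run Caffarelli's original maximum-principle argument from \cite{CaffarelliContraction} for the Brenier potential $\varphi$ (so that $T_{opt}=\grad\varphi$), inserting the block structure of $U$ and the symmetry of $V$ exactly at the point where Caffarelli used that $U=Q$ is quadratic. First I would carry out the reductions: by the approximation scheme of Lemma \ref{lem:approx} together with the compactness argument of Section \ref{sec3} it is enough to treat the case in which $U$ and $V$ are smooth and \emph{strictly} convex, with $U$ still of the form (\ref{eq:U}) --- for instance one may replace $Q$ by $Q + \eps \abs{Proj_{E_0} x}^2$ and each $\rho_i(r)$ by $\rho_i(r) + \eps r^2$, which retains $\rho_i''' \le 0$ --- so that by Caffarelli's interior regularity theory for the Monge--Amp\`ere equation the potential $\varphi$ is as smooth as needed and $D^2\varphi > 0$. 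That $T_{opt}$ satisfies our symmetry assumptions is then automatic: $\mu$ and $\nu$ are invariant under $G := O(E_1,\ldots,E_k)$, hence for $R \in G$ the map $R \circ T_{opt} \circ R^{-1} = \grad(\varphi \circ R^{-1})$ is again the Brenier map pushing $\mu$ onto $\nu$; by uniqueness $T_{opt}$ commutes with $G$ and $\varphi$ is $G$-invariant, so in particular $Proj_{E_i} T_{opt}(x)$ is a nonnegative multiple of $Proj_{E_i} x$ for every $i$.

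The heart of the matter is the maximum-principle step. Writing the Monge--Amp\`ere equation logarithmically, $\log \det D^2\varphi(x) = (U+V)(\grad\varphi(x)) - U(x)$, I would differentiate it twice in a fixed unit direction and apply the maximum principle to $w(x) := \sscalar{D^2\varphi(x) e, e}$. Set $\lambda := \sup_{x,e} \sscalar{D^2\varphi(x)e,e}$ and suppose, for contradiction, that $\lambda > 1$ and that this value is attained at some $x_0$ with corresponding top eigenvector $e_0$; then, freezing $e_0$, the smooth function $x \mapsto \sscalar{D^2\varphi(x) e_0, e_0}$ still attains its maximum at $x_0$. Using $\grad w(x_0) = 0$, $D^2 w(x_0) \le 0$ and $D^2\varphi(x_0) e_0 = \lambda e_0$, the standard computation gives, at $x_0$,
\[
D^2 U(x_0)(e_0,e_0) \ \ge\ \lambda^2\, D^2(U+V)\brac{T_{opt}(x_0)}(e_0,e_0) \ +\ \mathrm{tr}\brac{(D^2\varphi(x_0))^{-1} N (D^2\varphi(x_0))^{-1} N} ,
\]
where $N := \partial_{e_0} D^2\varphi(x_0)$ is symmetric. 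Since the trace term is a sum of squares, hence $\ge 0$, and $D^2 V \ge 0$, this forces
\[
\lambda^2 \ \le\ D^2 U(x_0)(e_0,e_0) \, / \, D^2 U\brac{T_{opt}(x_0)}(e_0,e_0),
\]
the denominator being positive by strict convexity; so the desired contraction $\lambda \le 1$ is reduced to the single inequality
\[
(\star) \qquad D^2 U\brac{T_{opt}(x_0)}(e_0,e_0) \ \ge\ D^2 U(x_0)(e_0,e_0).
\]

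To prove $(\star)$ I would decompose $D^2 U$ along the orthogonal blocks. On $E_0$ the Hessian of $Q$ is the constant $2A$, so that component of $(\star)$ holds with equality --- this is precisely Caffarelli's original situation. On each $E_i$, because $Proj_{E_i} T_{opt}(x_0)$ is parallel to $Proj_{E_i} x_0$, the quadratic forms $D^2[\rho_i(\abs{\cdot})]$ at these two points, evaluated on $Proj_{E_i} e_0$, differ only through the radii $r_i := \abs{Proj_{E_i} x_0}$ and $r_i' := \abs{Proj_{E_i} T_{opt}(x_0)}$, via the scalars $\rho_i''(r)$ and $\rho_i'(r)/r$ --- both nonincreasing on $\Real_+$ (the first because $\rho_i''' \le 0$, the second also using $\rho_i'(0) = 0$, which is forced by $U \in C^1$). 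Hence $(\star)$ follows once we know the \emph{radial contraction} $r_i' \le r_i$ for every $i$. In the one-block radial case this is elementary: the radial marginals of $\mu$ and $\nu$ have likelihood ratio $\propto e^{-V}$, which is nonincreasing since the convex function $V$ is radially nondecreasing, so the $\nu$-marginal is stochastically dominated by the $\mu$-one and the monotone rearrangement between them takes $r$ to a point $\le r$. In general I would derive $\abs{Proj_{E_i} T_{opt}(x)} \le \abs{Proj_{E_i} x}$ from further results of Caffarelli on the monotonicity of optimal transportation between suitably ordered log-concave measures.

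Granting $(\star)$, the displayed estimate gives $\lambda \le 1$, i.e. $\norm{D^2\varphi} \le 1$, so $T_{opt} = \grad\varphi$ is a contraction; combined with the symmetry observed above, this proves Theorem \ref{thm:main3} for smooth strictly convex data, and the general case follows from the compactness argument of Section \ref{sec3}, since $1$-Lipschitz maps are preserved under the relevant limits. The main obstacle, exactly as in Theorem \ref{thm:main2}, is the legitimacy of the maximum-principle step on the noncompact domain $\Real^n$: one must justify that $\lambda$ is effectively attained at an interior point at which $D^2\varphi > 0$ --- the delicate ``first point where things may fail'' issue --- and, secondarily, that the reduction of $(\star)$ to the radial contraction (which rests on the symmetry forcing $Proj_{E_i} T_{opt}(x) \parallel Proj_{E_i} x$) is married correctly with the monotonicity input borrowed from Caffarelli.
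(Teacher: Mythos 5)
Your proposal follows essentially the same route as the paper's Section \ref{sec:revisit}: the same maximum-principle computation on the twice-differentiated Monge--Amp\`ere equation yielding $\lambda^2 \le D^2_{e,e}U(x_0)/D^2_{e,e}U(T_{opt}(x_0))$, the same use of the symmetries to force $Proj_{E_i}T_{opt}(x)$ parallel to $Proj_{E_i}x$ with nonnegative factor, and the same reduction of the remaining inequality to the radial contraction $|Proj_{E_i}T_{opt}(x)| \le |Proj_{E_i}x|$ (your direct monotonicity of $\rho_i''$ and $\rho_i'(r)/r$ is equivalent to the paper's mean-value-theorem argument via Lemma \ref{lem:geom1}). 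The one step you defer --- deriving the radial contraction from ``further results of Caffarelli'' --- is precisely the paper's Theorem \ref{thm:ingred} (\cite[Theorem 6]{CaffarelliContraction}), and note that invoking it is not free: its hypothesis (\ref{eq:ingred-cond}) must be verified for the potentials $U$ and $U+V$ on the orthant-type domains, which uses both the concavity of $\rho_l'$ with $\rho_l'(0)=0$ and the nonnegativity of the radial derivatives of $V$ established in Lemma \ref{lem:geom2}.
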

For the proof of Theorem \ref{thm:main3}, which is based on Caffarelli's own proof, we require an additional
ingredient from \cite{CaffarelliContraction} in the form of Theorem \ref{thm:ingred}, described in Section \ref{sec:revisit}. Roughly speaking, Caffarelli's argument is oblivious to the quadratic part of $U$, and for the non-quadratic part on $E_0^\perp$, reduces under our assumptions the task of showing that $T_{opt}$ is a contraction, to showing that it is a contraction \emph{with respect to the origin}. It is this latter property which is verified using Theorem \ref{thm:ingred}.

\medskip
In Section \ref{sec:comparing}, we compare between the two maps $T$ (as constructed in Subsection~\ref{SS:construction of T}) and $T_{opt}$. It is not hard to verify that the path $[0,\infty) \ni t \mapsto S_t$ of our interpolating diffeomorphisms does not coincide in general with the path $[0,1) \ni s \mapsto (1-s) Id + s S_{opt}$ of optimal interpolating maps, where $S_{opt} = T_{opt}^{-1}$ denotes the Brenier map pushing forward $\nu$ onto $\mu$. Indeed, our diffusion process may be seen as the gradient flow for the entropy functional $H(\nu_t | \mu)$ on the Wasserstein space $W_2(\Real^n)$ equipped with an appropriate Riemannian structure (Otto and Villani \cite{OttoVillaniHWI}, see also Jordan--Kinderlehrer--Otto \cite{JKO-FokkerPlanckAsGradientDescent}); optimal-transport, on the other hand, corresponds to moving along the geodesic between $\nu$ and $\mu$ in $W_2(\Real^n)$, i.e. gradient flow for the distance squared functional $W_2^2(\nu_t ,\mu)$.
Consequently, we believe that the limiting maps $T$ and $T_{opt}$ are in general different, although we have not been able to exclude the possibility that they coincide.
The assumptions of Theorem \ref{thm:main1} were precisely designed to ensure that $T$ contracts distances, but it is quite surprising that exactly the same assumptions imply (for seemingly different reasons!) the same for $T_{opt}$.

When comparing these two approaches, it is worth pointing out that our diffusion approach only relies on classical regularity results for linear parabolic PDEs, whereas analyzing the optimal-transport map requires Caffarelli's deeper regularity results for the fully-nonlinear Monge-Amp\`ere equation (see \cite{CaffarelliStrictlyConvexIsHolder,CaffarelliRegularity} and the references therein); consequently, the former approach may lend itself to further generalization, in particular to setups where the latter regularity results for the Brenier--McCann optimal-transport map are unavailable, or alternatively, known to be false, as in the Riemannian-manifold setting (see \cite{VillaniOldAndNew}).

\subsection{Organization}

The rest of this work is organized as follows. In Section \ref{sec2} we provide a complete proof of Theorem \ref{thm:main2}. In Section \ref{sec3}, we rigorously justify the proof of Theorem \ref{thm:main1} described above, providing the (few) missing details in the above construction. In Section \ref{sec:apps} we present some applications of Theorem \ref{thm:main1}. In Section \ref{sec:revisit}, we revisit Caffarelli's argument and provide an alternative proof of Theorem \ref{thm:main1} for the Brenier map $T_{opt}$ itself. Lastly, in Section \ref{sec:comparing}, we compare between the two
maps $T$ and $T_{opt}$, and conclude with some final remarks.

\medskip

\noindent \textbf{Acknowledgements.} We gratefully acknowledge the support of the Institute for Advanced Study, where this work was initiated,
and thank Jean Bourgain and Tom Spencer for their support and interest. We would also like to thank Cedric Villani for his interest and for providing several helpful references during this work, Almut Burchard, Bo'az Klartag and Robert McCann for their interest and illuminating remarks, and Dominic Dotterrer for remarks on terminology. We also thank Haim Brezis, Bob Jerrard, Ki-Ahm Lee, Alessandra Lunardi and Vladimir Maz'ya for their patient help with Proposition \ref{prop:regularity}, and Bernd Kawohl for additional references. Final thanks go out to the anonymous referees, for helpful suggestions which have improved the presentation of this work.

\section{Proof of Theorem \ref{thm:main2}} \label{sec2}

This section is dedicated to the proof of Theorem \ref{thm:main2}, from which Theorem \ref{thm:main1} easily follows, as explained in the Introduction, and rigorously verified in Section \ref{sec3}.
We begin by setting up the notation throughout the paper. Our basic reference is \cite{LadyParabolicBook}, even though our notation varies slightly from the notation used there. We will use $D$ and $\nabla$ interchangeably to denote the derivative operator in $\Real^n$.
Given an non-negative integer $k$, we denote by $C^{k}(\Sigma)$ the space of real-valued functions on $\Sigma \subset \Real^n$ with continuous derivatives $D^a f$, for every multi-index $a$ of order $|a| \leq k$, equipped with the usual maximum norm:
\[
\norm{f}_{C^{k}(\Sigma)} := \sum_{|a| \leq k} \sup_{x \in \Sigma} |D^{a} f(x)| ~.
\]
Similarly, the space $C^{k+\alpha}(\Sigma) = C^{k,\alpha}(\Sigma)$ denotes the subspace of functions whose $k$-th order derivatives are H\"{o}lder continuous of order $\alpha \in (0,1]$, equipped with the norm:
\[
\norm{f}_{C^{k,\alpha}(\Sigma)} := \norm{f}_{C^{k}(\Sigma)} + \sum_{|a| = k} \sup_{x \neq y \in \Sigma} \frac{|D^a f(x) - D^a f(y)|}{|x-y|^{\alpha}} ~.
\]
We will say that a continuous function is H\"{o}lder continuous of order 0, in which case $C^{k}(\Sigma)$ indeed coincides with $C^{k,0}(\Sigma)$.

When $\Sigma = \Omega \times \Theta$ is a product domain consisting of space $x \in \Omega$ and time $t \in \Theta$ components, we will denote by
$C^{k \times l}(\Omega \times \Theta)$ the space of real-valued functions $f$ with continuous (in $\Sigma$) space derivatives $D_x^a$ of order $|a| \leq k$ and time derivatives $D_t^s$ of order $s \leq l$, equipped with the norm:
\[
\norm{f}_{C^{k \times l}(\Omega \times \Theta)} := \sum_{{|a| \leq k}} \sup_{z \in \Sigma} |D^{a}_x f(z)| + \sum_{s=0}^{l} \sup_{z \in \Sigma} |D^{s}_t f(z)|
~.
\]
We will also denote by $C^{(\beta ; \beta/2)}(\Omega \times \Theta)$ the space of real-valued functions $f$ on $\Sigma$ such that for every integer $r,s \geq 0$ with $r + 2s \leq \beta$ and $|a|=r$, $D_x^{a} D_t^{s} f$ is H\"{o}lder continuous in $x$ of order $\min(\beta-(r+2s),1)$ and in $t$ of order $\min(\beta/2-(r/2 +s),1)$. The natural norm on this space is given by:
\begin{eqnarray*}
\norm{f}_{C^{(\beta ; \beta/2)}(\Omega \times \Theta)} & := & \sum_{r + 2s \leq  \lfloor \beta \rfloor} \sum_{|a|=r} \sup_{z \in \Sigma} |D_x^{a} D_t^{s} f(z)| + \\
& + & \sum_{r + 2s =  \lfloor \beta \rfloor} \sum_{|a|=r} \sup_{x_1 \neq x_2 \in \Omega , t \in \Theta} \frac{|D_x^{a} D_t^{s} f(x_1,t) - D_x^{a} D_t^{s} f(x_2,t)|}{|x_1 - x_2|^{\beta - (r+2s)}} \\
& + & \sum_{ \lfloor \beta \rfloor -1 \leq r + 2s \leq  \lfloor \beta \rfloor} \sum_{|a|=r}  \sup_{x \in \Omega , t_1 \neq t_2 \in \Theta} \frac{|D_x^{a} D_t^{s} f(x,t_1) - D_x^{a} D_t^{s} f(x,t_2)|}{|t_1 - t_2|^{\beta/2 - (r/2+s)}} ~.
\end{eqnarray*}
Lastly, we will denote by $W_p^{(2l;l)}(\Omega \times \Theta)$ for $p \in [1,\infty]$ and $l$ a non-negative integer, the space of functions $f$ on $\Omega \times \Theta$ so that for any integer $r,s \geq 0$ with $r + 2s \leq l$ and $|a|=r$, the distributional derivatives $D_x^{a} D_t^{s} f$ are in $L_p(\Omega \times \Theta)$ (this space is equipped with its usual Sobolev norm, which we will not require explicitly).

Finally, we let $F_{loc}(\Sigma)$ denote the space of functions belonging to $F(\Pi)$ for all compact subsets $\Pi$ of $\Sigma$.

\subsection{Reduction to smooth $V$} \label{subsec:smooth}

Let us start by summarizing several well-known properties of the semi-group $\set{P_t^U}_{t \geq 0}$. From the classical theory of parabolic equations, it follows that for each $t \geq 0$, $P_t^U$ acts linearly on the space $\B(\Real^n)$ of smooth bounded functions on $\Real^n$ to itself (indeed, there exists a unique solution of (\ref{eq:semi-group}) in the class of bounded functions), and hence is a semi-group $P^U_t \circ P^U_s = P^U_{t+s}$. Moreover, by the maximum principle, it follows that $\norm{P_t^U(f)}_{L_\infty} \leq \norm{f}_{L_\infty}$ and that $P_t^U(f) \geq 0$ for any $f \geq 0$ in $\B(\Real^n)$. Since $\int P_t^U(f) d\mu = \int f d\mu$, as easily checked by differentiating in $t$ and using (\ref{eq:semi-group}), it follows by interpolation that $\norm{P_t^U(f)}_{L_p(\mu)} \leq \norm{f}_{L_p(\mu)}$ for all $p \in [1,\infty]$. Consequently, the action of $P^U_t$ extends to all of the $L_p(\mu)$ spaces, clarifying the statement of Theorem \ref{thm:main2}.

It follows immediately that it is enough to prove Theorem \ref{thm:main2} for smooth functions $V$. Indeed, any convex function $V : \Real^n \rightarrow \Real \cup \set{+\infty}$ may be pointwise approximated from below by a non-decreasing sequence of smooth convex functions $V_m : \Real^n \rightarrow \Real$, which may be chosen to preserve any symmetry properties satisfied by $V$. In particular, $\exp(-V_m)$ tends to $\exp(-V)$ in $L_1(\mu)$, and so $V_m + c_m$ satisfy the assumptions of Theorem \ref{thm:main2}, where $c_m \rightarrow 0$ denote normalization constants ensuring that $\exp(-(V_m + c_m)) \mu$ are probability measures. Consequently $P_t^U(\exp(-V_m))$ tends to $P_t^U(\exp(-V))$ in $L_1(\mu)$, and since the sequence $P_t^U(\exp(-V_m))$ is pointwise non-increasing (using the positivity of $P_t^U$), it follows that there exists a pointwise limit which coincides with $P_t^U(\exp(-V))$ in $L_1(\mu)$. By assuming that Theorem \ref{thm:main2} holds for smooth functions, it follows that $P_t^U(\exp(-V_m))$ are log-concave:
\[
P_t^U(\exp(-V_m))\brac{\frac{x+y}{2}} \geq P_t^U(\exp(-V_m))(x)^{\frac{1}{2}} P_t^U(\exp(-V_m))(y)^{\frac{1}{2}} \;\;\; \forall x,y \in \Real^n ~,
\]
and this is clearly preserved under pointwise limit. The reduction to the case that $V$ is smooth is complete.

\subsection{Reduction to vanishing Dirichlet boundary conditions}

Let $B(R)$ denote the open Euclidean ball in $\Real^n$ of radius $R$ centered at the origin, and let $\chi : [0,1] \rightarrow [0,1]$ denote a smooth log-concave (non-increasing) function so that $\chi|_{[0,1)} > 0$, $\chi|_{[0,1/2]} \equiv 1$ and $\chi(1)=0$.

\begin{prop} \label{prop:Dirichlet}
Let $U \in C^{1,\alpha}_{loc}(\Real^n)$, $V\in C^{2,\alpha}_{loc}(\Real^n)$ and $\exp(-V) \in C^0(\Real^n)$. Assume that for any $R,T > 0$, the solution $f_R(x,t)$ to the parabolic equation:
\[
\frac{d}{dt} f_R = \Delta f_R - \scalar{\nabla f_R,\nabla U} ~,~  f_R(x,0) = \exp(-V(x)) \chi(|x|/R) ~ , ~  (x,t) \in B(R) \times [0,T] ~,
\]
with \emph{vanishing Dirichlet boundary conditions}:
\[
f|_{\partial B_R \times [0,T]} \equiv 0 ~,
\]
is spatially log-concave on $B(R)$ for any $t \in [0,T]$. Then the (unique) bounded solution $f(x,t)$ to the Cauchy problem:
\begin{equation} \label{eq:Cauchy}
\frac{d}{dt} f = \Delta f - \scalar{\nabla f,\nabla U} ~,~ f(x,0) = \exp(-V(x)) ~, ~ (x,t) \in \Real^n \times [0,\infty) ~,
\end{equation}
is also spatially log-concave on $\Real^n$ for any $t \geq 0$.
\end{prop}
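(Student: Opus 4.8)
The plan is to approximate the global Cauchy problem (\ref{eq:Cauchy}) by the Dirichlet problems on $B(R)$ and pass to the limit $R \to \infty$, using uniqueness and compactness to ensure that the limiting solution is the one we want. First I would fix $T > 0$ and, for each $R$, let $f_R$ denote the solution of the Dirichlet problem in the statement; by the standard theory of parabolic equations (our reference \cite{LadyParabolicBook}) this solution exists, is unique, and is smooth in the interior of $B(R) \times (0,T]$ given the regularity assumptions on $U$ and on the initial datum $\exp(-V) \chi(|x|/R)$. By hypothesis each $f_R(\cdot,t)$ is log-concave on $B(R)$. The cutoff $\chi$ was chosen log-concave precisely so that the initial datum $\exp(-V(x))\chi(|x|/R)$ is itself log-concave (product of two log-concave functions), which is what makes the Dirichlet hypothesis applicable — though strictly this is the hypothesis's concern, not ours.

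Next I would establish the convergence $f_R \to f$ as $R \to \infty$, where $f$ is the bounded solution of (\ref{eq:Cauchy}). The initial data $\exp(-V)\chi(|x|/R)$ increase pointwise to $\exp(-V)$ as $R \to \infty$, and by the maximum principle (positivity and $L_\infty$-contractivity of the Dirichlet heat-type semigroup, extended by zero outside $B(R)$), the $f_R$ form a pointwise non-decreasing family bounded above by $\norm{\exp(-V)}_{L_\infty}$; hence they have a pointwise limit $\tilde f$. To identify $\tilde f$ with $f$, I would use interior parabolic estimates: on any fixed compact $K \times [\delta,T] \subset \Real^n \times (0,T]$, for all large $R$ the $f_R$ are uniformly bounded in a Hölder (or $W_p^{(2;1)}$) norm depending only on $K$, $\delta$, $T$ and the local data of $U$; Arzelà–Ascoli then gives local uniform convergence of $f_R$ (and its first spatial derivatives) along a subsequence to a function solving the PDE in (\ref{eq:Cauchy}) on $\Real^n \times (0,T]$, with the correct initial trace $\exp(-V)$ (using monotone convergence of the initial data together with continuity). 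Since the bounded solution of the Cauchy problem is unique, $\tilde f = f$, and the whole family converges, not just a subsequence.

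Finally, log-concavity passes to the pointwise limit: writing the defining inequality
\[
f_R\Bigl(\frac{x+y}{2},t\Bigr) \geq f_R(x,t)^{1/2} f_R(y,t)^{1/2}
\]
for each $R$ and letting $R \to \infty$ yields the same inequality for $f$, for every $t \in (0,T]$; the case $t=0$ is immediate since $\exp(-V)$ is log-concave, and since $T$ was arbitrary this gives spatial log-concavity of $f$ on all of $\Real^n \times [0,\infty)$.

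The main obstacle I anticipate is the identification step: making sure that the monotone pointwise limit of the $f_R$ is genuinely the bounded Cauchy solution $f$ and not some other object (e.g. that no mass ``escapes to infinity'' and that the initial condition is attained in the right sense). This is handled by the combination of (i) uniqueness of the bounded solution of (\ref{eq:Cauchy}), which is available from the parabolic theory given $U \in C^{1,\alpha}_{loc}$, and (ii) interior regularity estimates that are uniform in $R$ on compact subsets, which upgrade pointwise monotone convergence to local uniform convergence of $f_R$ together with enough derivatives to pass to the limit in the equation. Everything else — the maximum principle giving monotonicity and the $L_\infty$ bound, and the stability of log-concavity under pointwise limits — is routine.
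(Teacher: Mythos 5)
Your proposal is correct and follows essentially the same route as the paper: uniform interior parabolic estimates on compact sets, Arzel\`a--Ascoli to extract a locally convergent limit, identification with the bounded Cauchy solution via uniqueness, and stability of log-concavity under pointwise limits. The additional monotonicity observation (that the $f_R$, extended by zero, increase in $R$) is harmless but not needed, since the compactness-plus-uniqueness argument already identifies the limit of the full family.
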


\begin{proof}
This follows from a standard argument, which we include for completeness.
Fix $T > 0$; we will show that $f(x,t)$ is log-concave on $\Real^n$ for any $t \in [0,T]$.
By the classical theory of parabolic PDEs (e.g. \cite[Chapter IV, Theorem 10.1]{LadyParabolicBook}), for any $0 < r < r' < R$, we have the following (spatial) interior Schauder-type estimate:
\[
\norm{f_R}_{C^{(2+\alpha ; 1+\alpha/2)}(B(r)\times[0,T])} \leq C_1 \norm{f_R(\cdot,0)}_{C^{2+\alpha}(B(r'))} + C_2 \norm{f_R}_{C^0(B(r') \times [0,T])} ~,
\]
where the constants $C_1,C_2>0$ above depend only on $n,T,\norm{\nabla U}_{C^{0,\alpha}(B(r'))},r,r',\alpha$. By the maximum principle, $\norm{f_R}_{C^0(B(r') \times [0,T])} \leq \norm{\exp(-V)}_{C^0(\Real^n)} < \infty$. And if we assume that $R \geq 1$, since $\chi$ is smooth it follows that $\norm{f_R(\cdot,0)}_{C^{2,\alpha}(B(r'))} \leq C_3 \norm{\exp(-V)}_{C^{2,\alpha}(B(r'))} < \infty$ for some constant $C_3>0$. We conclude that:
\[
\forall r > 0, \;\;\; \exists \, C_r > 0 \;\text{ such that } \; \forall R \geq r + 1, \;\;\; \norm{f_R}_{C^{(2 + \alpha; 1+\alpha/2)}(B(r)\times[0,T])} < C_r ~.
\]
It follows by Arzel\`a--Ascoli compactness that given $r > 0$, we may extract a sequence of $R_m \geq r + 1$ increasing to infinity, so that $f_{R_m}$ converges in $C^{2\times1}(B(r)\times[0,T])$. Applying a standard diagonalization argument, we conclude that there exists a sequence $\set{R_{k}}$ increasing to infinity, so that $f_{R_k}$ converges in $C^{2\times 1}_{loc}(\Real^n\times[0,T])$ to some $f_\infty \in C^{(2+\alpha; 1+\alpha/2)}_{loc}(\Real^n \times [0,T])$ (which is in addition clearly bounded). It follows that $f_\infty$ satisfies (\ref{eq:Cauchy}) on $\Real^n \times [0,T]$, so by the well-known uniqueness of this equation in the class of bounded functions, we deduce that $f_\infty \equiv f$ on $\Real^n \times [0,T]$. But $f_\infty(\cdot,t)$ is clearly log-concave for any $t \in [0,T]$, just from being the pointwise limit of the log-concave functions $f_{R_k}(\cdot,t)$. This concludes the proof.
\end{proof}

Let $V \in C^{4,\alpha}_{loc}(\Real^n)$ satisfy the assumptions of Theorem \ref{thm:main2}. If we define $V_R\in C^{4,\alpha}(B(R))$ by setting $\exp(-V_R) = \exp(-V(x)) \chi(|x|/R)$ on $B(R)$, we note that the symmetry assumptions of Theorem \ref{thm:main2} remain in tact for $V_R$ on $B(R)$. By Subsection \ref{subsec:smooth} and Proposition \ref{prop:Dirichlet}, Theorem \ref{thm:main2} consequently reduces to the following:

\begin{thm} \label{thm:max}
Let $U$ be as in Theorem \ref{thm:main1} and let $f_0 \in C^{4,\alpha}(\overline{B(R)})$ denote a positive function on $B(R)$ vanishing on $\partial B(R)$. Assume that on $B(R)$, $f_0 = \exp(-V_0)$, with $V_0$ convex and satisfying our symmetry assumptions (\ref{eq:symmetry}). Then for every $T>0$, the unique solution $f$ to the following parabolic equation on $B(R) \times [0,T]$:
\begin{equation} \label{eq:Dirichlet-PDE}
\frac{d}{dt} f = \Delta f - \scalar{\nabla f,\nabla U} ~,~  f|_{t=0} = f_0 ~ , ~  f|_{\partial B_R \times [0,T]} \equiv 0 ~,
\end{equation}
is spatially log-concave, i.e. $f = \exp(-V)$ with $V(\cdot,t)$ convex on $B(R)$ for every $t \in [0,T]$.
\end{thm}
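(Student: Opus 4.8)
The plan is to work with $V:=-\log f$, establishing that $D^2 V(\cdot,t)\geq 0$ on $B(R)$ for every $t\in[0,T]$. Wherever $f>0$, the equation (\ref{eq:Dirichlet-PDE}) becomes
\[
\partial_t V=\Delta V-|\nabla V|^2-\scalar{\nabla V,\nabla U}~.
\]
Since $L=\Delta-\scalar{\nabla,\nabla U}$, the ball $B(R)$, the boundary condition and $f_0$ are all invariant under $O(E_1,\ldots,E_k)$, uniqueness for (\ref{eq:Dirichlet-PDE}) forces $f(\cdot,t)$, hence $V(\cdot,t)$, to satisfy our symmetry assumptions (\ref{eq:symmetry}) for all $t$; by the strong maximum principle $f>0$ on $B(R)\times(0,T]$, so $V$ is well defined and, by parabolic Schauder theory, has the spatial and temporal derivatives used below. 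First I would reduce to the case $D^2 V_0\geq\delta_0\,\mathrm{Id}$ for some $\delta_0>0$: replacing $f_0$ by $f_0\exp(-\eps|x|^2)$ keeps it in $C^{4,\alpha}(\overline{B(R)})$, keeps it vanishing on $\partial B(R)$, replaces $V_0$ by the still-convex, still-symmetric $V_0+\eps|x|^2$, and the corresponding solution converges to $f$; since spatial log-concavity is closed under pointwise limits, the general case follows by letting $\eps\to0$.

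Next, differentiate the equation for $V$ twice in a fixed direction $e$. Writing $V_e:=\scalar{\nabla V,e}$, $V_{ee}:=D^2_{e,e}V$, $U_e:=\scalar{\nabla U,e}$, one obtains
\[
(\partial_t-\Delta)V_{ee}=-2|\nabla V_e|^2-\scalar{2\nabla V+\nabla U,\nabla V_{ee}}-2\scalar{\nabla V_e,\nabla U_e}-\scalar{\nabla V,\nabla U_{ee}}~.
\]
Set $m(t):=\inf_{x\in B(R)}\lambda_{\min}(D^2 V(x,t))$, and suppose $(x_1,t_1)$ with $t_1\in(0,T]$ is a spatial minimum of $x\mapsto\lambda_{\min}(D^2 V(x,t_1))$, with minimizing unit direction $e$. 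Comparing $\lambda_{\min}(D^2V)$ with the smooth $V_{ee}$, which touches it from above there, gives at $(x_1,t_1)$: $\nabla V_{ee}=0$, $\Delta V_{ee}\geq0$, and $\nabla V_e=D^2V\cdot e=m(t_1)\,e$. Using that $Q$ is quadratic (so $D^3 Q\equiv0$ and $\nabla U_{ee}$ only picks up the $\rho_i$-terms), $\scalar{\nabla V,\nabla U_{ee}}=(D^3 U)(e,e,\nabla V)$, $\scalar{\nabla V_e,\nabla U_e}=m(t_1)(D^2 U)(e,e)$ and $|\nabla V_e|^2=m(t_1)^2$, the identity collapses at $(x_1,t_1)$ to
\[
(\partial_t-\Delta)V_{ee}\big|_{(x_1,t_1)}=-2m(t_1)^2-2m(t_1)(D^2 U)(e,e)-(D^3 U)_{x_1}(e,e,\nabla V(x_1,t_1))~.
\]

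The heart of the proof is to show the last term is $\leq 0$ whenever $m(t_1)\geq0$. By the symmetry of $V(\cdot,t_1)$ one has $Proj_{E_i}\nabla V(x_1,t_1)=c_i\,Proj_{E_i}x_1$, and convexity of $V(\cdot,t_1)$ forces $c_i\geq0$ (examine the tangential block of the Hessian when $\dim E_i\geq2$, or use evenness when $\dim E_i=1$). On the other hand, for $g(y)=\rho(|y|)$, with $r=|y|$, $u=y/r$ and any fixed vector $v$, a short radial-derivative computation gives
\[
(D^3 g)(v,v,u)=\rho'''(r)\scalar{u,v}^2+\frac{r\rho''(r)-\rho'(r)}{r^2}\brac{|v|^2-\scalar{u,v}^2}~;
\]
here $|v|^2-\scalar{u,v}^2\geq0$ (Cauchy--Schwarz, $|u|=1$), $\rho'''\leq0$ by (\ref{eq:U}), $\rho',\rho''\geq0$ by convexity of $U$, and $r\rho''-\rho'\leq0$ since $\psi(r):=r\rho''(r)-\rho'(r)$ satisfies $\psi(0)=-\rho'(0)\leq0$ and $\psi'(r)=r\rho'''(r)\leq0$; hence $(D^3 g_i)(v,v,u_i)\leq0$ for every $v$. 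Applying this in each block with $Proj_{E_i}\nabla V(x_1,t_1)=c_i r_i u_i$, $c_i\geq0$, and summing yields $(D^3 U)_{x_1}(e,e,\nabla V(x_1,t_1))\leq0$. With the a priori bounds $M_1:=\snorm{D^2 V}_{L_\infty(B(R)\times[0,T])}$ and $M_2:=\snorm{D^2 U}_{L_\infty(B(R))}$, the collapsed identity gives $(\partial_t-\Delta)V_{ee}|_{(x_1,t_1)}\geq -C\,m(t_1)$ with $C:=2(M_1+M_2)$, and since $\Delta V_{ee}\geq0$ there, $\partial_t V_{ee}|_{(x_1,t_1)}\geq -C\,m(t_1)$. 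Interpreting this (via a standard Dini-derivative argument, $m(t)$ being the infimum of functions that are $C^1$ in $t$ over a compact set) as $D^+ m(t)\geq -C\,m(t)$ while $m\geq0$, and using $m(0)\geq\delta_0$, a Gr\"onwall/continuation argument yields $m(t)\geq\delta_0 e^{-Ct}>0$ for all $t\in[0,T]$; thus $V(\cdot,t)$ is convex, and $\eps\to0$ from the first paragraph finishes the proof.

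The main obstacle is everything I silently assumed about regularity: one needs $V=-\log f$ and $D^2 V,\ \partial_t D^2 V$ continuous up to $t=0$ (using compatibility of the $C^{4,\alpha}$ data), and — much more delicately — enough control near the lateral boundary $\partial B(R)\times(0,T]$, where $f\to0$ and $V\to+\infty$, to guarantee that $\inf_x\lambda_{\min}(D^2 V(x,t))$ is attained at an interior point and depends continuously on $t$, so that the ``first time things go wrong'' is a legitimate object and the computation above may be run at a genuine interior spatial minimum. This rests on a barrier comparison in a collar of $\partial B(R)$: there $V$ behaves like $-\log(\mathrm{dist}(\cdot,\partial B(R)))$ plus lower-order terms, whose Hessian is positive-definite near the sphere precisely because the sphere has positive curvature — this is where strict convexity of the domain enters. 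Making this rigorous, when the relevant second derivatives need not be uniformly continuous up to the boundary, is exactly the step flagged as the most delicate in the introduction, and is where the dedicated parabolic regularity estimates (Proposition \ref{prop:regularity}) are needed.
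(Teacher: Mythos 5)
Your proposal follows essentially the same strategy as the paper: a maximum-principle argument applied directly to the second derivatives of $V=-\log f$, with the contradiction/Gr\"onwall step hinging on the sign of $(D^3U)|_{x}(e,e,\nabla V)$, which you establish exactly as in the paper's Lemmas \ref{lem:geom1} and \ref{lem:geom2} (your derivation of $c_i\geq 0$ from the tangential Hessian block, rather than the paper's sublevel-set/reflection argument, is a valid minor variant). The one organizational difference is that the paper perturbs $V$ for \emph{all} times to $\hat V=V+\eps\beta(t)|x|^2/2$ with $\beta$ growing exponentially and derives a contradiction at the first time strict convexity fails, whereas you perturb only the initial data and run a Dini-derivative/Gr\"onwall argument on $m(t)=\inf_x\lambda_{\min}(D^2V(x,t))$. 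These are interchangeable, but both require the same hard input: that the infimum is attained at an \emph{interior} point varying in a fixed compact subset, so that $m$ is continuous and the computation at $(x_1,t_1)$ is legitimate. You correctly identify this as the crux and correctly sketch the mechanism (near $\partial B(R)$, $D^2V$ is forced positive by the uniform gradient lower bound in the normal direction and the curvature of the sphere in the tangential directions), but you do not prove it; this is precisely the content of the paper's Propositions \ref{prop:regularity} and \ref{prop:boundary}, which occupy most of the proof.

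Two concrete technical slips. First, to run the boundary analysis one needs $|\nabla f_0|>0$ on $\partial B(R)$ (so that Hopf's lemma plus continuity of $\nabla f$ gives $|\nabla f|\geq c>0$ on a collar for all $t\in[0,T]$, including small $t$); the paper secures this by a preliminary approximation of $f_0$, and your perturbation $f_0\mapsto f_0\exp(-\eps|x|^2)$ does not provide it, so that reduction is missing from your setup. Second, $M_1:=\snorm{D^2V}_{L_\infty(B(R)\times[0,T])}$ is infinite: the Hessian of $V$ blows up like $d^{-2}$ at the lateral boundary. This is harmless because in the Gr\"onwall step the quadratic term is $-2m(t_1)^2$, which is controlled by $m(t_1)$ itself once one restricts to the regime $0\leq m\leq 1$ (the only regime where convexity is in danger), but the argument should be phrased that way rather than via a global sup of $D^2V$.
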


This reduction step is similar to the one in \cite{DemangePhdThesis}, referenced to us by Cedric Villani, whom we would like to thank.

\subsection{Log-Concavity away from the boundary}

We proceed to provide a proof of Theorem \ref{thm:max}, modulo some very delicate details which are postponed to the next subsection. As in many previous works on concavity/convexity properties of solutions to elliptic and parabolic PDEs (\cite{LewisConvexRings,KorevaarClassicalPaper,Kennington2,CaffarelliSpruckConvexityProperties,KawohlBook,DiazKawohl,LeeVazquezParabolicNonlinearPDE}),
our approach is based on the maximum principle for the second derivative (or its finite difference analogue); other approaches may be found e.g. in \cite{BrascampLiebPLandLambda1,Borell1982QuasiconcavityOfBrownianMotionHittingTime,
JapaneseConvexityPreservingSingularParabolicPDE,
AlvarezLasryLions,ColesantiSalaniQuasiconcaveEnvelope,
BianGuanConstantRank} and the references therein, or in the classical book by B. Kawohl \cite{KawohlBook}.
We clarify some of the difficulties which arise in showing log-concavity in the parabolic case and which were omitted in some of these previous works. Another challenge we encounter, is that the condition our parabolic equation must satisfy, so that we can deduce the log-concavity of the solution, in fact assumes that the solution is already log-concave. Hence, arguing in the contrapositive, we must perform our analysis at precisely the \emph{first} time when things go wrong, which again requires some delicate justification.
To this end, we avoid using the usual convexity function, introduced by Korevaar \cite{KorevaarClassicalPaper} and employed by many others (see the previously mentioned references or \cite{KawohlBook,LeeVazquezParabolicNonlinearPDE,GrecoKawohl} and the references therein), and work directly with the second derivatives.

\begin{proof}[Proof of Theorem \ref{thm:max}]
By approximating $f_0$ appropriately and arguing as in Subsection \ref{subsec:smooth}, we may assume that:
\begin{equation} \label{eq:slope}
\min_{x \in \partial B(R)} |\nabla f_0|(x) > 0 ~;
\end{equation}
the only difference is that now, due to the boundary conditions, $\norm{f(\cdot,t)}_{L_1(\mu|_{B(R)})}$ will not be preserved, but rather decrease, with time. See also \cite[Lemma 6.1]{GrecoKawohl}, where a similar preliminary step was employed.

Fix $T>0$. Since $f_0 \in C^{4,\alpha}(\overline{B(R)})$ and in addition every component of $\nabla U$ is in $C^{2,\alpha}(\overline{B(R)})$, it follows from the classical Schauder theory of parabolic PDEs (e.g. \cite[Chapter IV,Theorem 10.1]{LadyParabolicBook}) that $f \in C_{loc}^{(4+\alpha;2+\alpha/2)}(B(R) \times [0,T])$ (i.e. $f \in C^{(4+\alpha;2+\alpha/2)}(K \times [0,T])$ for every compact subset $K \subset B(R)$), and also that $f \in C^{(4+\alpha;2+\alpha/2)}(\overline{B(R)} \times [\eps,T])$, for any $0<\eps < T$.
A crucial point to note is that the latter smoothness of the solution does not extend all the way to the entire boundary $\partial B(R) \times [0,T]$, since our assumption (\ref{eq:slope}) contradicts (in general) the compatibility which is usually assumed between the spatial derivatives of $f_0$ and the time derivatives of our Dirichlet conditions (see Subsection \ref{subsec:boundary}). This difficulty seems unavoidable using this approach,
and addressing it requires careful justification of subsequent steps, something which has been omitted in previous works.

It also follows from the strong maximum principle (and our initial conditions) that $f>0$ on $B(R) \times [0,T]$, and hence $V \in C^{(4+\alpha;2+\alpha/2)}_{loc}(B(R) \times [0,T])$. One immediately checks that $V$ satisfies the following non-linear parabolic PDE on $B(R) \times [0,T]$:
\[
\frac{d}{dt} V = \Delta V - \scalar{\grad V, \grad U} - \scalar{\grad V, \grad V} ~.
\]

Let $\eps > 0$ and define $\hat{V}\in C^{(4+\alpha;2+\alpha/2)}_{loc}(B(R) \times [0,T])$ as:
\[
\hat{V}(x,t) := V(x,t) + \eps \beta(t) \frac{|x|^2}{2} ~,
\]
where $\beta : [0,T] \rightarrow \Real_+$ denotes a suitable strictly positive smooth function to be determined later on. We claim that for all small enough $\eps > 0$, $\hat{V}(\cdot,t)$ must remain strictly convex for all $t \in [0,T]$, and taking the limit as $\eps \rightarrow 0$, we will conclude that $V(\cdot,t)$ is itself convex, as required.

Assume in the contrapositive that this is not so. Let $t_0 \in [0,T]$ denote the infimum over all times $t$ when $\hat{V}(\cdot,t)$ is not strictly convex, so that there exists a sequence $(x_m,t_m,e_m) \in B(R) \times (0,T] \times S^{n-1}$ converging to $(x_0 , t_0 ,e) \in \overline{B(R)} \times [0,T] \times S^{n-1}$ and satisfying $D^2_{e_m,e_m} \hat{V}(x_m,t_m) \leq 0$ (here $S^{n-1}$ denotes the unit sphere in $\Real^n$, identified with the unit sphere in the tangent spaces $T_{x_m}\Real^n$).

The most delicate part of the proof will be presented in Proposition \ref{prop:boundary} in the next subsection, where it will be shown that some further regularity estimates of $f$ up to the boundary, together with (\ref{eq:slope}) and the strict convexity of $\partial B(R)$, imply that necessarily $x_0 \notin \partial B(R)$. It follows by continuity of the second derivative of $\hat{V}$ in $B(R) \times [0,T]$ and the minimality of $t_0$ that $D^2_{e,e} \hat{V}(x_0,t_0) = 0$, and therefore $t_0 > 0$ (since at time $t=0$, $\hat{V}(\cdot,t)$ is clearly strictly convex). Moreover,
$x_0 \in B(R)$ is a local minimum point, and hence:
\begin{equation} \label{eq:max1}
D D^2_{e,e} \hat{V} (x_0,t_0) = 0 ~, ~ \Delta D^2_{e,e} \hat{V} (x_0,t_0) \geq 0 ~,~ \frac{d}{dt} D^2_{e,e}\hat{V} (x_0,t_0) \leq 0 ~,
\end{equation}
where $D$ denotes the space derivative. Since $0$ is the minimum value for the function $e \rightarrow D^2_{e,e} \hat{V}(x_0,t_0)$, it follows that it must be an eigenvalue of $D^2 \hat{V}(x_0,t_0)$, and that $e$ is a corresponding eigenvector:
\begin{equation} \label{eq:max2}
D D_e \hat{V}(x_0,t_0) = D^2 \hat{V}(x_0,t_0) e = 0 ~ , ~ \text{and hence } ~  D D_e V(x_0,t_0) = - \eps \beta(t_0) e ~.
\end{equation}
Using (\ref{eq:max1}), we must have at $(x_0,t_0)$:
\begin{equation} \label{eq:max3}
(d/dt - \Delta)(D^2_{e,e} \hat{V}) \leq 0 ~.
\end{equation}
We will show that under our assumptions on $U$ and the definition of $t_0$, the latter value must be strictly positive, obtaining the desired contradiction and concluding the proof. Indeed, at a general point $(x,t)$:
\begin{eqnarray*}
& & (d/dt - \Delta)(D^2_{e,e} \hat{V}) = D^2_{e,e}((d/dt - \Delta)(\hat{V})) \\
&=& D^2_{e,e}(\eps \beta'(t) |x|^2/2 - n \eps \beta(t) - \scalar{\grad V,\grad U} - \scalar{\grad V,\grad V}) = \eps \beta'(t) - \sscalar{D D^2_{e,e} V , DU} \\
& & - 2 \scalar{D D_e V, D D_e U} - \scalar{DV , D D^2_{e,e} U} -
2 \scalar{D D^2_{e,e} V , DV} - 2 \scalar{D D_e V, D D_e V} ~.
\end{eqnarray*}
At $(x_0,t_0)$, using $(\ref{eq:max1})$ and $(\ref{eq:max2})$, we see that:
\begin{eqnarray*}
 (d/dt - \Delta)(D^2_{e,e} \hat{V})(x_0,t_0) = \eps \beta'(t_0)  + 2 \eps \beta(t_0) D^2_{e,e} U - 2 \eps^2 \beta(t_0)^2 - \scalar{DV , D D^2_{e,e}
 U} \\
=  \eps \beta'(t_0)  + 2 \eps \beta(t_0) D^2_{e,e} U - 2 \eps^2 \beta(t_0)^2 + \eps \beta(t_0) \scalar{x, D D^2_{e,e} U} - \sscalar{D\hat{V} , D D^2_{e,e} U}  \\
\geq \eps \beta'(t_0) - (2 \eps \beta(t_0) M_2 + 2 \eps^2 \beta(t_0)^2 + \eps \beta(t_0) R M_3) - D^3 U(e,e,D \hat{V}) ~,
\end{eqnarray*}
where $M_2 := \sup_{x \in B(R), \xi \in S^{n-1}} D^2_{\xi,\xi} U(x)$ and $M_3 := \sup_{x \in B(R), \xi \in S^{n-1}} |(D^3 U)|_{x}(\xi,\xi,\frac{x}{|x|})|$.

Note that by the definitions of $t_0$ and $x_0$, $D^2_{\xi,\xi}\hat{V}(x,t_0) \geq D^2_{e,e}\hat{V}(x_0,t_0) = 0$, so $\hat{V}(\cdot,t)$ is still convex on $B(R)$ at time $t = t_0$. Also note that since $U$, $f_0$ (and $B(R)$) are all invariant under the action of $O(E_1,\ldots,E_k)$, and since the Laplace operator commutes with the entire orthogonal group, it follows easily that $f \circ G$ is also a solution to (\ref{eq:Dirichlet-PDE}) for any $G \in O(E_1,\ldots,E_k)$. The uniqueness of the solution implies that $f(\cdot,t)$ (and hence $V(\cdot,t)$ and $\hat{V}(\cdot,t)$) are also invariant under the action of this subgroup, and hence satisfy our symmetry assumptions for all $t \geq 0$. We will see in Proposition \ref{prop:geom} below that for any convex function $F : \Real^n \rightarrow \Real$ satisfying our symmetry assumptions, the condition on $U$ implies that $(D^3 U)|_{x}(\xi,\xi,D F(x)) \leq 0$ for any $x \in \Real^n$ and $\xi \in S^{n-1}$. Therefore, in order to arrive to a contradiction with (\ref{eq:max3}), it is enough to show that for small enough $\eps>0$ and an appropriate choice of $\beta$, we have:
\[
\beta'(t_0) - (2 \beta(t_0) M_2 + 2 \eps \beta(t_0)^2 + \beta(t_0) R M_3) > 0 ~.
\]
Indeed, this is satisfied on $[0,T]$ by setting $\beta(t) := \exp((2M_2 + R M_3 + 1) t)$ and letting $\eps < 1/(2\beta(T))$. This completes the contradiction and concludes the proof, modulo Propositions \ref{prop:geom} and \ref{prop:boundary} below.
\end{proof}

We conclude this subsection with the proof of the following proposition, which is the only place where we use our structural assumptions on $U$ and $V$. In fact, the assumption that $U$ is convex may be omitted in all instances below (see Section \ref{sec:comparing} for more on this).

\begin{prop} \label{prop:geom}
If $U$ and $V$ satisfy the assumptions of Theorem \ref{thm:main1} then:
\[
(D^3 U)|_{x}(\xi,\xi,\grad V(x)) \leq 0 \;\;\;  \forall x \in \Real^n ~ \forall \xi \in S^{n-1} ~.
\]
\end{prop}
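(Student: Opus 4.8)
The plan is to compute $D^3 U$ explicitly using the block structure of $U$ in \eqref{eq:U}, exploit the symmetry assumptions on $V$ to control $\nabla V$, and reduce everything to the one-dimensional hypothesis $\rho_i''' \leq 0$. First I would observe that since $U$ decomposes as $U(x) = Q(\mathrm{Proj}_{E_0} x) + \sum_{i=1}^k \rho_i(|\mathrm{Proj}_{E_i} x|)$, its third derivative is block-diagonal in a strong sense: the $Q$-part on $E_0$ is quadratic and hence contributes nothing to $D^3 U$, and the cross-terms between distinct blocks vanish. So for any $x$, any unit vector $\xi$, and any vector $w$, one has $(D^3 U)|_x(\xi,\xi,w) = \sum_{i=1}^k (D^3 g_i)|_x(\xi,\xi,w)$ where $g_i(x) := \rho_i(|\mathrm{Proj}_{E_i} x|)$. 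It therefore suffices to prove the inequality for a single radial block, i.e. to show $(D^3 g)|_x(\xi,\xi,\nabla V(x)) \leq 0$ when $g(x) = \rho(|x|)$ on a Euclidean space $E$ (with $\rho''' \leq 0$ on $\Real_+$) and $V$ is convex and radial in this block.

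The heart of the matter is the second step: a direct computation of $D^3 g$ for $g(x) = \rho(|x|)$, $x \neq 0$. Writing $r = |x|$ and $\hat{x} = x/r$, one gets $\nabla g = \rho'(r) \hat{x}$ and then, differentiating twice more and collecting terms, $(D^3 g)|_x(\xi,\xi,w)$ is a combination of $\rho'''(r)$, $\rho''(r)$, $\rho'(r)$ with coefficients built from $\scalar{\xi,\hat{x}}$, $|\xi|^2 = 1$, $\scalar{w,\hat{x}}$ and $\scalar{\xi, P_{\hat{x}^\perp} w}$, where $P_{\hat{x}^\perp}$ is orthogonal projection onto $\hat{x}^\perp$. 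The key structural fact I would use to simplify this is that when $V$ satisfies the symmetry assumption \eqref{eq:symmetry}, $V$ restricted to this block is a radial function of $\mathrm{Proj}_{E_i} x$, so $\nabla V(x)$, projected to $E_i$, is a non-negative multiple of $\hat{x}$ — here convexity of $V$ (more precisely, that a convex radial function has non-decreasing radial derivative, which is $\geq 0$ if the function attains its minimum at the center; one must handle the general convex case by noting $\Phi'$ is non-decreasing so the relevant terms still have a sign, or reduce to this after a translation-free argument) forces the radial derivative to point outward. Plugging $w = \nabla V(x)$ with $\scalar{w,\hat{x}} \geq 0$ and $P_{\hat{x}^\perp} w = 0$ into the formula collapses it to something of the form $\scalar{w,\hat{x}} \big( a \,\rho'''(r) + b\, (\text{something involving } \rho'', \rho') \big)$, and a careful bookkeeping shows the bracket is exactly $\rho'''(r) \scalar{\xi,\hat{x}}^2$ plus terms that cancel, or more precisely one should verify that the only surviving contribution is $\leq 0$ by the hypothesis $\rho''' \leq 0$ together with $\scalar{w,\hat{x}} \geq 0$.

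I expect the main obstacle to be the bookkeeping in this computation together with the careful treatment of two degenerate cases: the point $x = 0$ (or more precisely $\mathrm{Proj}_{E_i} x = 0$), where $\rho(|\cdot|)$ need only be $C^{3,\alpha}$ and the naive radial formula is singular — here one must argue by continuity and the fact that $g$ is genuinely $C^3$ so all derivatives are finite and the inequality passes to the limit; and the direction $\xi$ having a component outside $E_i$, which simply contributes nothing extra since the $E_i$-block of $D^3 U$ only sees $\mathrm{Proj}_{E_i}\xi$. Once the single-block inequality is established with attention to these edge cases, summing over $i = 1,\ldots,k$ and recalling that the $Q$-contribution vanishes completes the proof.
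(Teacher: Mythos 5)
Your overall strategy coincides with the paper's: the quadratic part and all cross-block terms of $D^3U$ vanish, $\nabla V$ projects onto each $E_i$ as a non-negative multiple of $\mathrm{Proj}_{E_i}x$ (the paper establishes the sign via a reflection/supporting-hyperplane argument on the sublevel sets of $V$; your ``a convex function depending only on $|x_i|$ is non-decreasing in $|x_i|$'' is the same fact, though your parenthetical justification is muddled), and the whole proposition reduces to a single radial block. The gap is in that block computation, which is the only place the hypothesis $\rho'''\le 0$ actually enters. Writing $\xi^r$ and $\xi^t$ for the components of $\mathrm{Proj}_{E_i}\xi$ parallel and orthogonal to $x_i$, and $w=a\,x_i$ with $a\ge 0$, the computation gives
\[
(D^3 g)\big|_{x_i}(\xi,\xi,w)=\Bigl(\rho'''(|x_i|)\,|\xi^r|^2+\bigl(\rho''(|x_i|)-\tfrac{\rho'(|x_i|)}{|x_i|}\bigr)\tfrac{|\xi^t|^2}{|x_i|}\Bigr)\,a\,|x_i| .
\]
The tangential term does \emph{not} cancel, so your first alternative (``exactly $\rho'''(r)\langle\xi,\hat x\rangle^2$ plus terms that cancel'') is false, and your fallback (``the only surviving contribution is $\le 0$ by the hypothesis $\rho'''\le 0$'') is not immediate: $\rho'''\le 0$ says nothing pointwise about the sign of $\rho''(r)-\rho'(r)/r$. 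What makes this term non-positive is the integrated form of the hypothesis: since $t\mapsto\rho(|t|)$ is $C^3$ one has $\rho'(0)=0$, and concavity of $\rho'$ on $\Real_+$ then yields $\rho''(r)\le\bigl(\rho'(r)-\rho'(0)\bigr)/r=\rho'(r)/r$. Without identifying this term and supplying this observation the proof does not close. (Also, your degenerate case $x_i=0$ is handled more simply than by a limiting argument: there $\mathrm{Proj}_{E_i}\nabla V(x)=0$ by the reflection symmetry, so the $i$-th summand vanishes outright.)
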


The proposition follows immediately from the following two lemmata, which we formulate separately for later use:

\begin{lem} \label{lem:geom1}
Let $U$ satisfy the assumptions of Theorem \ref{thm:main1}. Then $(D^3 U)|_{x}(\xi,\xi,\theta) \leq 0$, for any $x \in \Real^n$, $\xi \in S^{n-1}$ and $\theta \in S^{n-1}$ such that:
\begin{equation} \label{eq:theta-cond}
\forall i=1,\ldots,k \;\;\; \exists a_i \geq 0 \;\;\; \text{so that} \;\;\; Proj_{E_i} \theta = a_i Proj_{E_i} x ~.
\end{equation}
\end{lem}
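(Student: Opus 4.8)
The plan is to reduce to a one–variable statement about each $\rho_i$ by exploiting the block structure of $U$, and then verify that statement by differentiating the Hessian of a radial function along the radial direction.

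First, write $U_i(y):=\rho_i(|y|)$ for $y\in E_i$ ($i\ge 1$) and $U_0(y):=Q(y)$ for $y\in E_0$, so that $U(x)=\sum_{i=0}^k U_i(Proj_{E_i}x)$. Since each $U_i$ is precomposed with the linear projection $Proj_{E_i}$ and $D^3U_0=D^3Q=0$, the trilinear form $D^3U$ is block–diagonal:
\[
(D^3U)|_x(\xi,\xi,\theta)=\sum_{i=1}^k (D^3U_i)|_{x_i}(\xi_i,\xi_i,\theta_i),\qquad x_i:=Proj_{E_i}x,\ \ \xi_i:=Proj_{E_i}\xi,\ \ \theta_i:=Proj_{E_i}\theta .
\]
In particular $Proj_{E_0}\theta$ is irrelevant, which is exactly why hypothesis (\ref{eq:theta-cond}) only constrains $Proj_{E_i}\theta$ for $i\ge1$. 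Using $\theta_i=a_i x_i$ with $a_i\ge 0$, and noting that if $x_i=0$ then $\theta_i=0$ so that the $i$-th term vanishes by linearity in its last slot, it suffices to prove
\[
(D^3U_i)|_{y}(\xi_i,\xi_i,y)\le 0\qquad\text{for every }i\ge1,\ y=x_i\in E_i\setminus\{0\},\ \xi_i\in E_i .
\]

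Second, I would compute this by differentiating the Hessian of the radial function $U_i$ along the ray $t\mapsto ty$. From the standard formula $D^2U_i(y)=\rho_i''(|y|)\,u\otimes u+\tfrac{\rho_i'(|y|)}{|y|}(Id_{E_i}-u\otimes u)$ with $u:=y/|y|$, writing $r:=|y|$ and $c:=\langle u,\xi_i\rangle^2$, and using that $u$ is constant along the ray, one gets
\[
g(t):=(D^2U_i)|_{ty}(\xi_i,\xi_i)=\rho_i''(tr)\,c+\frac{\rho_i'(tr)}{tr}\,(|\xi_i|^2-c),
\]
so that $(D^3U_i)|_{y}(\xi_i,\xi_i,y)=g'(1)=\rho_i'''(r)\,r\,c+\dfrac{\rho_i''(r)\,r-\rho_i'(r)}{r}\,(|\xi_i|^2-c)$. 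Since $|u|=1$, Cauchy--Schwarz gives $0\le c\le|\xi_i|^2$, so both $c$ and $|\xi_i|^2-c$ are $\ge0$; the first summand is $\le0$ because $\rho_i'''\le0$, and everything reduces to the scalar inequality $\rho_i''(r)\,r\le\rho_i'(r)$ for $r>0$.

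This last inequality is the only delicate point, and it is where the smoothness hypothesis enters beyond $\rho_i'''\le0$: restricting $U$ to a line through the origin in a direction of $E_i$ shows that $t\mapsto\rho_i(|t|)$ is $C^1$ at $0$, which forces $\rho_i'(0^+)=0$; since $\rho_i'''\le0$ means $\rho_i''$ is non-increasing on $\Real_+$, we get $\rho_i'(r)=\int_0^r\rho_i''(s)\,ds\ge\int_0^r\rho_i''(r)\,ds=r\,\rho_i''(r)$. Hence $g'(1)\le0$, so $(D^3U_i)|_{x_i}(\xi_i,\xi_i,\theta_i)=a_i\,g'(1)\le0$ for each $i$, and summing over $i$ yields $(D^3U)|_x(\xi,\xi,\theta)\le0$. (Note that convexity of $U$, i.e.\ $\rho_i''\ge0$ and $\rho_i'\ge0$, is never used, consistent with the remark preceding the statement.) The main obstacle is thus essentially bookkeeping — correctly organizing the block decomposition and isolating the scalar inequality $\rho_i''\,r\le\rho_i'$, which itself hinges on the boundary value $\rho_i'(0)=0$.
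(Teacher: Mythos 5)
Your proof is correct and follows essentially the same route as the paper's: the block decomposition killing the quadratic part, the reduction via $\theta_i=a_ix_i$ (with the $x_i=0$ case handled by linearity), the explicit radial/tangential splitting of $D^3(\rho_i(|\cdot|))$, and the key scalar inequality $r\,\rho_i''(r)\le\rho_i'(r)$ deduced from $\rho_i'''\le0$ together with $\rho_i'(0)=0$. The only cosmetic differences are that you derive the formula by differentiating the Hessian along the ray rather than quoting the "elementary calculation," and you phrase the scalar step via monotonicity of $\rho_i''$ instead of concavity of $\rho_i'$; these are equivalent.
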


\begin{lem} \label{lem:geom2}
Let $V$ satisfy the assumptions of Theorem \ref{thm:main1}. Then for any $x \in \Real^n$, $\theta = \nabla V(x)$ satisfies (\ref{eq:theta-cond}).
\end{lem}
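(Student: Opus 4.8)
The plan is to use the $O(E_1,\ldots,E_k)$-invariance of $V$ to force $Proj_{E_i}\nabla V(x)$ to be parallel to $Proj_{E_i} x$, and then to use the convexity of $V$ (together with the reflection in $E_i$) to pin down the sign. I would fix $i \in \{1,\ldots,k\}$ and $x \in \Real^n$, write $w := Proj_{E_i} x$, and first dispose of the degenerate case $w = 0$: letting $R_i$ be the element of $O(E_1,\ldots,E_k)$ acting as $-\mathrm{Id}$ on $E_i$ and as the identity on $E_0$ and on every $E_j$ with $j \neq i$, we have $R_i x = x$, so differentiating $V \circ R_i = V$ at $x$ yields $R_i^* \nabla V(x) = \nabla V(x)$; since $R_i^*$ negates the $E_i$-component, $Proj_{E_i}\nabla V(x) = 0$ and (\ref{eq:theta-cond}) holds with $a_i = 0$.

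When $w \neq 0$, I would run the same differentiation trick with rotations: for every $g \in O(E_i)$ fixing $w$, extended by the identity on $E_0$ and on the other $E_j$'s, one has $g x = x$, hence $\nabla V(x)$ — and so $Proj_{E_i}\nabla V(x)$ — is fixed by $g^* = g^{-1}$, which is again such a rotation. The subspace of $E_i$ fixed by all rotations of $E_i$ fixing $w$ is exactly $\Real w$ (trivial when $\dim E_i = 1$, and the usual description of the fixed space of $O(w^\perp \cap E_i)$ when $\dim E_i \geq 2$), so $Proj_{E_i}\nabla V(x) = a_i w$ for some $a_i \in \Real$.

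It then remains to check $a_i \geq 0$, and here is where convexity enters. I would look at the convex one-variable function $h(t) := V(x + tw)$: the $E_i$-component of $x + tw$ is $(1+t)w$ and every other projection equals that of $x$, so applying the reflection of $E_i$ across $w^\perp \cap E_i$ (extended by the identity, again an element of $O(E_1,\ldots,E_k)$) gives $h(t) = h(-2-t)$, in particular $h(0) = h(-2)$. Since $h$ is convex with $h'(0) = \scalar{\nabla V(x),w} = a_i |w|^2$, monotonicity of difference quotients gives $a_i|w|^2 = h'(0) \geq \frac{h(0)-h(-2)}{2} = 0$, hence $a_i \geq 0$.

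I do not expect a real obstacle: once the right symmetry is invoked the lemma is essentially bookkeeping, and the only slightly delicate point is the sign, handled by the identity $h(0) = h(-2)$. An alternative route is to write $V(x) = \Phi(Proj_{E_0}x,|Proj_{E_1}x|,\ldots,|Proj_{E_k}x|)$ explicitly, compute $Proj_{E_i}\nabla V(x) = (\partial_{r_i}\Phi/|w|)\,w$ by the chain rule, and derive $\partial_{r_i}\Phi \geq 0$ from the convexity of $V$ restricted to a line through $E_i$; I would favor the coordinate-free version above, since it needs no regularity of $\Phi$. (If $V$ is only convex, the same argument applies verbatim to any subgradient; in the application of the lemma $V$ is smooth anyway.)
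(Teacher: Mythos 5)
Your proposal is correct and takes essentially the same route as the paper: the invariance under the stabilizer of $x$ forces $Proj_{E_i}\nabla V(x)$ into $\Real \cdot Proj_{E_i}x$, and the sign comes from the first-order convexity inequality at $x$ evaluated at the reflected point $x - 2\,Proj_{E_i}x$. The paper packages that last step geometrically (the sublevel-set slice contains the product of balls, hence $\scalar{\nabla V(x), R_i(x)-x}\le 0$), while you package it as monotonicity of difference quotients for $h(t)=V(x+tw)$ using $h(0)=h(-2)$ --- these are the same inequality.
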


\begin{proof}[Proof of Lemma \ref{lem:geom1}]
Let $\varrho_i : E_i \rightarrow \Real$ be given by $\varrho_i(x) = \rho_i(|x|)$, $i=1,\ldots,k$.
Taking the third derivative of $U$, the quadratic term in (\ref{eq:U}) disappears and we are left with:
\[
 \left . (D^3 U) \right |_{x}(\xi,\xi,\theta) = \sum_{i=1}^k \left . (D^3_{E_i} \varrho_i) \right |_{Proj_{E_i} x}(Proj_{E_i} \xi,Proj_{E_i} \xi,Proj_{E_i} \theta) ~.
\]
Let us show that each summand is non-positive. Denote:
\[
 x_i := Proj_{E_i} x ~,~ \xi_i := Proj_{E_i} \xi ~,~ \xi_i^r := Proj_{x_i} \xi_i ~,~ \xi_i^t := Proj_{x_i^\perp} \xi_i  ~,~ \theta_i := Proj_{E_i} \theta ~.
\]
If $x_i = 0$ then $\theta_i = 0$ and hence the $i$-th summand is also $0$, so we may assume that $x_i \neq 0$.
Using (\ref{eq:theta-cond}), an elementary calculation yields:
\[
 \left . (D^3_{E_i} \varrho_i) \right |_{x_i} (\xi_i,\xi_i,\theta_i) = \brac{ \rho_i'''(|x_i|) |\xi_i^r|^2 + \brac{\rho_i''(|x_i|) - \frac{\rho_i'(|x_i|)}{|x_i|}} \frac{|\xi_i^t|^2}{|x_i|} } a_i |x_i| ~.
\]
Since $t \mapsto \rho_i(|t|)$ is a $C^3$ function, we see that $\rho_i'(0) = 0$. Since $\rho'''_i \leq 0$ on $\Real_+$, meaning that $\rho'_i$ is concave there, we deduce that also $\rho_i''(t) \leq (\rho_i'(t)-\rho_i'(0))/t = \rho_i'(t)/t$ for all $t > 0$. This implies that the term in brackets on the rand-hand side above is non-positive, and since $a_i \geq 0$, the entire expression is non-positive as well, as claimed.
\end{proof}

\begin{proof}[Proof of Lemma \ref{lem:geom2}]
Denote as usual $x_i = Proj_{E_i} x$, $i=0,1,\ldots,k$. Let us verify (\ref{eq:theta-cond}) for each $i=1,\ldots,k$.
It is easy to see that the symmetries of $V$ ensure that $D_i V(x) := Proj_{E_i} \nabla V(x)$ lies in the one-dimensional subspace spanned by $x_i$.
Hence if $x_i = 0$, then $D_i V(x) =0$ and (\ref{eq:theta-cond}) is satisfied trivially for that $i$, so we may assume otherwise. Denoting:
\[
D_i V(x) =: D_i^r V(x) \frac{x_i}{|x_i|} ~,
\]
it remains to verify that $D_i^r V(x) \geq 0$ when $x_i \neq 0$.
The symmetries of $V$ together with its convexity together imply that the following (convex) slice of $V$'s sub-level set at $x$:
\[
A(x):=\set{z \in E_0^\perp ;  V(x_0 + z) \leq V(x)} ~,
\]
contains the product set $B_{E_1}(|x_1|) \times \ldots \times B_{E_k}(|x_k|)$, where $B_{E_i}(r)$ denotes the Euclidean ball of radius $r$ in $E_i$. Geometrically, this means that the latter product set lies entirely on one side of the tangent plane to $A(x)$ at $Proj_{E_0^\perp} x$, or more precisely, that:
\[
 \scalar{Proj_{E_0^\perp} \grad V(x) , R(x) - x} \leq 0 \;\;\; \forall R \in O(E_1,\ldots,E_k) ~.
\]
Recalling that $Proj_{E_0^\perp} \grad V(x) = \sum_{i=1}^k D_i^r V(x) x_i$ and choosing $R_i \in O(E_1,\ldots,E_k)$ to be the reflection in $E_i$, defined by $R_i(x) = x - 2 x_i$, we conclude that:
\[
 D_i^r V(x) |x_i|^2 \geq 0 \;\;\; \forall i=1,\ldots,k ~.
\]
Since we assumed that $x_i \neq 0$, it follows that $D_i^r V(x) \geq 0$, as required.
\end{proof}

\subsection{Log-Concavity near the boundary} \label{subsec:boundary}

To complete the proof of Theorem \ref{thm:max}, we must show that $x_0 \notin \partial B(R)$.
Recalling the definition of $x_0$, this clearly follows from:
\begin{prop}\label{prop:boundary}
$D^2 V(x,t) \geq 0$ in a neighborhood of $\partial B(R) \times [0,T]$.
\end{prop}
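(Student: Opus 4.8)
The plan is to exploit the blow-up of $V=-\log f$ near $\partial B(R)$ via the elementary identity $f^2\,D^2 V=-f\,D^2 f+\nabla f\otimes\nabla f$. Fixing $(x,t)$ with $x$ near $\partial B(R)$, writing $\nu=\nabla f/|\nabla f|$, and decomposing a unit vector $\xi=a\nu+\eta$ with $\eta\perp\nu$ and $a^2+|\eta|^2=1$, one obtains
\[
f^2\,D^2_{\xi\xi}V=-f\,D^2_{\eta\eta}f-2af\,D^2_{\nu\eta}f+a^2\brac{|\nabla f|^2-f\,D^2_{\nu\nu}f}~.
\]
I would then reduce the proposition to the following estimates, holding uniformly on some neighborhood of $\partial B(R)\times[0,T]$: (a) $|\nabla f|\geq c_0>0$; (b) $|D^2 f|\leq C$ (near the parabolic corner, a version scaled to respect the boundary-layer behavior $f\sim R-|x|$); and (c) $D^2_{\eta\eta}f\leq 0$ for every $\eta\perp\nabla f$. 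Granting these, a completing-the-square estimate in $a$ shows $D^2_{\xi\xi}V\geq 0$ once $f$ is small enough: the term $a^2|\nabla f|^2/f^2$ dominates in the near-normal directions, while (c) handles the near-tangential ones. Since (a)--(b) make $f$ comparable to $R-|x|$ near the boundary, ``$f$ small'' is the same as ``$x$ close to $\partial B(R)$'', and we conclude that $D^2 V\geq 0$ on a full neighborhood of $\partial B(R)\times[0,T]$, as required.

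For times $t$ bounded away from $0$, these estimates are essentially immediate. The Schauder regularity recorded earlier gives $f\in C^{(4+\alpha;2+\alpha/2)}(\overline{B(R)}\times[\eps,T])$, so $D^2 f$ is bounded and continuous up to the boundary there, which is (b). Hopf's lemma and the strong maximum principle give $|\nabla f|>0$ on $\partial B(R)$ for each $t\geq\eps$, and $|\nabla f_0|>0$ on $\partial B(R)$ is exactly our preliminary normalization (\ref{eq:slope}); compactness of $[\eps,T]$ then yields the uniform lower bound (a). Estimate (c) is the geometric core: by (a), (b) and the implicit function theorem the spatial level sets $\{f(\cdot,t)=c\}$ are, for small $c$, $C^2$ hypersurfaces converging as $c\to0$ to the sphere $\partial B(R)$ in $C^2$; a hypersurface $C^2$-close to the uniformly convex $\partial B(R)$ is itself uniformly convex, and uniform convexity of $\{f=c\}$, viewed as the boundary of the superlevel set $\{f\geq c\}$ with outward unit normal $-\nabla f/|\nabla f|$, is precisely the inequality $D^2_{\eta\eta}f\leq -\delta_0|\eta|^2<0$ for tangential $\eta$ near $\partial B(R)$. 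Note that no structural assumption on $U$ or $V$ enters here; only the convexity of the ball and the positive slope (\ref{eq:slope}) of $f_0$ are used.

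The genuine difficulty is carrying (a), (b), (c) uniformly down to $t=0$, i.e.\ up to the parabolic corner $\partial B(R)\times\{0\}$, where the choice (\ref{eq:slope}) violates the first-order compatibility condition and $f$ ceases to be $C^2$ up to the boundary, so naive continuity from $f(\cdot,0)=f_0$ is unavailable. To handle this I would construct explicit barriers near $\partial B(R)$: since $f_0$ vanishes simply with $|\nabla f_0|$ bounded above and below on $\partial B(R)$, the maximum principle traps $f$ between $c_-(R-|x|)$ and $c_+(R-|x|)$ on a parabolic neighborhood of $\partial B(R)\times[0,T]$, and, combined with interior Schauder estimates rescaled at distance $\sim R-|x|$ from the boundary, this gives (a) and the scaled form of (b); for (c) one argues that the spatial superlevel sets $\{f(\cdot,t)\geq c\}$ remain pinched between two uniformly convex bodies shrinking to $B(R)$ as $c\to0$, forcing uniform convexity of their boundaries, or alternatively re-runs the analysis on the time-regularized problems and passes to the limit, using the convexity of $V_0$ and the interior $C^2$ estimates, which do hold up to $t=0$. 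I expect this corner analysis --- to be carried out below --- to be the main obstacle; it is exactly the delicate point that, as noted above, has been glossed over in earlier treatments of concavity for parabolic equations.
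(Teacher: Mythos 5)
Your skeleton is the right one and matches the paper's: the identity $f^2 D^2 V = -fD^2f + \nabla f\otimes\nabla f$, a normal/tangential splitting, the three inputs (gradient bounded below, $fD^2f$ small, uniform tangential concavity of $f$), and a completing-the-square combination. Away from the corner your argument is correct. But the corner analysis, which you rightly identify as the crux, contains genuine gaps. First, the claim that the superlevel sets $\set{f(\cdot,t)\geq c}$ being ``pinched between two uniformly convex bodies'' forces uniform convexity of their boundaries is false: a hypersurface squeezed between two nearby balls can oscillate and need not be convex at all, let alone satisfy a quantitative lower bound on its second fundamental form. What is actually needed for (c) is uniform continuity, up to the closed corner $\partial B(R)\times\set{0}$, of the second derivatives of $f$ in directions involving a tangential vector; this is precisely part (3) of Proposition \ref{prop:regularity}, and it is not free --- it is obtained by flattening the boundary in polar coordinates so that the differentiated equation for $D_\tau f$ carries no uncontrolled mixed second derivatives in its source term. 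Your alternative (``re-run on regularized problems and pass to the limit'') does not obviously produce a bound that is uniform in the regularization parameter, which is the whole issue.

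Second, your estimate (b) is quantitatively too weak. Rescaled interior Schauder at distance $d=R-|x|$, combined with the barrier $f\lesssim d$, yields only $|D^2f|\lesssim d^{-1}$, hence $f|D^2f|=O(1)$ with an uncontrolled constant. That is insufficient in the normal direction, where you need $-fD^2_{\nu\nu}f+|\nabla f|^2\geq c^2/2$, i.e.\ $f|D^2f|=o(1)$; and it is fatal for the cross term, where $|2af\,D^2_{\nu\eta}f|$ must be $O(|a||\eta|\,d)$ in order to be absorbed by the tangential good term, which is itself only of order $|\eta|^2 d$. The paper closes these gaps with two further inputs you omit: the $C^{(1+\beta;(1+\beta)/2)}$ regularity of $f$ up to the closed corner despite the failure of the compatibility conditions (Proposition \ref{prop:regularity} (1), via Lunardi's optimal regularity or $W^{(2;1)}_p$ theory plus embedding), which also supplies the uniform lower bound (a) on $|\nabla f|$ at interior points near the corner; and the interpolation between this $C^{1,\beta}$ bound and the interior $C^{2,\gamma}$ blow-up rate, giving $\sup_t\norm{f(\cdot,t)}_{C^2(\overline{B(R-\lambda)})}\leq C_\eps\lambda^{-\eps}$ and hence $f|D^2f|\lesssim d^{1-\eps}\to 0$. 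Without these three regularity statements your reduction cannot be completed at the corner.
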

The proof of Proposition \ref{prop:boundary} will be given at the end of this section, but first we explain the subtle regularity issue one is required to address here. Recall that the classical theory guarantees that under the assumptions of Theorem \ref{thm:max}, $f \in C_{loc}^{(4+\alpha;2+\alpha/2)}(B(R) \times [0,T])$ (i.e. $f \in C^{(4+\alpha;2+\alpha/2)}(K \times [0,T])$ for every compact subset $K \subset B(R)$), and also that $f \in C^{(4+\alpha;2+\alpha/2)}(\overline{B(R)} \times [\eps,T])$, for any $0<\eps < T$. However, the latter smoothness does not extend all the way to the ``corner'' $\partial B(R) \times \set{0}$, since in general we cannot guarantee the necessary and sufficient compatibility conditions:
\begin{equation} \label{eq:compat}
L^i(f_0)|_{\partial B(R)} \equiv 0  \;\;\;\;\; i=1,2
\end{equation}
(here $L^i$ denotes the iterated application of the operator $L$). This prevents a straightforward application of standard arguments for deducing Proposition \ref{prop:boundary}, and so we consequently need to obtain some delicate regularity estimates \emph{up to the boundary} for the solution $f$ to (\ref{eq:Dirichlet-PDE}), which are given in Proposition~\ref{prop:regularity} below.

\medskip

To outline the proof and properly motivate Proposition~\ref{prop:regularity}, observe that:
\[
D^2 V = -D^2 \log f = \frac{ - f D^2 f + \nabla f \otimes \nabla f}{f^2 } ~.
\]
Using Hopf's maximum principle and continuity of $\nabla f$ (see Proposition~\ref{prop:regularity} (1)) we see below that $\nabla f$ is bounded uniformly away from zero near $\partial B(R) \times [0, T]$. Therefore, the term $\nabla f \otimes \nabla f$ is uniformly positive definite when restricted to the normal direction (relative to $\partial B(R)$). In addition, the gradient bound implies that $f$ decays linearly to $0$ near the boundary, and one can show that $-fD^2 f$ decays uniformly to zero near $\partial B(R) \times [0,T]$ (see Proposition~\ref{prop:regularity} (2)). It follows that $D^2 V$ restricted to the normal direction is uniformly positive definite near $\partial B(R) \times [0,T]$.  On the other hand, since $\partial B(R)$ is the zero level set of $f$, the uniform convexity of $\partial B(R)$ and the uniform lower bound on $|\nabla f|$ together imply that $-D^2 f$ restricted to the tangential directions is uniformly positive definite along $\partial B(R) \times [0, T]$. Since the tangential second derivatives of $f$ are uniformly continuous (see Proposition~\ref{prop:regularity} (3)), it follows that $D^2 V$ restricted to the tangential directions is uniformly positive definite in a neighborhood of $\partial B(R) \times [0,T]$. Mixed derivatives are controlled similarly.

We now proceed with providing the precise details. We begin with:

\begin{prop} \label{prop:regularity}
Under the assumptions of Theorem \ref{thm:max}:
\begin{enumerate}
\item $f \in C^{(1+\beta; (1+\beta)/2)}(\overline{B(R)} \times [0,T])$ for all $\beta \in (0,1)$.
\item For any $\eps > 0$ there exists a $C_\eps > 0$ so that for any $\lambda \in (0,R)$:
\begin{equation} \label{eq:weak-C^2}
\sup_{t \in [0,T]} \norm{f(\cdot,t)}_{C^{2}(\overline{B(R-\lambda)})} \leq \frac{C_\eps}{\lambda^{\eps}} ~.
\end{equation}
\item If $n \geq 2$, the spatial derivatives of $f$ in the non-radial directions are $C^{1,\delta}(\overline{B(R)})$ uniformly in  $t \in [0,T]$, for any $\delta \in (0,1)$. In other words, for any $\delta \in (0,1)$ there exists a finite constant $C_\delta>0$, so that for any smooth unit vector field $\xi$ on $\overline{B(R)}$
 such that $\scalar{\xi(x),x}\equiv 0$:
\[
\sup_{t \in [0,T]} \norm{\scalar{\grad f(\cdot,t),\xi(\cdot)}}_{C^{1,\delta}(\overline{B(R)})} \leq C_\delta ~;
\]
(in fact, we actually have $\norm{\scalar{\grad f,\xi}}_{C^{(1+\delta;(1+\delta)/2)}(\overline{B(R)}\times[0,T])} \leq C_\delta$ ).
\end{enumerate}
\end{prop}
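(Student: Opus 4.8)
The plan is to obtain all three estimates from classical linear parabolic theory applied to $f$ and to its derivatives, taking careful account of the fact that compatibility conditions \eqref{eq:compat} may fail at the corner $\partial B(R) \times \{0\}$. Throughout I would write the equation as $f_t = Lf$ with $L = \Delta - \scalar{\nabla,\nabla U}$, and recall from the proof of Theorem \ref{thm:max} that $f \in C^{(4+\alpha;2+\alpha/2)}_{loc}(B(R)\times[0,T])$ and $f \in C^{(4+\alpha;2+\alpha/2)}(\overline{B(R)}\times[\eps,T])$ for every $\eps>0$, with $f_0 \in C^{4,\alpha}(\overline{B(R)})$ and the coefficients of $L$ in $C^{2,\alpha}(\overline{B(R)})$.

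For part (1): even though \eqref{eq:compat} may fail, the zeroth-order compatibility condition $f_0|_{\partial B(R)} \equiv 0$ does hold, so the solution is continuous up to $\overline{B(R)}\times[0,T]$. To get $C^{1+\beta}$ regularity up to the boundary, I would note that $f_0 \in C^{4,\alpha} \subset C^{1,\beta}$ and that the Dirichlet data ($\equiv 0$) is trivially smooth; the relevant compatibility for obtaining a solution in the parabolic H\"older class $C^{(1+\beta;(1+\beta)/2)}(\overline{B(R)}\times[0,T])$ requires only the matching of order $\lfloor (1+\beta)/2\rfloor = 0$ data at the corner, namely $f_0|_{\partial B(R)} = 0$, which holds. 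So part (1) follows from the standard Schauder/Solonnikov theory for linear parabolic equations with non-compatible-at-higher-order data (e.g. \cite[Ch. IV]{LadyParabolicBook}), applied to $f$ directly. This step should be essentially a citation.

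For part (2): here I would differentiate. The spatial first derivatives $\partial_j f$ solve a linear parabolic equation (obtained by differentiating $f_t = Lf$, producing lower-order terms with $C^{1,\alpha}$ coefficients coming from $\nabla^2 U$), and by part (1) they are bounded and H\"older on $\overline{B(R)}\times[0,T]$. To bound $\|f(\cdot,t)\|_{C^2(\overline{B(R-\lambda)})}$ I would apply the interior-in-space Schauder estimate on the ball $B(R-\lambda/2)$ for the equation satisfied by $\partial_j f$, just as in the proof of Proposition \ref{prop:Dirichlet}: this gives control of $\partial_i\partial_j f$ on $B(R-\lambda)$ by a constant depending on $\lambda$ (the distance to $\partial B(R)$) and on $\|\nabla f\|_{C^{0}}$, $\|\nabla U\|_{C^{1,\alpha}}$, which blows up polynomially in $1/\lambda$; tracking the dependence of the Schauder constant on $\lambda$ gives a bound of the form $C_N \lambda^{-N}$ for some fixed $N$, and in particular the claimed $C_\eps \lambda^{-\eps}$ once one interpolates (or simply notes any such polynomial bound suffices for the application — but since the statement asks for $\lambda^{-\eps}$, one interpolates between the $C^{1+\beta}$ bound of part (1), which is $\lambda$-independent, and an a priori $C^{2,\alpha}$ interior bound with polynomial blow-up, choosing the interpolation parameter to make the exponent as small as desired). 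I expect this is the most delicate of the three, because it requires being quantitative about how the interior parabolic Schauder constant degenerates as the subdomain approaches the lateral boundary; one clean way is: by part (1), $\nabla f$ is $C^{\beta}$ up to the boundary with a fixed norm, and then $\partial_j f$, restricted to $B(R-\lambda/2)\times[0,T]$, satisfies a parabolic equation to which one applies the interior estimate, scaling the spatial variable by $\lambda$ to land on a unit domain — the scaling converts the fixed $C^\beta$ bound on $\nabla f$ into an $O(\lambda^{\beta})$-small oscillation hypothesis and produces a $C^2$ bound of order $\lambda^{-2}$ times data, which is polynomial; interpolation with part (1) then upgrades $\lambda^{-2}$ to $\lambda^{-\eps}$.

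For part (3): the point is that the \emph{tangential} derivatives behave better near $\partial B(R)$ than the radial one. I would fix a smooth unit vector field $\xi$ on $\overline{B(R)}$ tangent to all spheres centered at the origin (so $\scalar{\xi(x),x}\equiv 0$), let $g := \scalar{\nabla f,\xi} = \partial_\xi f$, and derive the parabolic equation $g_t = Lg + (\text{commutator terms})$; the commutator $[\,\partial_\xi, L\,]f$ involves only up-to-second spatial derivatives of $f$ with coefficients built from $\xi$ and from $\nabla U, \nabla^2 U$, hence by part (2) it is controlled (with a possibly $\lambda$-dependent but integrable-in-time bound — one is careful here) on $B(R)\times[0,T]$; more importantly, the key gain is at the \emph{boundary data} level: since $f \equiv 0$ on $\partial B(R)\times[0,T]$ and $\xi$ is tangent to $\partial B(R)$, we get $g = \partial_\xi f = 0$ on $\partial B(R)\times[0,T]$, so $g$ also has vanishing Dirichlet data, AND — crucially — $g|_{t=0} = \partial_\xi f_0$ vanishes on $\partial B(R)$ as well (because $f_0$ vanishes on $\partial B(R)$ and $\xi$ is tangential), so the zeroth-order corner compatibility $g|_{t=0}|_{\partial B(R)} = 0$ holds for $g$ exactly as it did for $f$. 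Hence by the same argument as in part (1), applied to the equation for $g$, we obtain $g \in C^{(1+\delta;(1+\delta)/2)}(\overline{B(R)}\times[0,T])$ for every $\delta\in(0,1)$, which is precisely the assertion. The only care needed is that the inhomogeneous (commutator) term and the equation's coefficients lie in the right H\"older classes up to the boundary; the commutator term is a combination of $f$, $\nabla f$, $\nabla^2 f$ against $C^{1,\alpha}$ (or smooth) coefficients, and although $\nabla^2 f$ is only known to be locally bounded with the $\lambda^{-\eps}$ blow-up from part (2), one handles this by first establishing the estimate on $\overline{B(R-\lambda)}\times[\eps,T]$ using interior theory (already available), then near $t=0$ and near $\partial B(R)$ using that $g$ has good corner compatibility — effectively one sets up the estimate for $g$ with its own (compatible) initial-boundary data and absorbs the commutator via the continuity already proven in parts (1)–(2).

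I expect the main obstacle to be part (2): making the blow-up rate of the interior parabolic Schauder constant as $\lambda\to 0$ explicit enough to interpolate down to $\lambda^{-\eps}$, while keeping track that the lower-order coefficients (coming from $\nabla U$) contribute only through their fixed $C^{1,\alpha}(\overline{B(R)})$ norms and do not worsen the rate. Parts (1) and (3) are, by contrast, applications of the standard existence-and-regularity theory for linear parabolic Dirichlet problems in the scenario where only the lowest-order ($i=0$) compatibility condition is available — the whole subtlety of Subsection \ref{subsec:boundary} being precisely that $i=1,2$ in \eqref{eq:compat} may fail — together with the observation that tangential differentiation preserves this lowest-order compatibility.
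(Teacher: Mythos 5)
Your plans for parts (1) and (2) coincide with the paper's proof: part (1) is a citation of the regularity theory for the homogeneous Dirichlet problem when only the zeroth-order compatibility $f_0|_{\partial B(R)}=0$ holds (Lunardi, or $W_p^{(2;1)}$ theory plus Morrey), and part (2) is obtained exactly as you describe, by combining the interior Schauder estimate (whose constant blows up polynomially, like $\lambda^{-(2+\gamma)}$) with H\"older-space interpolation against the $\lambda$-independent $C^{1,\beta}$ bound of part (1), choosing the interpolation parameters so that the resulting exponent is below $\eps$.

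Part (3) as you propose it has a genuine gap. Writing $g=\scalar{\nabla f,\xi}$ in Cartesian coordinates, the commutator is
\[
[\partial_\xi,\Delta]f \;=\; -\,2\sum_{i,j}\partial_i\xi_j\,\partial_i\partial_j f \;-\;\sum_j(\Delta\xi_j)\,\partial_j f ~,
\]
and for a non-constant tangential field $\xi$ the coefficients $\partial_i\xi_j$ do not vanish, so the inhomogeneous term in the equation for $g$ contains the \emph{full} Hessian $\nabla^2 f$. This is precisely the quantity that is not controlled up to the corner $\partial B(R)\times\set{0}$ — indeed controlling second derivatives near the boundary is the whole point of the proposition — so invoking the boundary Schauder theory for $g$ requires an input you do not have; parts (1)--(2) only give continuity of $\nabla f$ up to the boundary and a $C_\eps\lambda^{-\eps}$ blow-up of $\norm{\nabla^2 f}$ on $\overline{B(R-\lambda)}$, neither of which places the inhomogeneity in a H\"older class up to $\partial B(R)\times[0,T]$. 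Your sentence about ``absorbing the commutator via the continuity already proven'' does not resolve this circularity. (One could conceivably push your route through via $W_p^{(2;1)}$ theory, since $(R-|x|)^{-\eps}\in L_p$ for $p<1/\eps$, but you would have to carry that out explicitly.) The paper avoids the issue entirely by a structural trick: it flattens the boundary using the Cartesian-to-polar change of variables on a half-annulus, after which the second-order coefficients $a_{i,j}$ of the transformed equation depend only on the radial variable $r$. Differentiating in an angular direction $\tau$ then gives $D_\tau a_{i,j}\equiv 0$, so no second derivatives of the solution appear in the inhomogeneity — only the harmless term $\sum_i D_\tau b_i\, D_i g$, which is continuous up to the boundary by the part-(1) estimate. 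The rest of your part (3) (vanishing Dirichlet data for the tangential derivative and zeroth-order corner compatibility, since $\xi$ is tangent to $\partial B(R)$ and $f_0$ vanishes there) is correct and is exactly what the paper uses after the coordinate change.
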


\begin{rem}
We were informed by Ki-Ahm Lee and Vladimir Maz'ya that it should actually be true that:
\[
\sup_{t \in [0,T]} \norm{f(\cdot,t)}_{C^{1,1}(\overline{B(R)})} < \infty ~,
\]
but we will not insist on this here since the easier weaker estimate (\ref{eq:weak-C^2}) suffices for our purposes.
\end{rem}

\begin{proof}[Proof of Proposition \ref{prop:regularity}]

1. The first assertion follows from standard regularity theory. Even if the compatibility conditions (\ref{eq:compat}) do not necessarily hold, it follows by \cite[Theorem 5.1.11 (ii)]{LunardiBook} that when $f_0 \in C^{1,\beta}(\overline{B(R)})$ for some $\beta \in (0,1)$ and $f_0|_{\partial B(R)} \equiv 0$, then:
\begin{equation} \label{eq:reg-1}
f \in C^{(1+\beta;(1+\beta)/2)}(\overline{B(R)} \times [0,T]) ~.
\end{equation}
Alternatively, one may employ the Sobolev regularity theory for parabolic PDEs (e.g. \cite[Chapter IV, Theorem 9.1 and subsequent Corollary]{LadyParabolicBook}), which ensures that $f \in W_p^{(2;1)}(\overline{B(R)} \times [0,T])$ for all $p \in (1,\infty)$. Consequently, (\ref{eq:reg-1}) follows by a variant of Morrey's embedding theorem (e.g. \cite[Chapter II, Lemma 3.3]{LadyParabolicBook}).

\bigskip

\noindent 2. This may be deduced from \cite[Theorem 5.15]{LiebermanBook} by considering weighted H\"{o}lder spaces. To avoid these, one may proceed as follows. Applying a standard Schauder-type interior estimate, if $f_0 \in C^{2,\gamma}(\overline{B(R)})$ and each component of $\grad U$ is in $C^{0,\gamma}(\overline{B(R)})$, one checks (see e.g. \cite[p. 355]{LadyParabolicBook}) that:
\begin{equation} \label{eq:reg-2}
 \norm{f}_{C^{(2+\gamma;1+\gamma/2)}(\overline{B(R-\lambda)}\times[0,T])} \leq \frac{C_\gamma}{\lambda^{2+\gamma}} \;\;\; \forall \lambda \in (0,R) ~.
\end{equation}
Combining (\ref{eq:reg-1}) and (\ref{eq:reg-2}), we deduce under the assumptions of Theorem \ref{thm:max}, that for all $\lambda \in (0,R)$:
\begin{eqnarray*}
\sup_{t \in [0,T]} \norm{f(\cdot,t)}_{C^{1,\beta}(\overline{B(R-\lambda)})} \leq B_\beta \;\;\; \forall \beta \in (0,1) ~ ; \\
\sup_{t \in [0,T]} \norm{f(\cdot,t)}_{C^{2,\gamma}(\overline{B(R-\lambda)})} \leq \frac{C_\gamma}{\lambda^{2+\gamma}} \;\;\; \forall \gamma \in (0,1) ~.
\end{eqnarray*}
Since $\partial B(R-\lambda)$ is uniformly smooth for all $\lambda \in (0,R/2)$, one can use interpolation in the spaces of H\"{o}lder differentiable functions (see Lunardi \cite[Corollary 1.2.19,1.2.7]{LunardiBook}), and obtain for any $\eta \in (0,\gamma)$ and $\lambda$ in this range:
\[
\sup_{t \in [0,T]} \norm{f(\cdot,t)}_{C^{2,\eta}(\overline{B(R-\lambda)})}
\leq A_{2+\gamma,2+\eta,1-\beta} B_{\beta}^{\frac{\gamma-\eta}{\gamma+1-\beta}} C_{\gamma}^{\frac{1-\beta+\eta}{\gamma+1-\beta}} \lambda^{-\frac{(2+\gamma)(1-\beta+\eta)}{\gamma+1-\beta}} ~.
\]
By modifying the constants above, the bound remains valid for all $\lambda \in (0,R)$.
Choosing $\eta > 0$ and $1-\beta > 0$ very small, the second part of Proposition \ref{prop:regularity} follows.

\bigskip

\noindent 3. This part is obtained by first flattening the boundary $\partial B(R)$ near a point, and then
 applying the standard parabolic regularity theory to the resulting PDE for $D_\tau f$,  where $\tau$ denotes a vector parallel to the flattened boundary. This procedure is standard, and the details are provided for the reader's convenience.

Let us fix an orthogonal basis $e_1,\ldots,e_n$ of $(\Real^n,|\cdot|)$ and a direction $\xi_0 \in S^{n-1}$.
Let $T : \overline{B(R)} \rightarrow \overline{\Omega}$ denote a smooth diffeomorphism so that $T$ coincides with the usual
Cartesian-to-polar change of coordinates on the half-annulus $A_+ := B(R) \setminus \overline{B(R/2)}) \cap \set{x \in \Real^n ; \scalar{x,\xi_0} > 0}$. Now consider the PDE satisfied by $g = f \circ T^{-1}$ on $\Omega$. Since both $T$ and $T^{-1}$ are smooth and in particular Lipschitz, it is easy to check that $g$ satisfies a uniformly parabolic PDE on $\Omega \times [0,T]$ of the form:
\begin{equation} \label{eq:polar-PDE}
\frac{d}{dt} g = \sum_{i,j} a_{i,j} D^2_{i,j} g + \sum_i b_i D_i g ~,
\end{equation}
where $a_{i,j} = a_{i,j}(y)$ is a uniformly positive-definite smooth matrix and $b_i = b_i(y)$ have the same smoothness as $\grad U$, i.e. $b_i \in C^{2,\alpha}(\overline{\Omega})$. Moreover, since in polar-coordinates:
\[
\Delta = r^{-n+1} \frac{\partial}{\partial r} (r^{n-1} \frac{\partial}{\partial r}) + \frac{1}{r^2} \Delta_{S^{n-1}} ~,
\]
we see that on $T(A_+)$, if we use the natural basis $y = (\theta_1,\ldots,\theta_{n-1},r)$ to write (\ref{eq:polar-PDE}), we actually have:
\begin{equation} \label{eq:polar-matrix}
a_{i,j}(\theta_1,\ldots,\theta_{n-1},r) = \begin{cases} \delta_{i,j}  & i=n \\ \frac{\delta_{i,j}}{r^2} & i=1,\ldots,n-1 \end{cases} ~.
\end{equation}
Finally, since $T$ is a diffeomorphism, $T(\partial B(R)) = \partial \Omega$, and hence the boundary conditions are given by:
\[
g|_{t=0} = g_0 := f_0 \circ T^{-1}  \;\;\;, \;\;\; g|_{\partial \Omega \times [0,T]} \equiv 0 ~.
\]

The usual regularity theory ensures that $g \in C_{loc}^{(4+\alpha;2+\alpha/2)}(\Omega \times [0,T])$, and as in the first part, it follows that:
\begin{equation} \label{eq:g-reg}
g \in C^{(1+\delta;(1+\delta)/2)}(\overline{\Omega} \times [0,T]) \;\;\; \forall \delta \in (0,1) ~.
\end{equation}
Now take the spatial derivative of (\ref{eq:polar-PDE}) in a direction $\tau \in \text{span}(e_1,\ldots,e_{n-1})$. Denoting $g_\tau := D_\tau g$, we obtain
that in $\Omega \times [0,T]$:
\[
\frac{d}{dt} g_\tau = \sum_{i,j} a_{i,j} D^2_{i,j} g_\tau + \sum_{i,j} D_\tau a_{i,j} D^2_{i,j} g + \sum_i b_i D_i g_\tau +  \sum_i D_\tau b_i D_i g ~.
\]
By (\ref{eq:g-reg}), the fourth term on the right hand side, which we denote by $h$, is in $C(\overline{\Omega}\times[0,T])$ (and in fact better). The second term above contains mixed second derivatives of $g$, but fortunately in $T(A_+)$, the matrix $a_{i,j}(y)$ is given by (\ref{eq:polar-matrix}), and hence $D_\tau a_{i,j}(y) = 0$. We conclude that in $T(A_+) \times [0,T]$, $g_\tau$ satisfies the following uniformly parabolic PDE:
\begin{equation} \label{eq:g-tau-PDE}
\frac{d}{dt} g_\tau = \sum_{i,j} a_{i,j}(y) D^2_{i,j} g_\tau + \sum_i b_i(y) D_i g_\tau + h(y,t) ~,
\end{equation}
and that:
\[
g_\tau|_{t=0} = D_\tau g_0 \;\;\; , \;\;\; g_\tau|_{(\partial T(A_+) \cap \partial \Omega) \times [0,T]} \equiv 0 ~.
\]

Employing the standard regularity theory, it follows as in the first part that $g_\tau \in C^{(1+\delta;(1+\delta)/2)}(\overline{\Theta}\times[0,T])$ for any $\delta \in (0,1)$ and open subset $\Theta \subset \Omega$ with smooth boundary, which is in addition bounded away from $\partial T(A_+) \setminus \partial \Omega$.
Recalling that $g = f \circ T^{-1}$ and that $T$ is a polar change-of-coordinates on $T(A_+)$, the third assertion of the proposition follows on $(B(R\xi_0,a) \cap \overline{B(R)}) \times [0,T]$ for some small enough $a>0$ (here $B(z,a)$ denotes the ball of radius $a$ centered at $z$).
By following the bounds obtained in the proof, one may check that these do not depend on the choice of $\xi_0$ or the non-radial direction $\tau$. By compactness (or using the fact that actually $a>0$ does not depend on $\xi_0$), the assertion follows on a uniform neighborhood of $\partial B(R) \times [0,T]$, and the classical theory takes care of the interior regularity. This completes the proof.
\end{proof}

\begin{proof}[Proof of Proposition \ref{prop:boundary}]
Recall that by the classical theory, $f(\cdot,t) \in C^{4,\alpha}(\overline{B(R)})$ for every $t \in [0,T]$. The second fundamental form of a spatial level set $M$ of $f$ at a point $(x,t)$ with $\nabla f(x,t) \neq 0$, i.e. $M = M_{v,t} := \set{z \in \overline{B(R)} ; f(z,t) = v}$ where $v = f(x,t)$, is given by:
\[
II_{M}(x) = - \left . D \frac{\nabla f}{|\nabla f|} \right |_{T_x M}  = - \left . \frac{D^2 f}{|\nabla f|} \brac{Id - \frac{\nabla f}{|\nabla f|} \otimes \frac{\nabla f}{|\nabla f|}} \right |_{T_x M} =
- \left . \frac{D^2 f}{|\nabla f|} \right |_{T_x M} ~.
\]
Since we assumed in (\ref{eq:slope}) that $|\nabla f_0 |  > 0$ on $\partial B(R)$ and since $\nabla f$ is (uniformly) continuous on $\overline{B(R)}\times[0,T]$ by Proposition \ref{prop:regularity} (1), it follows that there exists some $T_0 > 0$ so that $|\nabla f| \geq c' > 0$ on all of $\partial B(R) \times [0,T_0]$. By the strong maximal principle and Hopf's lemma in the parabolic setting (see e.g. \cite[Chapter 2, Theorem 14]{FriedmanPDEBook}), $|\nabla f| \geq c'' > 0$ on all of $\partial B(R) \times [T_0,T]$, and by the uniform continuity of $\nabla f$ we conclude that there exists $R' \in (0,R)$ and $c,C >0$ so that:
\[
0 < c \leq -\scalar{\nabla f(x,t),\frac{x}{|x|}} \leq |\nabla f(x,t)| \leq C , \;\;\; \forall |x| \in [R',R] \;\;\; \forall t \in [0,T] ~,
\]
and hence:
\begin{equation} \label{eq:good-slope2}
c (R - |x|) \leq f(x,t) \leq C (R- |x|), \;\;\; \forall |x| \in [R',R] \;\;\; \forall t \in [0,T] ~.
\end{equation}
Since the level set $M_{0,t}$ coincides with $\partial B(R)$ for all $t \in [0,T]$ ($f>0$ in $B(R)\times [0,T]$ by the strong maximum principle), it follows that:
\[
- \left . \frac{D^2 f}{|\nabla f|} \right |_{x^\perp} = \frac{1}{R} \left . Id \right|_{x^\perp} \;\;\; \forall (x,t) \in \partial B(R) \times [0,T] ~,
\]
where $x^\perp$ is identified with $T_x \partial B(R)$.
By Proposition \ref{prop:regularity} (3), the second spatial derivatives of $f$ involving a non-radial direction are (uniformly) continuous on $\overline{B(R)} \times [0,T]$, and so we deduce that there exists some $R'' \in [R',R)$ so that:
\[
-D^2_{\tau,\tau} f(x,t) \geq \frac{c}{2 R} \text{ and } -D^2_{\tau,\frac{x}{|x|}} f(x,t) \geq -B
\;\;\;\; \forall |x| \in [R'',R] \;\;\; \forall t \in [0,T] \;\;\; \forall \tau \in S^{n-1} \cap x^\perp ~.
\]
where:
\[
B := \max\set{ \abs{D^2_{\tau,\frac{x}{|x|}} f(x,t)} \; ; \; x \in \overline{B(R)} , \tau \in S^{n-1} \cap x^\perp , t \in [0,T] } < \infty ~.
\]
Since the tangential derivatives of $f$ vanish on $\partial B(R)$, it also follows that:
\[
|\scalar{\grad f(x,t), \tau}| \leq B (R - |x|) \;\;\;\; \forall |x| \in [R'',R] \;\;\; \forall t \in [0,T] \;\;\; \forall \tau \in S^{n-1} \cap x^\perp ~.
\]
Lastly, fixing $\eps \in (0,1)$, it follows by Proposition \ref{prop:regularity} (2) and (\ref{eq:good-slope2}) that:
\[
-f(x,t) D^2_{\xi,\xi} f (x,t) \geq - C C_\eps (R - |x|)^{1-\eps} \;\;\;\; \forall |x| \in [R'',R] \;\;\; \forall \xi \in S^{n-1} \;\;\; \forall t \in [0,T] ~.
\]

We are now ready to bound $D^2 V$, using:
\[
D^2 V = -D^2 \log f = \frac{ - f D^2 f + \nabla f \otimes \nabla f}{f^2 } ~.
\]
Given $x$ with $|x| \in [R'',R]$ and a direction $\xi \in S^{n-1}$, write $\xi = \cos(\theta) \tau + \sin(\theta) \rho$, where $\rho = x/|x|$ and $\scalar{\tau,\rho} = 0$. For the purpose below, we can assume without loss of generality that $\theta \in [0,\pi/2]$. At the point $(x,t)$, denoting in addition $d = R - |x|$,
we have by all the estimates above:
\begin{eqnarray*}
f^2 D^2_{\xi,\xi} V  & = & \cos^2(\theta) (- f D^2_{\tau,\tau} f + \scalar{\grad f,\tau}^2 ) + \sin^2(\theta) (- f D^2_{\rho,\rho} f + \scalar{\grad f,\rho}^2 ) \\
& + & 2 \sin(\theta) \cos(\theta) ( - f D^2_{\tau,\rho} f + \scalar{\grad f,\tau} \scalar{\grad f,\rho}) \\
& \geq & \cos^2(\theta) c d \frac{c}{2R} + \sin^2(\theta) ( - C C_\eps d^{1-\eps} + c^2) + 2 \cos(\theta) \sin(\theta) (-C B d  - C B d) ~.
\end{eqnarray*}
We see that if $d \in [0,d_0]$ for some small enough $d_0 \in (0,R-R'']$, we have for some $p,q,r,p',q' > 0$:
\begin{eqnarray*}
f^2 D^2_{\xi,\xi} V & \geq & \cos^2(\theta) p d + \sin^2(\theta) q - 2 \cos(\theta)\sin(\theta) r d \\
& \geq & \cos^2(\theta) \frac{p}{2} d  + \sin^2(\theta) \brac{q - \frac{2 r^2}{p} d} \geq \cos^2(\theta) p' d + \sin^2(\theta) q' ~,
\end{eqnarray*}
and so when $d \in (0,d_0]$:
\[
D^2_{\xi,\xi} V \geq \frac{\cos^2(\theta) p' d + \sin^2(\theta) q'}{C^2 d^2} > 0 ~;
\]
(indeed, this behaviour as a function of $\theta,d$ is the best one can expect).
We conclude that $D^2_{\xi,\xi} V(x,t) \geq 0$ (and in fact, tends to $+\infty$ uniformly in $d$) for all $|x| \in [R-d_0,R]$, $t \in [0,T]$ and $\xi \in S^{n-1}$. The proof is complete.
\end{proof}

\section{Tying up loose ends} \label{sec3}

In this section, we provide a complete justification of the proof of Theorem \ref{thm:main1}, described in the Introduction.  We proceed with the same notations used there. The main technical points which we address in this section are showing that the flow map $S_t$ is globally well-defined on $\mathbb{R}^n$ (see Lemma~\ref{lem:hessian of V bound} and its preceding discussion), that the pushed-forward measure $\nu_t := (S_{t})_* \nu= P_t^U(\exp(-V)) \mu$ converges to  $\mu$ (see Lemma~\ref{lem:convergence}), and that the inverse map  $T_t = S_t^{-1}$ converges (to a contracting map) as $t \to \infty$ (see Lemma~\ref{lem:approx})
.

Let $U,V$ be as in Theorem \ref{thm:main1}. We assume further that $V$ is sufficiently smooth (e.g. $V \in C^{4,\alpha}(\Real^n)$ is more than enough), and that:
\begin{equation} \label{eq:sec3-assumptions}
\norm{\nabla V}_{C^{1,\alpha}(\Real^n)} < \infty \text{ and } \norm{D^3 U}_{L_\infty} < \infty ~.
\end{equation}
We will see how to obtain the general case at the very end of this section.

First, since $\exp(-V) \in C^{4,\alpha}(\Real^n)$ and $U \in C^{3,\alpha}_{loc}(\Real^n)$, the classical regularity theory of parabolic PDEs (e.g. \cite{LadyParabolicBook}) ensures that $f(x,t):=P_t^U(\exp(-V))(x)$, as the unique (bounded) solution to the Cauchy problem:
\begin{equation} \label{eq:sec3-PDE}
 \frac{d}{dt} f = L f ~,~ f|_{t=0} = \exp(-V) ~,
\end{equation}
is $C^{(4+\alpha ; 2+\alpha/2)}_{loc}(\Real^n \times [0,\infty))$, and the strong maximum principle ensures that $f(x,t)$ is strictly positive for all $t \in [0,\infty)$.
Consequently, the advection field $W_t := -\nabla \log P_t^U(\exp(-V))$ is $C^{(3+\alpha;(3+\alpha)/2)}_{loc}(\Real^n \times [0,\infty))$. In particular, the maps $S_t$ defined by:
\begin{equation} \label{eq:ODE}
 \frac{d}{dt} S_t(x) = W_t(S_t(x)) ~,~ S_0 = Id ~,
\end{equation}
are indeed \emph{locally} well-defined as a solution to a flow along a locally Lipschitz vector field (e.g. \cite[Proposition 1.56]{GHLBookEdition2}): for any compact subset $C \subset \Real^n$, there exists $t(C) > 0$, so that (\ref{eq:ODE}) has a solution for any $(x,t) \in C \times [0,t(C))$. To ensure that the maps $S_t$ are globally well-defined, it is enough to show that for any $T > 0$, $W_t(x)$ is \emph{globally} spatially Lipschitz for all $t \in [0,T]$, i.e. $|D W_t(x)| < C(T)$ for all $(x,t) \in \Real^n \times [0,T]$:

\begin{lem}\label{lem:hessian of V bound}
Assuming (\ref{eq:sec3-assumptions}), for all $T>0$, $D^2 \log P_t^U(\exp(-V))(x)$ is uniformly bounded in $\Real^n \times [0,T]$.
\end{lem}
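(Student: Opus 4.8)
The plan is to transfer everything to the non-linear PDE satisfied by $V(\cdot,t):=-\log P^U_t(\exp(-V))$, namely $\tfrac{d}{dt}V=\Delta V-\sscalar{\grad V,\grad U}-\abs{\grad V}^2$ on $\Real^n\times[0,\infty)$, and to control its spatial Hessian $D^2 V$ from both sides uniformly on $\Real^n\times[0,T]$; since $D^2\log P^U_t(\exp(-V))=-D^2 V$, this is precisely the claim. The upper bound $D^2\log P^U_t(\exp(-V))\preceq 0$, i.e.\ $D^2 V\succeq 0$, comes for free: under the hypotheses of Theorem~\ref{thm:main1}, Theorem~\ref{thm:main2} says $V(\cdot,t)$ is convex for all $t$. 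So the real task is the lower bound, $D^2 V(x,t)\preceq C(T)\,Id$, and here I would exploit convexity a second time: since $D^2 V\succeq 0$, its top eigenvalue is dominated by its trace, $\lambda_{\max}(D^2 V)\le \mathrm{tr}(D^2 V)=\Delta V$, so it is enough to bound the scalar $\Delta V$ from above on $\Real^n\times[0,T]$.

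Next I would write down the evolution of $\Delta V$. Differentiating the PDE for $V$ twice in space (legitimate since $f=P^U_t(\exp(-V))\in C^{(4+\alpha;2+\alpha/2)}_{loc}$ and $f>0$ by the strong maximum principle, so $V$ has the same local regularity), a direct computation gives
\[
\brac{\tfrac{d}{dt}-\Delta}\Delta V=-\sscalar{\grad\Delta V,\grad U+2\grad V}-2\,\mathrm{tr}\brac{D^2 V\,D^2 U}-2\,\abs{D^2 V}_{HS}^2-\sscalar{\grad V,\grad\Delta U}~.
\]
The first term is a drift term, harmless for a maximum principle. The second is $\le 0$: $D^2 V$ (Theorem~\ref{thm:main2}) and $D^2 U$ ($U$ convex) are both positive semi-definite, and the Hilbert--Schmidt inner product of two such matrices is non-negative. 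The third is $\le 0$. The fourth is bounded, since $\abs{\grad\Delta U}\le c_n\norm{D^3 U}_{L_\infty}<\infty$ by~(\ref{eq:sec3-assumptions}), so $\abs{\sscalar{\grad V,\grad\Delta U}}\le c_n\norm{D^3 U}_{L_\infty}\sup_x\abs{\grad V(x,t)}$. Hence $\Delta V$ is a subsolution of a linear drift--diffusion equation whose right-hand side is $K\brac{1+\sup_x\abs{\grad V(x,t)}}$.

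It remains to bound $\sup_x\abs{\grad V(x,t)}$, and for this I would run the identical scheme on $\abs{\grad V}^2$: differentiating the PDE for $V$ once in space and pairing with $\grad V$ yields
\[
\brac{\tfrac{d}{dt}-\Delta}\abs{\grad V}^2=-\sscalar{\grad\abs{\grad V}^2,\grad U+2\grad V}-2\,\sscalar{\grad V,(D^2 U)\grad V}-2\,\abs{D^2 V}_{HS}^2~;
\]
the last two terms are $\le 0$ (again convexity of $U$), so $\abs{\grad V}^2$ is a subsolution of a drift--diffusion equation with no zeroth-order term, and the maximum principle gives $\sup_x\abs{\grad V(x,t)}^2\le\sup_x\abs{\grad V(x,0)}^2=\norm{\grad V}_{C^1(\Real^n)}^2<\infty$ by~(\ref{eq:sec3-assumptions}). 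Plugging this back, the subsolution inequality for $\Delta V$ integrates in time to $\sup_x\Delta V(x,t)\le\sup_x\Delta V(x,0)+K' t\le n\norm{D^2 V(\cdot,0)}_{L_\infty(\Real^n)}+K' T=:C(T)$ for $t\in[0,T]$. Combined with $0\preceq D^2 V$, this gives $-C(T)\,Id\preceq D^2\log P^U_t(\exp(-V))\preceq 0$ on $\Real^n\times[0,T]$, which is the assertion.

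The one genuinely delicate point — and where I expect the real work to be — is that both maximum-principle steps take place on the \emph{unbounded} domain $\Real^n\times[0,T]$, so they must be justified by ruling out a ``maximum at infinity''. The structure of $U$ is favourable here: since each $\rho_i'$ is concave on $\Real_+$ with $\rho_i'(0)=0$, one gets $\abs{\grad U(x)}=O(\abs{x})$ and $\norm{\Delta U}_{L_\infty}<\infty$, so (using the gradient bound above) the drift $\grad U+2\grad V$ grows at most linearly. One can then either invoke a Phragm\'en--Lindel\"of / Tychonoff-type maximum principle in a growth class ($u(x,t)\le A\exp(a\abs{x}^2)$), whose applicability to $\abs{\grad V}^2$ and $\Delta V$ must itself be checked from the interior parabolic estimates for $f$; or, more safely, run the argument on balls $B(N)$ with an explicit barrier adapted to the linear-growth drift and let $N\to\infty$, using those same local estimates to control the subsolutions on each fixed ball. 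Everything else is a routine computation.
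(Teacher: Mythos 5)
Your proposal follows essentially the same route as the paper's proof: reduce to an upper bound on the scalar $\Delta V$ via the convexity supplied by Theorem \ref{thm:main2}, derive the same evolution inequality for $\Delta V$ (the paper's (\ref{eq:Z-PDE})), discard the two trace terms by convexity of $U$ and of $V(\cdot,t)$, and integrate the remaining zeroth-order term $-\sscalar{\grad V,\grad \Delta U}$ in time after bounding $\sup_x \abs{\grad V(x,t)}$. The one genuine divergence is how that gradient bound is obtained: the paper invokes Bakry's pointwise commutation estimate $\abs{\grad P_t^U(f)} \leq P_t^U(\abs{\grad f})$ (valid since $U$ is convex), which yields $\norm{\grad V_t}_{L_\infty} \leq \norm{\grad V_0}_{L_\infty}$ with no maximum principle on $\Real^n$ at all, whereas you run a Bochner-type maximum principle on $\abs{\grad V}^2$. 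Your computation is correct, but it adds a second unbounded-domain maximum principle to justify, and here the order of the argument matters: the paper's a-priori polynomial growth bounds for $V$, for the inhomogeneity $h$, and ultimately for $D^2 V_t$ (which legitimize the Phragm\'en--Lindel\"of step for $Z=\Delta V$) are themselves derived \emph{from} the already-established gradient bound, via interior $W_p^{(2;1)}$ and Schauder estimates. If the gradient bound is instead to be the \emph{output} of a maximum principle, you need an independent a-priori growth estimate on $\abs{\grad V}^2 = \abs{\grad f}^2/f^2$ first, which forces you to produce a quantitative positive lower bound on $f(\cdot,t)$ on large balls (e.g.\ by a barrier or Harnack argument) --- doable, and compatible with the $\exp(C\abs{x}^2)$ uniqueness class, but it is precisely the kind of work the paper's semigroup shortcut is designed to avoid. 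You correctly identify the unbounded-domain issue as the real content of the lemma; just be aware that in your variant it bites twice, and the first bite is not resolved by the interior estimates for $f$ alone.
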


\begin{proof}
We denote by abuse of notation $V = V(x,t) = - \log P_t^U(\exp(-V))(x)$ and $V_t = V(\cdot,t)$. Since $D^2 V \geq 0$ by Theorem \ref{thm:main2}, it suffices to show a uniform bound on $Z = \Delta V$. Recall from Section \ref{sec2} that $V$ satisfies:
\begin{equation} \label{eq:V-PDE}
\frac{d}{dt} V = \Delta V - \scalar{\nabla V,\nabla V} - \scalar{\nabla V,\nabla U} ~,~ V|_{t=0} = V_0 ~.
\end{equation}
A direct calculation gives:
\begin{align*}
 \frac{d}{dt} Z  = & \Delta Z - 2 \scalar{\nabla Z , \nabla V} - \scalar{\nabla Z,\nabla U}\\ &  - 2 tr((D^2 V)^* D^2 V) - 2 tr((D^2 V)^* D^2 U) - \scalar{\nabla \Delta U,\nabla V} ~.
\end{align*}
Recalling that $D^2 U \geq 0$ and $D^2 V \geq 0$, we conclude that:
\begin{equation} \label{eq:Z-PDE}
\frac{d}{dt} Z \leq \Delta Z - 2 \scalar{\nabla Z , \nabla V} - \scalar{\nabla Z,\nabla U} - \scalar{\nabla \Delta U,\nabla V} ~.
\end{equation}

To apply the maximum principle to (\ref{eq:Z-PDE}), we need to control the zeroth order (right-most) term. To this end, we claim that:
\begin{equation} \label{eq:DV-bound}
\norm{\nabla V_t}_{L_\infty} \leq \norm{\nabla V_0}_{L_\infty}  \;\;\; \forall t \geq 0 ~.
\end{equation}
This follows e.g. by using the pointwise estimate of Bakry and \'Emery, refined by Bakry \cite[Proposition 1]{BakryRieszPartII}, which when $U$ is convex yields $|\nabla P_t^U(f)| \leq P_t^U(|\nabla f|)$. Together with the maximum principle, this indeed implies that:
\[
|\nabla V_t(x)| = \frac{|\nabla P_t^U(\exp(-V_0))(x)|}{P_t^U(\exp(-V_0))(x)} \leq \frac{P_t^U(|\nabla V_0|\exp(-V_0))(x)}{P_t^U(\exp(-V_0))(x)} \leq \norm{\nabla V_0}_{L_\infty} ~.
\]

Now applying formally the maximum principle to (\ref{eq:Z-PDE}), using (\ref{eq:DV-bound}) and recalling the definition of $Z$, we obtain:
\[
\norm{\Delta V_t}_{L_\infty} \leq \norm{\Delta V_0}_{L_\infty} + t\, n \norm{D^3 U}_{L_\infty} \norm{D V_0}_{L_\infty}  ~.
\]
The assumption (\ref{eq:sec3-assumptions}) ensures (in particular) that all terms above are bounded, and hence $\Delta V_t$ is uniformly bounded on $[0,T]$ and it seems that we are done.

However, there is a technical issue here: to appeal to the maximum principle on the unbounded domain $\Real^n$, we have to {\em a-priori} verify that $\Delta V_t (x)$ does not grow spatially faster than $\exp(C |x|^2)$ for some $C>0$, uniformly in $t \in [0,T]$ (see e.g. \cite[Chapter 2, Theorem 9]{FriedmanPDEBook}). The rest of the proof is dedicated to verifying this a-priori growth rate.

\bigskip

First, observe that $V$ grows spatially at most linearly, uniformly in $t \in [0,T]$. To see this without eluding to compactness, denote by $m$ the minimum of $V_0$, and hence (by the maximum principle) of $V(\cdot,t)$ for any $t \geq 0$. Fix $C>0$ and let $r>0$ be so that:
\[
\exp(-C) \mu(B(r)) + \exp(-m) (1 - \mu(B(r))) < 1 ~.
\]
It follows since $\int \exp(-V(x,t)) d\mu(x) = 1$ for any $t \geq 0$, that for any such $t$ there exists $x_0(t) \in B(r)$ so that $V(x_0(t),t) \leq C$.
Consequently, (\ref{eq:DV-bound}) implies that $V(x,t) \leq \norm{\nabla V_0}_{L_\infty} |x -x_0(t)| + V(x_0(t),t) \leq \norm{\nabla V_0}_{L_\infty} (|x|+r) + C$.

Now write (\ref{eq:V-PDE}) as:
\[
 \frac{d}{dt} V - \Delta V = h ~,~ V|_{t=0} = V_0 ~,
\]
where $-h = \scalar{\nabla V,\nabla V} + \scalar{\nabla V,\nabla U}$. By the assumptions of Theorem \ref{thm:main1}, $|\nabla U|(x)$ grows at most linearly in $|x|$, and together with (\ref{eq:DV-bound}), it follows that $h$ too grows spatially at most linearly. Consequently, applying an interior regularity estimate (e.g.
applying the estimate \cite[Chapter IV, (10.2)]{LadyParabolicBook} for the Sobolev space $W_p^{(2;1)}$ with $p$ arbitrarily large, followed by a variant of Morrey's embedding theorem as in the Corollary after \cite[Chapter IV,Theorem 9.1]{LadyParabolicBook}), it follows that:
\begin{eqnarray*}
&      & \norm{V}_{C^{(1+\alpha;(1+\alpha)/2)}(B(R)\times[0,T])} \\
& \leq & C(n,T,\alpha) ( \norm{h}_{C^0(B(R')\times[0,T])} + \norm{V_0}_{C^2(B(R'))} + \norm{V}_{C^{0}(B(R') \times [0,T])}) ~,
\end{eqnarray*}
for any $\alpha \in (0,1)$ and $1 \leq R \leq R'-1$. Since $\norm{\nabla V_0}_{C^{1+\alpha}(\Real^n)}$ is assumed bounded in (\ref{eq:sec3-assumptions}), and as explained above, $V_0$, $V$ and $h$ grow spatially at most linearly, it follows that so does $\norm{V}_{C^{(1+\alpha;(1+\alpha)/2)}(B(R)\times[0,T])}$.

Using this and arguing as above, we verify that $\norm{h}_{C^{(\alpha;\alpha/2)}(B(R)\times[0,T])}$ grows at most quadratically in $R$. Applying the interior Schauder estimate again (e.g. \cite[Chapter IV, Theorem 10.1]{LadyParabolicBook}), it follows that:
\begin{eqnarray*}
& & \norm{V}_{C^{(2+\alpha;1+\alpha/2)}(B(R)\times[0,T])} \\
& \leq &  C(n,T,\alpha) (
\norm{h}_{C^{(\alpha;\alpha/2)}(B(R')\times[0,T])} +
\norm{V_0}_{C^{2+\alpha}(B(R'))} +
\norm{V}_{C^0(B(R')\times[0,T])}
) ~,
\end{eqnarray*}
for any $1 \leq R \leq R'-1$. Using (\ref{eq:sec3-assumptions}) again, we conclude that $D^2 V_t$ \emph{a-priori} spatially grows at most polynomially, uniformly in $t \in [0,T]$, thereby concluding the proof.
\end{proof}

We conclude from Lemma~\ref{lem:hessian of V bound} that the maps $S_t$ are well-defined (at least under the assumption (\ref{eq:sec3-assumptions})). Moreover, it follows that $S_t$ are diffeomorphisms (e.g. \cite[Theorem 1.61]{GHLBookEdition2}), since the inverse maps $T_{t,t} = T_t := S_t^{-1}$ may be obtained by running the flow backwards:
\[
\frac{d}{d\tau} T_{t,\tau}(x) = -W_{t-\tau}(T_{t,\tau}(x)) ~,~ T_{t,0} = Id ~,~ \tau \in [0,t] ~.
\]
Clearly, the maps $S_t$ and $T_t$ inherit the symmetries of the vector field $W_t = - \nabla \log P_t^U(\exp(-V))$. As explained in the proof of Theorem \ref{thm:max}, $- \log P_t^U(\exp(-V))$ is invariant under the common symmetries of $U$ and $V$, i.e. our symmetry assumptions (\ref{eq:symmetry}), and so its gradient commutes with the group $O(E_1,\ldots,E_k)$ ; our maps therefore satisfy our symmetry assumptions as well.

Theorem \ref{thm:main2} guarantees that $D W_t \geq 0$ and hence $(D W_t)^* + D W_t\geq 0$ for every $t \geq 0$.
Consequently:
\[
 \frac{d}{dt} (DS_t)^*(x) DS_t(x) = (DS_t)^*(x) (D W_t)^*(S_t x) DS_t(x) + (DS_t)^*(x) D W_t(S_t x) DS_t(x) \geq 0 ~,
\]
and hence $(DS_t)^* DS_t \geq Id$ for every $t \geq 0$. In other words, $S_t$ is \emph{locally} an expansion. Since $S_t$ is also a diffeomorphism, it follows that it is in fact an expansion \emph{globally}. Indeed, $(DT_t)^* DT_t \leq Id$, which implies by integration and the triangle inequality that $|T_t(x) - T_t(y)| \leq |x-y|$.

\medskip
Next, we address the question of convergence of $\nu_t := P_t^U(\exp(-V)) \mu$ to $\mu$.
Although we will only require convergence in $L_1$ for the sequel, we state the following for completeness:
\begin{lem} \label{lem:convergence}
As $t \rightarrow \infty$, we have:
\begin{enumerate}
\item $P_t^U(\exp(-V)) \rightarrow 1$ in $L_p(\mu)$, for any $p \in [1,\infty)$.
\item $P_t^U(\exp(-V)) \rightarrow 1$ in $L_\infty(C)$, for any compact set $C \subset \Real^n$.
\item $\norm{\frac{d\nu_t}{dx} - \frac{d\mu}{dx}}_{L_p} \rightarrow 0$ for any $p \in [1,\infty]$.
\end{enumerate}
\end{lem}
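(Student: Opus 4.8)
The plan is to reduce all three statements to two inputs: the exponential $L_2(\mu)$-ergodicity of the semigroup $\set{P_t^U}$, and interior parabolic regularity. Throughout write $g := \exp(-V)$ and $f(x,t) := P_t^U(g)(x)$, and keep in mind three facts. First, since $\mu$ is a log-concave probability measure it satisfies a Poincar\'e inequality, so $-L$ has a spectral gap $\lambda>0$ and $\snorm{P_t^U h - \int h\,d\mu}_{L_2(\mu)} \le e^{-\lambda t}\snorm{h-\int h\,d\mu}_{L_2(\mu)}$. Second, $\exp(-U)$, being a log-concave integrable density, is bounded on $\Real^n$, tends to $0$ at infinity, and is bounded away from $0$ on every compact set. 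Third, under the structural and symmetry hypotheses (cf. Lemma \ref{lem:geom2}) $g$ grows at most exponentially, and only in the directions of $E_0$ where $\mu$ restricts to a non-degenerate Gaussian; hence $g\in L_q(\mu)$ for every $q\in[1,\infty)$ and $\sup_{t\ge0}\snorm{f(\cdot,t)}_{L_q(\mu)}\le\snorm{g}_{L_q(\mu)}<\infty$ by $L_q(\mu)$-contractivity of $P_t^U$. (For the $L_1$ assertions, which are the only ones used later, one may bypass the third fact by truncating $g_M:=\min(g,M)$ and using $\snorm{g-g_M}_{L_1(\mu)}\to0$ together with $L_1(\mu)$-contractivity.)

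For (1): since $\int g\,d\mu=1$, the spectral-gap estimate applied to $h=g$ gives $\snorm{f(\cdot,t)-1}_{L_2(\mu)}\le e^{-\lambda t}\snorm{g-1}_{L_2(\mu)}\to0$. For $p\in[1,2]$ this yields the claim by H\"older's inequality ($\mu$ is a probability measure); for $p\in(2,\infty)$ choose $q>p$ and interpolate, $\snorm{f(\cdot,t)-1}_{L_p(\mu)}\le\snorm{f(\cdot,t)-1}_{L_2(\mu)}^{\theta}\snorm{f(\cdot,t)-1}_{L_q(\mu)}^{1-\theta}$, where the last factor is uniformly bounded in $t$ by the third fact and the first tends to $0$.

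For (2): $f$ solves $\partial_t f=Lf$ with coefficients as regular as $\grad U\in C^{2,\alpha}_{loc}$, and $f(\cdot,t)$ is locally bounded uniformly in $t$ (immediate from the third fact when $g$ is bounded, and in general because $-\log f(\cdot,t)$ is convex with a locally uniform $L_1$-bound). The interior parabolic Schauder estimate (e.g. \cite[Chapter IV, Theorem 10.1]{LadyParabolicBook}, with vanishing source) bounds $\snorm{f}_{C^{(2+\alpha;1+\alpha/2)}(B(R/2)\times[s+1,s+2])}$ in terms of $\snorm{f}_{C^0(B(R)\times[s,s+2])}$, with a constant \emph{independent of $s$}; hence the time-shifted family $\set{(x,\tau)\mapsto f(x,s+\tau)}_{s\ge0}$ is precompact in $C^0(B(R/2)\times[1,2])$ by Arzel\`a--Ascoli. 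Any limit along a sequence $s_j\to\infty$ must coincide with the $L_1(B(R/2),dx)$-limit, which by (1) and the second fact equals the constant $1$; since every subsequential limit is $1$, $f(\cdot,t)\to1$ uniformly on $B(R/2)$, and $R$ being arbitrary this is (2).

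For (3): $\frac{d\nu_t}{dx}-\frac{d\mu}{dx}=(f(\cdot,t)-1)\exp(-U)$, so the case $p=1$ is exactly the $L_1(\mu)$ part of (1). For $p=\infty$ (when $g$ is bounded — the general case is recovered by the approximation carried out at the end of Section~\ref{sec3}), fix $\eps>0$ and use the second fact to choose $R$ with $(\snorm{g}_{L_\infty}+1)\exp(-U(x))<\eps$ for $\abs{x}\ge R$; then $\abs{(f(\cdot,t)-1)\exp(-U)}<\eps$ on $\set{\abs{x}\ge R}$ for every $t$, while on $B(R)$ it is at most $\snorm{\exp(-U)}_{L_\infty}\sup_{B(R)}\abs{f(\cdot,t)-1}$, which is $<\eps$ for large $t$ by (2); the range $p\in(1,\infty)$ follows by interpolation, $\snorm{h}_{L_p(dx)}\le\snorm{h}_{L_1(dx)}^{1/p}\snorm{h}_{L_\infty(dx)}^{1-1/p}$. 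The parts I expect to demand the most care are the \emph{uniformity in the time-shift} of the Schauder constant in (2) (which is why one works with cylinders of fixed time-length rather than letting the interval shrink) and the uniform-in-$t$ bound $\sup_{t}\snorm{f(\cdot,t)}_{L_\infty}<\infty$ underlying the tail estimate in (3) and the interpolation in (1); both reduce to the $L_\infty$- and $L_q(\mu)$-contractivity of $P_t^U$ already recorded above, once one knows $\exp(-V)$ is bounded, or — for the $L_1$ conclusions that are all that is needed later — via the truncation device.
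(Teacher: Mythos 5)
Your argument is correct, and for part (1) and the $p=1$ case of (3) it coincides with the paper's proof (spectral gap $\Rightarrow$ $L_2(\mu)$-convergence, then interpolation using contractivity of $P_t^U$). For part (2) and the $p=\infty$ case of (3), however, you take a genuinely different route. The paper follows Ledoux: it writes $\abs{P_t^U(f)(x)-1}\le\int\abs{P_t^U(f)(x)-P_t^U(f)(y)}\,d\mu(y)$ and invokes the smoothing estimate $\abs{\grad P_t^U(f)}\le\norm{f}_{L_\infty}/\sqrt{2t}$ (valid since $U$ is convex), which yields the \emph{quantitative} bound $\abs{P_t^U(f)(x)-1}\le C(1+\abs{x})/\sqrt{t}$; this gives (2) immediately and gives (3) for $p=\infty$ in one line from the boundedness of $\abs{x}\exp(-U(x))$, with no need to split into a large ball and its complement. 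Your route — interior Schauder estimates on time-shifted cylinders of fixed length, Arzel\`a--Ascoli, and identification of all subsequential limits via the $L_1$ convergence of (1) and the local lower bound on $\exp(-U)$ — is softer and purely qualitative, but it buys something: it does not use the Bakry--\'Emery-type commutation estimate and hence does not need convexity of $U$ at this step (only the spectral gap in (1) does), which is relevant to the remark in Section \ref{sec:comparing} about relaxing $D^2U\ge 0$. Two small points of care, both of which you already flag: your ``third fact'' ($g\in L_q(\mu)$ for all $q$) needs the observation from Lemma \ref{lem:geom2} that $V(x)\ge V(Proj_{E_0}x)$ so that the possible exponential growth of $\exp(-V)$ is confined to the Gaussian directions — worth writing out if you keep the interpolation for $p>2$ — and the paper, like you, implicitly uses $\norm{\exp(-V)}_{L_\infty}<\infty$; your truncation device handles the $L_1$ statements that are actually needed downstream.
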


For the proof, first recall that by (\ref{eq:L-prop}), $-L = -\Delta + \scalar{\nabla,\nabla U}$ is a symmetric positive semi-definite operator on the subspace $C^\infty(\Real^n) \cap L_2(\mu)$, and hence admits a Friedrichs extension to a self-adjoint positive semi-definite operator on a larger dense subspace $\D$ of $L_2(\mu)$, which we also denote by $-L$. Since $U$ is convex and $\mu = \exp(-U(x)) dx$ is a probability measure, it is known that $-L$ has a strictly positive spectral-gap $\lambda_1 > 0$ away from the trivial eigenvalue of $0$, corresponding to the constant functions: $\int - f L f d\mu \geq \lambda_1 \int f^2 d\mu$ for all $f \in \D_0 := \set{ f \in \D ; \int f d\mu = 0}$. For instance, by \cite{KLS} (see also \cite{EMilman-RoleOfConvexity}), one may estimate $\lambda_1 \geq c (\int |x| d\mu(x))^{-2} > 0$ for some universal numeric constant $c>0$.

\begin{proof}[Proof of Lemma~\ref{lem:convergence}]
Since $\lambda_1$ is strictly positive, the Spectral Theorem implies that $P_t^U(\exp(-V)) = \exp(-tL)(\exp(-V))$ tends in $L_2(\mu)$ to the projection of $\exp(-V)$ onto the constant functions, i.e. to the constant function $1 = \int \exp(-V) d\mu$.
Since $P_t^U$ is bounded in $L_\infty$ (as in Subsection \ref{subsec:smooth}), we deduce the first claim for $p \in [2,\infty)$ by interpolation (and by Jensen's inequality this extends to $p \in [1,\infty)$). Next, we follow an argument similar to that used by Ledoux \cite{LedouxLogSobLectureNotes}. Denoting $f = \exp(-V)$, write:
\[
 |P_t^U(f)(x) - 1| = |P_t^U(f)(x) - \int P_t^U(f)(y) d\mu(y)| \leq \int |P_t^U(f)(x) - P_t^U(f)(y)| d\mu(y) ~.
\]
Certainly $|P_t^U(f)(x) - P_t^U(f)(y)| \leq |\grad P_t^U(f)(z)| |x-y|$ for some intermediate point $z \in [x,y]$. But using that $U$ is convex, the following smoothing estimate is known (\cite{LedouxSpectralGapAndGeometry}):
\[
|\grad P_t^U(f)(z)| \leq \frac{1}{\sqrt{2t}} \norm{f}_{L_\infty} ~,
\]
and so:
\[
|P_t^U(f)(x) - 1| \leq \frac{1}{\sqrt{2t}} \norm{f}_{L_\infty} \brac{|x| + \int |y| d\mu(y)} ~.
\]
The uniform convergence (as $t \to \infty$) on compact subsets follows. Moreover, since $|x| \exp(-U(x))$ is necessarily bounded, we obtain the third claim for $p = \infty$.
The third claim for $p=1$ is equivalent to the first one with $p=1$, and so by interpolation, the third claim follows for all $p \in [1,\infty]$.
\end{proof}

Recall that a sequence of Borel measures $\set{\eta_k}$ is said to converge to a Borel measure $\eta$ \emph{weakly} (or in the weak$^*$-topology) if $\int \varphi d\eta_k \rightarrow \int \varphi d\eta$ for any bounded continuous test function $\varphi$; we will denote this by $\eta_k \rightharpoonup \eta$.
We define the $L_1$ distance between two absolutely continuous Borel measures $\eta_1,\eta_2$ on $\Real^n$ to be:
\[
d_{L_1}(\eta_1,\eta_2) := \int_{\Real^n} \abs{\frac{d\eta_1}{dx} - \frac{d\eta_2}{dx}} dx  ~;
\]
this coincides with the usual total-variation distance up to a factor of $2$. Clearly, convergence in $L_1$ implies weak convergence.

\begin{lem} \label{lem:approx}
Let $\set{\mu_k}$ and $\set{\nu_k}$ denote two sequences of absolutely continuous Borel measures on $\Real^n$, such that each $\nu_k$ is the push-forward of $\mu_k$ by a contracting map $T_k : \Real^n \rightarrow \Real^n$. Assume that $d_{L_1}(\mu_k,\mu) \rightarrow 0$ and $\nu_k \rightharpoonup \nu$.
Then there exists a contraction $T : \Real^n \rightarrow \Real^n$ pushing forward $\mu$ onto $\nu$. Moreover, any common symmetries possessed by $T_k$ are preserved by $T$.
\end{lem}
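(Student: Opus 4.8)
The plan is a standard compactness argument for the $1$-Lipschitz maps $T_k$. Since each $T_k$ is a contraction, the family $\{T_k\}$ is equicontinuous, and the only obstruction to extracting a convergent subsequence is that the $T_k$ might drift off to infinity; this is precisely what the tightness of $\{\mu_k\}$ and $\{\nu_k\}$ prevents. Granting uniform boundedness of $\{T_k\}$ on every ball, Arzel\`a--Ascoli together with a diagonalization over the balls $B(m)$, $m \in \mathbb{N}$, yields a subsequence (which I do not relabel) converging locally uniformly to some $T : \Real^n \to \Real^n$. Being a pointwise limit of $1$-Lipschitz maps, $T$ satisfies $|T(x)-T(y)| = \lim_k |T_k(x)-T_k(y)| \le |x-y|$, so $T$ is a contraction; and if every $T_k$ commutes with an isometry $g$, then letting $k \to \infty$ in $T_k(gx) = g(T_k(x))$ shows $T(gx) = g(T(x))$, which gives the statement about common symmetries. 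It then remains to verify that $T_*(\mu) = \nu$.

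For the uniform boundedness I would argue as follows. The convergence $d_{L_1}(\mu_k,\mu) \to 0$ makes $\{\mu_k\}$ tight: for a large ball $B(R_1)$ one has $\mu_k(\Real^n \setminus B(R_1)) \le \mu(\Real^n \setminus B(R_1)) + d_{L_1}(\mu_k,\mu)$, and the finitely many small-$k$ exceptions are absorbed by enlarging $R_1$. Likewise $\nu_k \rightharpoonup \nu$ makes $\{\nu_k\}$ tight, via the Portmanteau inequality $\liminf_k \nu_k(U) \ge \nu(U)$ for open $U$. Testing weak convergence against the constant function $1$ shows the total masses $\mu_k(\Real^n) = \nu_k(\Real^n)$ converge to $\mu(\Real^n) = \nu(\Real^n)$, which we may take to be $1$. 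Fix $R_1,R_2$ with $\mu_k(B(R_1)) > 3/4$ and $\nu_k(B(R_2)) > 3/4$ for all $k$; since $\mu_k(T_k^{-1}(B(R_2))) = \nu_k(B(R_2))$, we get $\mu_k\brac{B(R_1) \cap T_k^{-1}(B(R_2))} > 1/2$, so there is $x_k \in B(R_1)$ with $T_k(x_k) \in B(R_2)$. The $1$-Lipschitz property then gives $|T_k(x)| \le R_1 + R_2 + |x|$ for all $x$ and all $k$, the required uniform bound.

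To identify the limit, I will show $(T_k)_*(\mu_k) \rightharpoonup T_*(\mu)$; combined with the hypothesis $(T_k)_*(\mu_k) = \nu_k \rightharpoonup \nu$ and uniqueness of weak limits, this forces $T_*(\mu) = \nu$, i.e. $T$ pushes $\mu$ forward onto $\nu$. Given a bounded continuous $\varphi$, write
\[
\int \varphi \, d (T_k)_*(\mu_k) - \int \varphi \, d T_*(\mu) = \int (\varphi \circ T_k)\, (d\mu_k - d\mu) + \int (\varphi \circ T_k - \varphi \circ T)\, d\mu .
\]
The first term is at most $\norm{\varphi}_{L_\infty}\, d_{L_1}(\mu_k,\mu) \to 0$. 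For the second, on any large ball $B(R)$ the maps $T_k$ converge to $T$ uniformly and take values in a fixed compact set on which $\varphi$ is uniformly continuous, so $\varphi \circ T_k \to \varphi \circ T$ uniformly on $B(R)$; off $B(R)$ the integrand is bounded by $2\norm{\varphi}_{L_\infty}$ while $\mu(\Real^n \setminus B(R))$ is small, so the second term tends to $0$ as well. Hence $\int \varphi \, d(T_k)_*(\mu_k) \to \int \varphi \, d T_*(\mu)$ for every bounded continuous $\varphi$, as needed.

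The only real obstacle here is the uniform-on-compacta bound on $\{T_k\}$: without exploiting the tightness of both sequences of measures, the contractions could escape to infinity and no subsequence of $\{T_k\}$ would converge. Everything else is routine bookkeeping, and the fact that $T$ is obtained only along a subsequence is harmless, since the lemma asserts merely the existence of one such contraction.
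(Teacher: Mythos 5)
Your proposal is correct and follows essentially the same route as the paper: uniform boundedness of $T_k$ on compacta via the masses of large balls under $\mu_k$ and $\nu_k$, Arzel\`a--Ascoli with diagonalization, and the same two-term splitting to identify $T_*(\mu)=\nu$. The only (harmless) variation is that you use the Portmanteau lower bound for open sets to control $\nu_k(B(R_2))$, where the paper instead invokes convergence of $\nu_k(B(R_2))$ using absolute continuity and the negligible boundary of the ball.
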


\begin{proof}
First, note that $T_k(0)$ must be uniformly bounded. Indeed, let $B(R_1)$ denote a ball around the origin so that $\mu(B(R_1)) \geq 3/4$.
The $L_1$ convergence immediately implies that $\mu_k(B(R_1)) \rightarrow \mu(B(R_1))$, and so $\mu_k(B(R_1)) \geq 2/3$ for large enough $k$. Similarly, if $B(R_2)$ denotes a ball so that $\nu(B(R_2)) \geq 3/4$, it follows easily from the weak convergence that $\nu_k(B(0,R_2)) \rightarrow \nu(B(R_2))$ (here we need to use the fact that the ball has finite perimeter and that our measures are absolutely continuous with respect to Lebesgue measure), and hence $\nu_k(B(R_2)) \geq 2/3$ for large enough $k$.
Consequently, for large enough $k$, $\mu_k(T_k^{-1}(B(R_2))) = \nu_k(B(R_2)) \geq 2/3$, and therefore $T_k^{-1}(B(R_2)) \cap B(R_1)$ is non-empty. Since $T_k$ is a contraction, it follows that $T_k(0) \in B(R_1+R_2)$.

Next, by passing to a subsequence if necessary, we may assume that $T_k(0)$ converges. Since $T_k$ are all contractions, and hence uniformly (Lipschitz) continuous, it follows by compactness and a standard diagonalization argument that, after passing to an appropriate subsequence, $T_k$ uniformly converges on compact subsets of $\Real^n$ to some map $T$, which is consequently a contraction, which preserves the common symmetries of $T_k$. It remains to show that $T$ pushes forward $\mu$ onto $\nu$.

This is equivalent to showing that $\int \varphi(Tx) d\mu(x) = \int \varphi(y) d\nu(y)$ for any bounded continuous test function $\varphi : \Real^n \rightarrow \Real$. Since by definition, for any $k$:
\[
 \int \varphi(T_k x) d\mu_k(x) = \int \varphi(y) d\nu_k(y) ~,
\]
and the right hand side converges to $\int \varphi(y) d\nu(y)$, it remains to show that the left hand side converges to $\int \varphi(Tx) d\mu(x)$. Indeed:
\begin{multline*}
\abs{ \int \varphi(T_k x) d\mu_k(x) - \int \varphi(Tx) d\mu(x) } \\
\leq
\abs{ \int \varphi(T_k x) d\mu_k(x) - \int \varphi(T_k x) d\mu(x) } +
\abs{ \int \varphi(T_k x) d\mu(x) - \int \varphi(T x) d\mu(x) } ~.
\end{multline*}
The first term on the right hand side converges to $0$ since $\varphi$ is bounded and $d_{L_1}(\mu_k,\mu) \rightarrow 0$. The second term converges to $0$ by Lebesgue's dominant convergence theorem, since (the bounded) $\varphi(T_k x)$ pointwise converges to $\varphi(Tx)$ (in fact uniformly on compact subsets). This concludes the proof.
\end{proof}

Lemma \ref{lem:convergence} (case (3) with $p=1$) ensures that $\nu_t$ converges to $\mu$ in $L_1$. Since $\nu$ is the push-forward of $\nu_t$ via $T_t$ which is a contraction, it follows by Lemma \ref{lem:approx} that there exists a contraction $T_\infty$ pushing forward $\mu$ onto $\nu$ and satisfying our symmetry assumptions. This concludes the proof of Theorem \ref{thm:main1} in the case that $U$ and $V$ are assumed smooth and under the additional assumptions of (\ref{eq:sec3-assumptions}). To conclude the theorem in the full generality, apply Lemma \ref{lem:approx} again to see that there exists a contraction pushing forward $\mu$ onto $\nu$, whenever these measures may be approximated by smooth measures satisfying the assumptions of the theorem and (\ref{eq:sec3-assumptions}). Such approximation is always possible by a standard argument: applying the Legendre transform to $V$, redefining the resulting function to be $+\infty$ beyond some large level, and applying the transform again, we obtain a convex Lipschitz function with the same symmetries, and it remains to convolve it with a smooth rotation-invariant mollifier, yielding the first part of (\ref{eq:sec3-assumptions}) ; a similar argument applies to the function $U$, whose special form \eqref{eq:U} reduces the approximation to an easy one-dimensional problem.
Lemma \ref{lem:approx} thus implies the general case of Theorem \ref{thm:main1}.

\section{Applications} \label{sec:apps}

The first application we would like to describe pertains to a generalization of the Gaussian Correlation Conjecture. This conjecture asks whether for any two convex subsets $A,B \subset \Real^n$, which are in addition centrally-symmetric ($C$ is called centrally-symmetric if $C = -C$), the following inequality is valid for the standard Gaussian measure $\gamma_n$ on $\Real^n$:
\begin{equation} \label{eq:GCC}
\gamma_n(A \cap B) \geq \gamma_n(A) \gamma_n(B) ~?
\end{equation}
We refer to \cite{SSZ-GaussianCorrelationConjecture,HargeGCCForEllipsoid,CorderoMassTransportAndGaussianInqs} and the references therein for the history of this conjecture, which remains open for $n \geq 3$. One of the most general results is due to Harg\'e \cite{HargeGCCForEllipsoid}, who confirmed the validity of (\ref{eq:GCC}) when one of the sets is a (centrally-symmetric) ellipsoid. This was subsequently given a different proof by Cordero-Erausquin \cite{CorderoMassTransportAndGaussianInqs}, as a direct corollary of Caffarelli's Contraction Theorem (in this context, it is worthwhile pointing out that
our construction of the expanding map $T^{-1}$ closely resembles Harg\'e's argument).
Replacing Caffarelli's theorem with Theorem \ref{thm:main1} in Cordero-Erausquin's argument, we obtain the following generalization:

\begin{cor} \label{cor:GCC}
Let $\mu = \exp(-U(x)) dx$ denote a probability measure on $\Real^n$ as in Theorem \ref{thm:main1}, which is in addition centrally symmetric (i.e. the quadratic part of $U$ on $E_0$ is assumed even). Let $B$ denote a centrally-symmetric \emph{convex} subset of $\Real^n$ satisfying the following symmetry assumptions:
\[
\exists C_B \subset \Real^{dim E_0 + k} \;\;\;\; \mathbf{1}_B(x) = \mathbf{1}_{C_B}(Proj_{E_0} x,|Proj_{E_1} x|,\ldots,|Proj_{E_k} x|) ~.
\]
Let $A$ denote a centrally-symmetric subset of $\Real^n$ so that, writing for $x \in \Real^n$, $x = (x_0,x_1,\ldots,x_k)$ with $x_i \in E_i$, we have:
\begin{multline} \label{eq:A-prop}
\text{ if } (x_0,x_1,\ldots,x_k) \in A \text{ then } \\
 \forall y_0 \in E_0, \;\; \norm{y_0}_{\E} \leq \norm{x_0}_{\E}, \; \forall t_i \in [-1,1], \text{ we have } (y_0, t_1 x_1, \ldots, t_k x_k) \in A  ~,
\end{multline}
where $\norm{\cdot}_{\E}$ is the norm associated with some centrally-symmetric ellipsoid $\E \subset E_0$.
Then:
\[
\mu(A \cap B) \geq \mu(A) \mu(B) ~.
\]
\end{cor}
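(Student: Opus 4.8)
The plan is to carry out the transportation argument of Cordero-Erausquin \cite{CorderoMassTransportAndGaussianInqs} with Theorem \ref{thm:main1} in place of Caffarelli's Contraction Theorem. We may assume $\mu(B)>0$, as the inequality is otherwise trivial. Let $V : \Real^n \to \Real \cup \set{+\infty}$ be the convex indicator of $B$ (equal to $0$ on $B$ and $+\infty$ off $B$), so that $\nu := \mathbf{1}_B \, \mu / \mu(B)$ is a probability measure with density proportional to $\exp(-(U+V))$; since $B$ is convex and $O(E_1,\ldots,E_k)$-invariant, $V$ is convex and satisfies our symmetry assumptions. Theorem \ref{thm:main1} — which applies to such $V$ by the approximation procedure of Subsection \ref{subsec:smooth} together with Lemma \ref{lem:approx} — then furnishes a contraction $T$, satisfying our symmetry assumptions, with $T_* \mu = \nu$. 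Since $\nu(A) = \mu(T^{-1}(A))$ and $\nu(A) = \mu(A \cap B)/\mu(B)$, the asserted inequality $\mu(A\cap B) \ge \mu(A)\mu(B)$ is equivalent to $\mu(T^{-1}(A)) \ge \mu(A)$, which in turn is immediate from the inclusion $T(A) \subseteq A$. The whole proof thus reduces to establishing this inclusion.

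The first step is a harmless linear reduction to the case where $\mathcal{E}$ is the Euclidean unit ball $B_{E_0}(1)$ of $E_0$: let $\Lambda := \Lambda_0 \oplus Id_{E_0^\perp}$, where $\Lambda_0 : E_0 \to E_0$ is the linear isomorphism with $\Lambda_0(\mathcal{E}) = B_{E_0}(1)$, and replace $(\mu, A, B)$ by $(\Lambda_* \mu, \Lambda A, \Lambda B)$. One verifies directly that $\Lambda_* \mu$ still has a potential of the form (\ref{eq:U}) (only the even positive-definite quadratic $Q$ on $E_0$ being modified), that $\Lambda B$ remains convex, centrally symmetric, and $O(E_1,\ldots,E_k)$-invariant, that $\Lambda A$ satisfies (\ref{eq:A-prop}) with $\mathcal{E}$ replaced by $B_{E_0}(1)$, and that $\mu(A\cap B)$, $\mu(A)$, $\mu(B)$ are all unchanged. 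Renaming, we may therefore assume from now on that $\norm{\cdot}_{\mathcal{E}} = \abs{\cdot}$ on $E_0$.

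Now fix $x = (x_0, x_1, \ldots, x_k) \in A$ with $x_j \in E_j$, and for each $j$ let $x^{[j]}$ be the point obtained from $x$ by setting its $E_j$-component to $0$. The key claim is that $Proj_{E_j} T(x^{[j]}) = 0$ for every $j$: for $j \ge 1$ because $-Id_{E_j} \in O(E_1,\ldots,E_k)$ fixes $x^{[j]}$ and $T$ commutes with it, and for $j = 0$ because $T$ commutes with the reflection $\sigma := (-Id_{E_0}) \oplus Id_{E_0^\perp}$, which also fixes $x^{[0]}$. Granting this, the contraction property yields, for every $j$,
\[
\abs{Proj_{E_j} T(x)} = \abs{Proj_{E_j}(T(x) - T(x^{[j]}))} \le \abs{T(x) - T(x^{[j]})} \le \abs{x - x^{[j]}} = \abs{Proj_{E_j} x} ~;
\]
moreover, for $j \ge 1$ the commutation of $T$ with $O(E_j)$ forces $Proj_{E_j} T(x)$ to be fixed by the stabilizer of $x_j$ in $O(E_j)$, hence $Proj_{E_j} T(x) = t_j x_j$ for a scalar $t_j$, with $\abs{t_j} \le 1$ by the displayed estimate. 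Thus $T(x) = (Proj_{E_0} T(x), t_1 x_1, \ldots, t_k x_k)$ with $\abs{Proj_{E_0} T(x)} \le \abs{x_0}$ and each $t_j \in [-1,1]$, so $T(x) \in A$ by (\ref{eq:A-prop}). This proves $T(A) \subseteq A$ and completes the argument.

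The one point genuinely requiring justification — and the place where the central symmetry hypothesis enters — is the commutation of $T$ with the reflection $\sigma$, which is not among the symmetries explicitly guaranteed by Theorem \ref{thm:main1}. It rests on two observations. First, $\sigma$ is an isometry fixing both $U$ (because the quadratic part $Q$ of $U$ on $E_0$ is even) and $B$; the latter because $B$, being centrally symmetric and invariant under each sign change $-Id_{E_i}$ ($i \ge 1$), is automatically invariant under the sign change $-Id_{E_0}$ alone, since $\sigma = (-Id) \circ (Id_{E_0} \oplus (-Id_{E_1}) \oplus \cdots \oplus (-Id_{E_k}))$. Second, the construction of $T$ respects every isometry of $\Real^n$ fixing $U$ and $V$: for the heat-flow map of Subsection \ref{SS:construction of T} because such an isometry commutes with the operator $L$ of (\ref{eq:L-def}) and fixes $\exp(-V)$, hence leaves $P_t^U(\exp(-V))$ invariant, which makes the vector field $W_t$ of (\ref{eq:W-t-def}) and the flow (\ref{eq:vector-flow}) equivariant; and for the Brenier map $T_{opt}$ by uniqueness of the optimal map between the now isometry-invariant measures $\mu$ and $\nu$ — exactly as in the derivation of the $O(E_1,\ldots,E_k)$-symmetry of $T$. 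All remaining verifications are routine.
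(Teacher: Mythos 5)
Your argument is correct and follows essentially the same route as the paper: reduce $\E$ to a Euclidean ball by a linear map, obtain the symmetric contraction $T$ pushing $\mu$ onto $\mu_B$ via approximation and Lemma \ref{lem:approx}, use the commutation with the reflections in each $E_i$ (the one in $E_0$ being where central symmetry enters) together with the contraction property to get $|Proj_{E_i}T(x)|\le|Proj_{E_i}x|$, and conclude $T(A)\subseteq A$. The only cosmetic difference is that you derive the projection bound by comparing $T(x)$ with $T(x^{[j]})$, whereas the paper compares $T(x)$ with $T(R_i x)$ directly; both are two-line consequences of the same facts.
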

Clearly, this generalizes the result of Harg\'e and Cordero--Erausquin, by choosing $\mu = \gamma_n$ and $E_0 = \Real^n$.
\begin{proof}
First, by applying an appropriate linear transformation $P$ in $E_0$ which leaves the orthogonal complement invariant, we may assume that $\E$ is a Euclidean ball in $E_0$, since $P(B)$ and $P_*(\mu)$ continue to satisfy the assumptions of the theorem (indeed, $P$ only affects the quadratic part of $U$, which remains quadratic and even). Defining the probability measure $\mu_B$ as the restriction of $\mu$ onto $B$, i.e. $\mu_B(C) = \mu(C \cap B) / \mu(B)$, our task is to show that $\mu_B(A) \geq \mu(A)$. It is standard to approximate $\mathbf{1}_B / \mu(B)$ in $L_1(\Real^n)$ by functions of the form $\exp(-V_k)$, where $V_k$ is convex and satisfies the same symmetries as $B$, implying that $\exp(-V_k) \mu$ tends to $\mu_B$ in total-variation. Applying Theorem \ref{thm:main1} and Lemma \ref{lem:approx}, we deduce that there exists a contraction $T$ pushing forward $\mu$ onto $\mu_B$ and satisfying our symmetry assumptions. Since $T$ commutes with $O(E_1,\ldots,E_k)$, it follows easily that $Proj_{E_i} T(x)$ is radial for $i=1,\ldots,k$:
\begin{equation} \label{eq:radial-sym}
 Proj_{E_i} T(x) = T_i(x_0,|x_1|,\ldots,|x_k|) \frac{x_i}{|x_i|} \text{ if $x_i \neq 0$ and 0 otherwise} ~.
\end{equation}
Moreover, since $B$ and $\mu$ were assumed centrally-symmetric, it is easy to check that $T$ will also preserve this additional symmetry. Denoting by $R_i$ the reflection in the subspace $E_i$, i.e. $R_i(x) = x - 2Proj_{E_i} x$ for $i=0,1,\ldots,k$, we conclude that $T$ commutes with all the $R_i$'s.

It remains to note that $T(A) \subset A$. Indeed, using the commutation with $R_i$ and the contraction property of $T$, we have:
\[
 2 |Proj_{E_i} T(x)| = |R_i (T(x)) - T(x)| = | T(R_i(x)) - T(x) | \leq |R_i(x) - x| = 2 |Proj_{E_i} x| ~,
\]
and so $|Proj_{E_i} T(x)| \leq |Proj_{E_i} x|$ for $i=0,1,\ldots,k$. Together with (\ref{eq:radial-sym}) and the symmetries (\ref{eq:A-prop}) of $A$, it follows that $T(A) \subset A$. Consequently $A \subset T^{-1}(A)$, and therefore:
\[
\frac{\mu(A \cap B)}{\mu(B)} = \mu_B(A) = \mu(T^{-1}(A)) \geq \mu(A) ~.
\]
The proof is complete.
\end{proof}
\begin{rem}
It is possible to replace the requirement $t_i \in [-1,1]$ in (\ref{eq:A-prop}) by $t_i \in [0,1]$. This is achieved by using the Brenier map $T_{opt}$ of Theorem \ref{thm:main3} instead of $T$ in the proof above, thereby ensuring that the $\set{T_i}_{i=1}^k$ in (\ref{eq:radial-sym}) are always non-negative, as explained in Section \ref{sec:revisit}.
\end{rem}

The following two additional corollaries may be easily obtained from the previous one by integration by parts:

\begin{cor}
Let $\mu$ denote a probability measure on $\Real^n$ as in Theorem \ref{thm:main1}, which is in addition centrally symmetric. Let $f,g : \Real^n \rightarrow \Real_+$ denote two measurable bounded functions, so that for each $a,b > 0$, the level sets $f^{-1}([a,\infty))$ and $g^{-1}([b,\infty))$ satisfy the assumptions on the sets $A$ and $B$ in Corollary \ref{cor:GCC}, respectively. Then:
\[
 \int f g d\mu \geq \int f d\mu \int g d\mu ~.
\]
\end{cor}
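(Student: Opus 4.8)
The plan is to reduce the statement to Corollary \ref{cor:GCC} by means of the layer-cake (``distribution function'') representation combined with the Fubini--Tonelli theorem; this is the precise meaning of the ``integration by parts'' alluded to above. First I would use that $f$ and $g$ are non-negative, so that for every $x \in \Real^n$,
\[
f(x) = \int_0^{\infty} \mathbf{1}_{\set{f \geq a}}(x)\, da ~,~ g(x) = \int_0^{\infty} \mathbf{1}_{\set{g \geq b}}(x)\, db ~.
\]
Multiplying these two identities and integrating against $\mu$, Tonelli's theorem (every integrand is non-negative and measurable, and the total mass is finite since $\norm{f}_{L_\infty},\norm{g}_{L_\infty} < \infty$ and $\mu$ is a probability measure) yields
\[
\int f g\, d\mu = \int_0^{\infty} \int_0^{\infty} \mu\brac{\set{f \geq a} \cap \set{g \geq b}}\, da\, db ~.
\]

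Next I would invoke the hypothesis: for each $a,b>0$ the level set $\set{f \geq a} = f^{-1}([a,\infty))$ satisfies the assumptions imposed on the set $A$ in Corollary \ref{cor:GCC}, and $\set{g \geq b} = g^{-1}([b,\infty))$ satisfies those imposed on $B$; both are Borel since $f,g$ are measurable. Applying Corollary \ref{cor:GCC} (to the centrally-symmetric measure $\mu$) gives the pointwise (in $a,b$) inequality
\[
\mu\brac{\set{f \geq a} \cap \set{g \geq b}} \geq \mu\brac{\set{f \geq a}} \mu\brac{\set{g \geq b}} ~.
\]
Inserting this into the double integral and separating variables (Tonelli once more, or Fubini since everything is now finite), followed by the layer-cake representation in the reverse direction, one obtains
\[
\int f g\, d\mu \geq \int_0^{\infty} \mu\brac{\set{f \geq a}}\, da \int_0^{\infty} \mu\brac{\set{g \geq b}}\, db = \int f\, d\mu \int g\, d\mu ~,
\]
which is the desired conclusion.

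I do not expect any genuine obstacle here: the only points requiring (routine) care are the Borel-measurability of the maps $a \mapsto \mu(\set{f \geq a})$ and $b \mapsto \mu(\set{g \geq b})$, which holds because they are monotone, and the verification that the relevant level sets fall within the scope of Corollary \ref{cor:GCC} — but this is exactly what the hypotheses on $f$ and $g$ grant. All the substantive content is already packaged in Corollary \ref{cor:GCC} (and ultimately in Theorem \ref{thm:main1}); the present statement is merely its integrated, functional form.
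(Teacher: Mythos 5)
Your proof is correct and is essentially the paper's intended argument: the paper merely states that this corollary ``may be easily obtained from the previous one by integration by parts,'' and the layer-cake representation combined with Tonelli and Corollary \ref{cor:GCC} applied to each pair of level sets is exactly that computation spelled out.
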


\begin{cor}
Let $\mu,\nu$ denote two probability measures as in Theorem \ref{thm:main1}, and assume in addition that both are centrally symmetric. Let
$\Gamma : \Real^n \rightarrow \Real_+$ denote a measurable function such that all of its level sets $\Gamma^{-1}([0,a])$ (individually) satisfy the assumption on the set $A$ in Corollary \ref{cor:GCC}. Then:
\[
\int \Gamma(x) d\nu(x) \leq \int \Gamma(x) d\mu(x) ~.
\]
\end{cor}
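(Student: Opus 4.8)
The plan is to follow the proof of Corollary~\ref{cor:GCC} almost verbatim, using Theorem~\ref{thm:main1} to produce a suitably symmetric contraction from $\mu$ to $\nu$, to show that it maps each level set $\Gamma^{-1}([0,a])$ into itself, and then to pass from sets to $\Gamma$ by the layer-cake (``integration by parts'') formula.

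First I would normalize exactly as in Corollary~\ref{cor:GCC}: choose a linear map $P$ acting on $E_0$ and as the identity on $E_0^\perp$ so that the ellipsoid $\E$ appearing in the hypothesis on the level sets becomes the Euclidean unit ball $B_{E_0}(1)$. One checks that $P_*\mu$ and $P_*\nu$ are again of the form appearing in Theorem~\ref{thm:main1} and remain centrally symmetric (indeed $P$ only modifies the even quadratic part $Q$ on $E_0$, which stays quadratic, positive-definite and even, and leaves the radial summands $\rho_i$ untouched), that the level sets of $\Gamma\circ P^{-1}$ satisfy \eqref{eq:A-prop} with $\E$ replaced by $B_{E_0}(1)$, and that both $\int\Gamma\,d\mu$ and $\int\Gamma\,d\nu$ are unchanged under $P$. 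Hence we may assume $\E=B_{E_0}(1)$.

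Next, since $\mu$ and $\nu$ are both centrally symmetric, $U$ and $U+V$ are even under the reflection $R_0$ in $E_0$, hence so is $V$; thus $V$ is convex, $O(E_1,\ldots,E_k)$-invariant, and $R_0$-invariant. As in the proof of Corollary~\ref{cor:GCC}, Theorem~\ref{thm:main1} then provides a contraction $T$ pushing $\mu$ forward onto $\nu$ which commutes with $O(E_1,\ldots,E_k)$ and, by the central symmetry, also with $R_0$; hence $T$ commutes with the reflection $R_i$ in each $E_i$, $i=0,1,\ldots,k$. Exactly as there, commuting with $R_i$ together with the contraction property yields $|Proj_{E_i}T(x)|\le|Proj_{E_i}x|$ for every $i=0,\ldots,k$, while $O(E_1,\ldots,E_k)$-equivariance forces $Proj_{E_i}T(x)=t_i(x)\,Proj_{E_i}x$ for $i=1,\ldots,k$ with some $t_i(x)\in[-1,1]$; writing $y_0:=Proj_{E_0}T(x)$, we also get $\norm{y_0}_\E=|y_0|\le|Proj_{E_0}x|=\norm{Proj_{E_0}x}_\E$.

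Now fix $a\ge 0$ and put $A:=\Gamma^{-1}([0,a])$. By the preceding description of $T(x)$ and the defining property \eqref{eq:A-prop} of $A$, every $x\in A$ has $T(x)\in A$, so $T(A)\subset A$, i.e.\ $A\subset T^{-1}(A)$, whence $\nu(A)=\mu(T^{-1}(A))\ge\mu(A)$. Passing to complements (recall $\mu,\nu$ are probability measures) gives $\nu(\set{\Gamma>a})\le\mu(\set{\Gamma>a})$ for every $a\ge 0$, and integrating the identity $\int\Gamma\,d\eta=\int_0^\infty\eta(\set{\Gamma>a})\,da$ over $a\in[0,\infty)$ yields $\int\Gamma\,d\nu\le\int\Gamma\,d\mu$. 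I expect no serious obstacle: everything is routine given Corollary~\ref{cor:GCC}, the only delicate bookkeeping being to confirm that $P_*\mu,P_*\nu$ still satisfy the hypotheses of Theorem~\ref{thm:main1}, and to justify that $T$ may be chosen to commute with $R_0$ — which, as in Corollary~\ref{cor:GCC}, follows because when the source and target measures are centrally symmetric the construction of $T$ (via the heat-induced flow of Subsection~\ref{SS:construction of T}, or via the uniqueness of $T_{opt}$) inherits that symmetry.
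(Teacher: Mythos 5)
Your argument is correct and is essentially the proof the paper has in mind: the paper only remarks that this corollary follows from Corollary \ref{cor:GCC} by integration by parts, and your fleshed-out version --- produce the symmetric contraction $T$ pushing $\mu$ onto $\nu$ directly from Theorem \ref{thm:main1}, use commutation with the reflections $R_i$ to deduce $T(A)\subset A$ and hence $\nu(A)\geq\mu(A)$ for every level set $A=\Gamma^{-1}([0,a])$, then apply the layer-cake formula --- is exactly the intended mechanism. The only bookkeeping caveat is that, since each level set is only assumed to satisfy (\ref{eq:A-prop}) \emph{individually}, the ellipsoid and hence the normalizing map $P$ may in principle depend on $a$; this is harmless, because the inequality $\nu(A_a)\geq\mu(A_a)$ is established for each fixed $a$ separately and is unaffected by the normalization.
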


These corollaries generalize the correlation inequalities obtained in \cite{BrascampLiebPLandLambda1,HargeGCCForEllipsoid,CaffarelliContraction} for the case $dim E_0 = n$.
We remark that when $dim E_0 = 0$, the corollaries may be obtained directly without appealing to Theorem \ref{thm:main1}, so the more interesting case is when $0 < dim E_0 < n$.

\medskip

Finally, we also mention that contracting maps constitute a very useful tool to transfer isoperimetric inequalities from one measure-metric space to another.
Note that the measure $\mu$ of Theorem \ref{thm:main1} is a product measure, with each factor being either a Gaussian or a log-concave radially symmetric measure. The isoperimetric inequality satisfied by the former factor is well known \cite{SudakovTsirelson,Borell-GaussianIsoperimetry}, and has recently been identified (up to numeric constants) for the latter factor \cite{HuetSphericallySymmetric}. The tools to transfer these inequalities to the product measure have also recently been obtained \cite{BartheTensorizationGAFA,BCRHard,BCRSoft,EMilmanRoleOfConvexityInFunctionalInqs}, and so consequently, the isoperimetric inequality satisfied by $\mu$ is well understood. Using the contracting map $T$ of Theorem \ref{thm:main1}, it follows that the same isoperimetric inequality is satisfied by the measure $\nu$. We refer to \cite{LatalaJacobInfConvolution} for further examples of using contracting maps to transfer isoperimetric inequalities, and for further information.

\section{Caffarelli's proof revisited} \label{sec:revisit}

Let us now sketch the proof of Theorem \ref{thm:main3}, which is based on the proof of \cite[Theorem 11]{CaffarelliContraction}, but requires
an additional ingredient from \cite{CaffarelliContraction} in the form of Theorem \ref{thm:ingred} below. Throughout this section we use $T$ to denote the Brenier optimal-map.

\subsection{The Radial Case}

We begin with the elementary case when $\mu = \exp(-\rho(|x|)) dx$ and $\nu = \exp(-(\rho+v)(|x|)) dx$ are radial. This case does
not require the use of Theorem \ref{thm:ingred}, and as we will see, clearly motivates the condition $\rho''' \leq 0$ in Theorem \ref{thm:main1}.

First, it is immediate to reduce to the one dimensional case, when $\mu$ and $\nu$ are supported on $\Real_+$. Indeed, by the radial symmetry and the uniqueness of the Brenier map $T = \nabla \varphi$ with $\varphi: \Real^n \rightarrow \Real$ a convex function, it follows that $T$ must also be radially symmetric, i.e. commute with the orthogonal group. Consequently, we may write $\varphi(x) = \phi(|x|)$ with $\phi: \Real_+ \rightarrow \Real$ convex, and $T(r \theta) = T_1(r) \theta$ for $\theta \in S^{n-1}$ and $r \in \Real_+$. $T_1 = \phi' : \Real_+ \rightarrow \Real_+$ is precisely the Brenier map pushing forward $\exp(-\rho(r)) r^{n-1} dr$ onto $\exp(-(\rho(r)+v(r))) r^{n-1} dr$. Denoting $\rho_1(r) = \rho(r) - (n-1) \log r$, we see that $\rho_1$ remains convex and $\rho_1''' \leq 0$, and so it is enough to show that when in addition $v : \Real_+ \rightarrow \Real$ is convex and non-decreasing, the Brenier map $T_1$ pushing forward $\mu_1 = \exp(-\rho_1(r)) dr$ onto $\nu_1 =\exp(-(\rho_1(r)+v(r))) dr$ is a contraction.

Indeed, in the one dimensional case, the derivative of a convex function is simply a monotone non-decreasing one, and so the Brenier map is the unique non-decreasing map pushing forward $\mu_1$ onto $\nu_1$, given by:
\begin{equation} \label{eq:1D-pf}
\int_0^{T_1(x)} \exp(-(\rho(r)+v(r))) dr = \int_0^x \exp(-\rho(r)) dr ~.
\end{equation}
Since $\rho,v$ are assumed smooth enough, so is $T_1$. Taking derivatives, we obtain:
\begin{equation} \label{eq:1D-MA}
\log T_1'(x) = -\rho(x) + \rho(T_1(x)) + v(T_1(x)) ~.
\end{equation}
Assume that the maximum of $T_1'$ is attained at $x_0 \in \Real_+$. To ensure this, one would actually need to restrict $\nu_1$ onto a compact subset, in which case $\lim_{x \rightarrow \infty} T_1'(x) = 0$ and so the (positive) maximum is attained, and conclude with an approximation argument (as in \cite{CaffarelliContraction}) ; we omit the details here. Our task is to show that $T_1'(x_0) \leq 1$. If $x_0 = 0$, since $T_1(0) = 0$ and $\exp(-v(0)) \geq 1$ (otherwise $\mu$ and $\nu$ could not both have total mass 1), it follows that $T_1'(0) \leq 1$, as required. Otherwise, denoting $F = \log T_1'$, since $F$ and $T_1'$ have a local maximum at $x_0$, it follows that $T''_1(x_0) = 0$ and that:
\begin{eqnarray*}
\!\!\!\!\!\!\!\!
0 & \geq & F''(x_0) \\
& = & -\rho_1''(x_0) + (T_1'(x_0))^2 (\rho_1''(T_1(x_0)) + v''(T_1(x_0)))  + T_1''(x_0)(\rho_1'(T_1(x_0)) + v'(T_1(x_0))) \\
& = & -\rho_1''(x_0) + (T_1'(x_0))^2 (\rho_1''(T_1(x_0)) + v''(T_1(x_0))) ~.
\end{eqnarray*}
Since $v'' \geq 0$ and $\rho_1'' \geq 0$, we obtain that:
\begin{equation} \label{eq:compare}
(T_1'(x_0))^2 \leq \frac{\rho_1''(x_0)}{\rho_1''(T_1(x_0))} ~.
\end{equation}
In Caffarelli's argument, $\rho_1$ is a quadratic polynomial, and therefore the right-hand side above is identically $1$. However, since $T_1(x) \leq x$ for all $x \in \Real_+$, as easily verified from (\ref{eq:1D-pf}) and the fact that $v$ is non-decreasing, we obtain by the mean-value theorem that the right-hand side is not greater than 1 as soon as $\rho_1''' \leq 0$. This concludes the proof and explains the latter condition.

\medskip

We remark that in this simple case, the Brenier map and the map we construct in our proof of Theorem \ref{thm:main1} do in fact coincide, since the latter one is
also radially symmetric, and is constructed as a limit of diffeomorphisms, and hence must be monotone on each ray from the origin.

\subsection{The General Case}

Let $\mu = \exp(-U(x)) dx$ and $\nu = \exp(-(U(x) + V(x))) dx$ be two probability measures in $\Real^n$, satisfying the assumptions in Theorem \ref{thm:main1}.
We will actually assume that $\nu$ is supported on a compact convex set $C$, to be specified later on, and that $U \in C^{3,\alpha}(\Real^n)$, $U$ is strictly convex, and $V \in C^{3,\alpha}(C)$ ; the general case follows by a standard approximation argument, under which one may show that the corresponding Brenier maps converge to the gradient of a convex function, i.e. the Brenier map for the limiting measures, and the contraction property is trivially preserved in the limit.

Let $T = \nabla \varphi$ denote the Brenier map pushing forward $\mu$ onto $\nu$, where $\varphi : \Real^n \rightarrow \Real$ is a convex potential.
It follows from our assumptions and Caffarelli's regularity theory \cite{CaffarelliStrictlyConvexIsHolder,CaffarelliHigherHolderRegularity,CaffarelliRegularity} that $\varphi \in C^{5,\alpha}_{loc}(\Real^n)$.
It also follows from the proof of \cite[Lemma 4]{CaffarelliContraction} and the subsequent remark that $\norm{D T}(x) = \max_{\xi \in S^{n-1}} D^2_{\xi,\xi} \varphi(x)$ attains a maximum in $\Real^n$, since $D^2_{\xi,\xi} \varphi(x)$ tends to $0$ as $|x| \rightarrow \infty$ uniformly in $\xi \in S^{n-1}$, when $C$ is convex. We will denote by $x_0$ a point where this maximum is attained. Our task is to show that $\norm{T}_{Lip} := D^2_{e,e} \varphi(x_0) \leq 1$, where $e \in S^{n-1}$ is the eigenvector of $D^2 \varphi(x_0)$ corresponding to its maximal eigenvalue, and hence:
\begin{equation} \label{eq:e-max}
D_e D \varphi(x_0) = D^2_{e,e} \varphi(x_0) e ~.
\end{equation}
As usual, attaining the maximum at $x_0$ implies that:
\begin{equation} \label{eq:MA-max}
\nabla D^2_{e,e} \varphi(x_0) = D^2_{e,e} T(x_0) = 0 ~ , ~  D^2 D^2_{e,e} \varphi(x_0) = D^2_{e,e} DT(x_0) \leq 0 ~.
\end{equation}

As in (\ref{eq:1D-MA}), the change-of-variables formula resulting from the definition of push-forward is:
\begin{equation} \label{eq:MA}
\log \det DT(x) = -U(x) + U(T(x)) + V(T(x)) ~.
\end{equation}
Differentiating (\ref{eq:MA}) twice in the direction of $e$, we obtain:
\begin{eqnarray} \label{eq:bigf}
\!\!\!\!\!\! & & - tr( (DT)^{-1}(x) D_e DT(x) (DT)^{-1}(x) D_e DT(x) ) + tr( (DT)^{-1}(x) D^2_{e,e} DT(x) )\\
\nonumber \!\!\!\!\!\! & = & -D^2_{e,e} U(x) + \scalar{ D^2 (U+V) (T(x)) D_e T(x), D_e T(x)} + \scalar{D (U+V)(T(x)), D^2_{e,e} T(x) } ~.
\end{eqnarray}
Using that $DT = D^2 \varphi > 0$, observe that $D_e DT (DT)^{-1} D_e DT \geq 0$. Recalling by (\ref{eq:MA-max}) that $D^2_{e,e} DT(x_0) \leq 0$, and using the fact that $tr(AB) \geq 0$ if $A,B \geq 0$, it follows that the left-hand side of (\ref{eq:bigf}) is non-positive when evaluated at $x_0$. Noting by (\ref{eq:MA-max}) that the last summand on the right-hand side of (\ref{eq:bigf}) vanishes at this point, and using $D^2 V \geq 0$ and (\ref{eq:e-max}), we conclude that:
\[
D^2_{e,e} U(x_0) \geq \scalar{ D^2 U(T(x_0)) D_e D \varphi (x_0), D_e D \varphi (x_0) } = D^2_{e,e} U(T(x_0)) |D^2_{e,e} \varphi(x_0)|^2 ~.
\]
Since $D^2 U > 0$, we obtain the analogue of (\ref{eq:compare}):
\[
\norm{T}_{Lip}^2 = |D^2_{e,e} \varphi(x_0)|^2 \leq \frac{D^2_{e,e} U(x_0)}{D^2_{e,e} U(T(x_0))} ~.
\]
When $U$ is quadratic, this is already enough to guarantee that $T$ is contracting. To make sure that the right-hand side is not greater than 1 under more general circumstances, we would need by the mean-value theorem to ensure that:
\begin{equation} \label{eq:ensure}
 \left . (D^3 U) \right |_{y}(e,e,x_0 - T(x_0)) \leq 0 \;\;\; \forall y \in [x_0,T(x_0)] ~.
\end{equation}

By the uniqueness of the Brenier map and the symmetries of $\mu$ and $\nu$, we know that $T$ must satisfy our symmetry assumptions. Consequently, as in the proof of Corollary \ref{cor:GCC}, $T$ must act radially on each $E_i$, $i=1,\ldots,k$:
\[
Proj_{E_i} T(x) =\begin{cases}
  T_i(Proj_{E_0} x,|Proj_{E_1} x|,\ldots,|Proj_{E_k} x|) \frac{Proj_{E_i} x}{|Proj_{E_i} x|} & \text{ if $Proj_{E_i} x \neq 0$}, \\
    0   & \text{otherwise}.
\end{cases}
\]
As the gradient of a convex function, we must have $\scalar{T(x) - T(y) , x - y} \geq 0$ for all $x,y \in \Real^n$, and using $y = x - 2 Proj_{E_i} x$ (reflecting $x$ in $E_i$ about the origin) implies that necessarily $T_i \geq 0$. Consequently:
\[
\forall i=1,\ldots,k \;\;\; \exists a_i(x_0) \geq 0 \;\;\; Proj_{E_i} T(x_0) = a_i(x_0) Proj_{E_i} x_0 ~.
\]
We conclude from Lemma \ref{lem:geom1} that (\ref{eq:ensure}) would follow if we could show that:
\begin{equation} \label{eq:a}
\forall x \in \Real^n \;\;\; \forall i = 1,\ldots,k \;\;\; a_i(x) \leq 1 ~.
\end{equation}
Geometrically, this means we that we have reduced the task of showing that $T$ is a contraction, to showing that $T$ is a contraction \emph{with respect to the origin} on each $E_i$. Note that in the radial case, this followed trivially from the monotonicity of $v$.

\medskip

To show (\ref{eq:a}), we require
the following additional ingredient \cite[Theorem 6]{CaffarelliContraction}.
\begin{thm}[Caffarelli] \label{thm:ingred}
Let $U_1 \in C^{1,\alpha}(\Omega_1)$ and $U_2 \in C^{1,\alpha}(\Omega_2)$, where $\Omega_2 = \times_{i=1}^n [a_i,b_i] \subset \Real^n$ and $\Omega_1 \supset \Omega_2$, so that $\int_{\Omega_i} \exp(-U_i(x)) dx = 1$.
Let $\tilde{T}$ denote the Brenier optimal-transport map pushing forward $\exp(-U_1(x))dx$ onto $\exp(-U_2(x))dx$, and let $S$ denote
a fixed subset of the coordinates $\set{1,\ldots,n}$. Assume that for any $x \in \Omega_1$, $y \in \Omega_2$ and $j \in S$:
\begin{equation} \label{eq:ingred-cond}
\text{$\forall i \in S \;\;\; y_i \leq x_i$ and $x_j = y_j$} \; \Rightarrow \; \frac{d}{dx_j} U_1 (x) \leq \frac{d}{dy_j} U_2(y) ~.
\end{equation}
Then $\tilde{T}(x)_i \leq x_i$ for all $i\in S$, for any $x \in \Omega_1$.
\end{thm}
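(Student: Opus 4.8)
The plan is to reproduce Caffarelli's maximum-principle argument for the Monge--Amp\`ere equation (the statement is due to him, so this is a sketch of his strategy). First I would reduce, by a standard approximation — truncating $\Omega_1$ to a large box, mollifying the $U_i$ and renormalizing, then passing to the limit, under which the associated Brenier maps converge to the gradient of a convex function and the coordinate inequalities survive — to the case where $\Omega_1$ is a bounded box, the $U_i$ are smooth, and (by Caffarelli's regularity theory, $\Omega_2$ being convex and the densities bounded between positive constants) the convex potential $\varphi$ with $\tilde{T}=\nabla\varphi$ is smooth and strictly convex. The change-of-variables identity then reads
\[
\log\det D^2\varphi(x) = U_2(\nabla\varphi(x)) - U_1(x) ~,
\]
and the goal is $\partial_{x_j}\varphi(x)\le x_j$ for all $j\in S$ and $x\in\overline{\Omega_1}$.

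Fix $j\in S$ and set $w := \partial_{x_j}\varphi - x_j$, so $w(x) = \tilde{T}(x)_j - x_j$. Differentiating the identity once in the direction $e_j$ and multiplying through by $\det D^2\varphi$ yields a \emph{linear} equation for $w$,
\[
A^{kl}(x)\,\partial_k\partial_l w + \scalar{b(x),\nabla w} = \det D^2\varphi(x)\,\brac{\partial_{x_j}U_2(\nabla\varphi(x)) - \partial_{x_j}U_1(x)} ~,
\]
with $A = \mathrm{cof}\,D^2\varphi \succ 0$ and $b = -\det D^2\varphi\cdot(\nabla U_2)\circ\nabla\varphi$; the operator on the left is degenerate-elliptic with \emph{no} zeroth-order term, which is what makes the maximum principle applicable. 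I would then suppose, for contradiction, that $M := \max_{j\in S}\max_{x\in\overline{\Omega_1}}(\tilde{T}(x)_j - x_j) > 0$, attained at some pair $(j,x_0)$.

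On the upper face $\set{x_j = \beta}$ of $\partial\Omega_1$ (where $\beta \ge b_j$ since $\Omega_1\supset\Omega_2$) one has $\tilde{T}(x)_j\le b_j\le\beta = x_j$, so the maximum cannot be attained there; the remaining faces — and, before truncation, spatial infinity — must be excluded using (\ref{eq:ingred-cond}) together with the relative position of $\Omega_1$ and $\Omega_2$ (in the situation where the theorem is applied, $\Omega_1$ and $\Omega_2$ share the \emph{lower} corner in the $S$-directions, which is what renders the lower faces harmless). Thus $x_0$ is interior, where $\nabla w(x_0)=0$ and $D^2 w(x_0)\preceq 0$; since $A(x_0)\succ 0$, the displayed equation reduces at $x_0$ (the drift term vanishing with $\nabla w$) to $A^{kl}(x_0)\partial_k\partial_l w(x_0) = \det D^2\varphi(x_0)(\partial_{x_j}U_2(\tilde{T}(x_0)) - \partial_{x_j}U_1(x_0))\le 0$, whence
\[
\partial_{x_j}U_1(x_0) \ge \partial_{x_j}U_2(\tilde{T}(x_0)) ~.
\]
Since $\tilde{T}(x_0)_j = x_{0,j} + M$, I would then invoke (\ref{eq:ingred-cond}) at the comparison pair $x = x_0 + \sum_{i\in S}(\tilde{T}(x_0)_i - x_{0,i})^{+}e_i$, $y = \tilde{T}(x_0)$ — for which $y\le x$ in the $S$-coordinates and $x_j = y_j$ by construction — and, exploiting that $(j,x_0)$ is the \emph{global} maximizer rather than merely a maximizer for this fixed $j$, upgrade the resulting one-sided bound on $\partial_{x_j}U_1$ into a contradiction with the inequality just displayed. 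Letting $M\le 0$ and undoing the approximation then completes the proof.

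The hard part will be precisely this last matching step: hypothesis (\ref{eq:ingred-cond}) is formulated at pairs sharing the $j$-th coordinate, whereas at the extremum $\tilde{T}$ \emph{moves} the $j$-th coordinate, so extracting the contradiction requires the careful bookkeeping indicated above, as well as the up-to-the-boundary behaviour of $\tilde{T}$ on the (convex but non-smooth) box $\Omega_1$ needed to place $x_0$ in the interior. These points are handled in detail in \cite{CaffarelliContraction}, to which we refer for the steps not reproduced here; the present sketch is only meant to exhibit that the structural hypothesis enters exactly through the one-sided comparison of $\partial_{x_j}U_1$ with $\partial_{x_j}U_2$ dictated by the Monge--Amp\`ere equation.
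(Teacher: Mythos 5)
First, a point of comparison: the paper does not prove this statement at all -- it is imported verbatim (with cosmetic reformulations) from \cite[Theorem 6]{CaffarelliContraction}, so there is no internal proof to match your sketch against. Judged on its own, your proposal correctly sets up the standard machinery: differentiating $\log\det D^2\varphi = U_2(\nabla\varphi)-U_1$ in $e_j$ does yield a degenerate-elliptic equation for $w=\partial_j\varphi - x_j$ with no zeroth-order term, and at an interior maximum of $w$ the conclusion $\partial_j U_1(x_0)\ge \partial_j U_2(\tilde T(x_0))$ is correct.

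However, the step you yourself flag as ``the hard part'' is a genuine gap, and the mechanism you propose for closing it does not work. At the extremal pair $(j,x_0)$ you know $\partial_j U_1(x_0)\ge\partial_j U_2(\tilde T(x_0))$, while hypothesis (\ref{eq:ingred-cond}), applied to your comparison pair $x=x_0+\sum_{i\in S}(\tilde T(x_0)_i-x_{0,i})^{+}e_i$ and $y=\tilde T(x_0)$, gives $\partial_j U_1(x)\le\partial_j U_2(\tilde T(x_0))$. Chaining these yields $\partial_j U_1(x)\le\partial_j U_1(x_0)$ with $x\ne x_0$ (the shift has a strictly positive $e_j$-component of size $M$), and nothing in the hypotheses -- not even convexity of $U_1$, which is not assumed here -- relates $\partial_j U_1$ at these two points, so no contradiction follows. ``Exploiting that $(j,x_0)$ is the global maximizer'' does not repair this: globality only tells you $\tilde T_i(x_0)-x_{0,i}\le M$ for all $i\in S$, which you have already used to place $y$ below $x$ coordinatewise; it gives no second differential relation at $x_0$. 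The actual proof in \cite{CaffarelliContraction} requires a further idea to reconcile the two inequalities at matching points (together with a strictness/perturbation device, since all inequalities above are non-strict). In addition, the exclusion of the maximum from the lower faces of $\partial\Omega_1$ and from infinity (in the application $\Omega_1=\Real^Q\times\Real_+^S$ is unbounded) is asserted but not argued; this is where Caffarelli's decay estimates for $D^2\varphi$ enter and cannot be dismissed as bookkeeping. As written, the proposal is an honest outline of the strategy with the decisive comparison step and the boundary analysis deferred to the reference -- which is essentially what the paper itself does, but it is not a proof.
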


In our formulation, we have exchanged between source and target measures (using that the Brenier map in this case is precisely the inverse of the original one), removed the assumption that $\Omega_1 = \Omega_2$, and consider only a subset of the coordinates for which the assumption and conclusion hold (as can be easily verified by inspecting the proof).

\medskip

Fix a coordinate structure determined by our decomposition of $\Real^n$ into $E_i$, let $Q$ denote the set of coordinates corresponding to $E_0$, and let $S$ denote the set of all other coordinates, corresponding to the subspaces $E_1,\ldots,E_k$. Set $C = [-R,R]^n$, $\Omega_1 = \Real^Q \times \Real_+^S$, $\Omega_2 = [-R,R]^Q \times [0,R]^S$, $U_1 = U + c_1$ and $U_2 = U + V + c_2$, where $c_i$ are constants designed to make $\exp(-U_i(x))dx$ probability measures on $\Omega_i$. The symmetries of
$T$ described above imply that it is enough to verify (\ref{eq:a}) for $x \in \Omega_1$ and that $T|_{\Omega_1} = \tilde{T}$, where $\tilde{T}$ is given by Theorem \ref{thm:ingred}. Consequently, the desired (\ref{eq:a}) will follow from the conclusion of Theorem \ref{thm:ingred} if we verify (\ref{eq:ingred-cond}).

Fix $j \in S$, corresponding to a subspace $E_l$. Lemma \ref{lem:geom2} implies that $\frac{d}{dy_j} V(y) \geq 0$ for any $y \in \Omega_2$, and so it is enough to verify that for $x \in \Omega_1$ and $y \in \Omega_2$:
\begin{equation} \label{eq:final-red}
\text{$\forall i \in S \;\;\; y_i \leq x_i$ and $x_j = y_j$} \; \Rightarrow \; \frac{d}{dx_j} U(x) \leq \frac{d}{dy_j} U(y) ~.
\end{equation}
But $\frac{d}{dx_j} U(x) = \frac{\rho_l'(|Proj_{E_l} x|)}{|Proj_{E_l} x|} x_j$, and when $x_j$ is fixed, the coefficient in front of it is non-increasing in $|Proj_{E_l} x|$ since $\rho_l'$ was assumed concave and $\rho_l'(0) = 0$. Since $x \in \Omega_1$ and $y \in \Omega_2$, the assumption $y_i \leq x_i$ for all $i \in S$ implies that $|Proj_{E_l} y| \leq |Proj_{E_l} x|$, confirming the desired (\ref{eq:final-red}). This finally concludes the proof.

\section{Comparing the two maps} \label{sec:comparing}
In this section, we compare the map $T$ (as constructed in Subsection~\ref{SS:construction of T}) with the Brenier map $T_{opt}$.

First, it is natural to ask whether the two maps $T$ and $T_{opt}$ coincide, at least under the assumptions of Theorem~\ref{thm:main1}. To analyze this question, recall that $S_t$ was constructed as follows:
\begin{equation} \label{eq:W-eq-again}
\frac{d}{dt} S_t(x) = W_t(S_t(x)) ~,~ S_0 = Id ~,~ \text{with} ~~ W_t := \nabla Z_t ~,~ Z_t := -\log P_t^U(\exp(-V)) ~.
\end{equation}
Denoting $B_t(x) := D^2 Z_t(S_t(x))$ and taking spatial derivatives, we obtain:
\begin{equation} \label{eq:DW-eq-again}
\frac{d}{dt} DS_t(x) = B_t(x) DS_t(x) ~,~ DS_0(x) \equiv Id ~.
\end{equation}
As is well known, a necessary and sufficient condition for being the gradient of a function on a simply connected domain, is having a symmetric derivative tensor. It follows that:
\begin{equation} \label{eq:commutation}
\text{if all of $\set{B_t}_{t \geq 0}$ commute with each other} ~,
\end{equation}
ensuring that $DS_t$ remains symmetric along the flow, then we can conclude that $S_t$ is the gradient of some function (for each $t$). Moreover, we could then write:
\[
DS_t(x) = \exp\brac{\int_0^t B_s(x) ds } ~,
\]
from which it would follow that $DS_t$ is pointwise positive semi-definite, and hence $S_t$ must be the gradient of a \emph{convex} function. The inverse map $T_t = S_t^{-1}$ would then be the gradient of a convex function as well, and this property may be shown to be preserved in the limit as $t \rightarrow \infty$, obtaining the Brenier map transporting $\mu = (S_\infty)_*(\nu)$ onto $\nu$.

Condition (\ref{eq:commutation}) implies that in all one-dimensional situations ($n=1$ or radially symmetric data), both maps $T$ and $T_{opt}$ \emph{do} coincide. However, it is easy to check that generically, the sufficient condition (\ref{eq:commutation}) will be severely \emph{violated}, for instance by constructing examples (see below) so that for some $x$:
\begin{equation} \label{eq:no-commutation2}
0 \neq [\frac{d}{dt} B_t(x), B_t(x) ] = [ D^2 \frac{d}{dt} Z_t + D^3 Z_t D Z_t, D^2 Z_t ](S_t(x)) \;\;\; \forall t \geq 0 ~,
\end{equation}
where $[A,B] = AB - BA$ denotes the Lie bracket. Moreover, it is not hard to show that (\ref{eq:no-commutation2}) implies that $DS_t(x)$ is non-symmetric on some non-empty interval $t \in (0,t_0)$, and that for any non-empty interval $(t_1,t_2) \subset (0,\infty)$, $\set{DS_t(x)}_{t \in (t_1,t_2)}$ cannot all commute. In other words, the path of diffeomorphisms $[0,\infty) \ni t \mapsto S_t$ will generically not coincide with the path of optimal interpolating maps $[0,1) \ni s \mapsto (1-s) Id + s S_{opt}$, where $S_{opt} = T_{opt}^{-1}$ is the Brenier map pushing forward $\nu$ onto $\mu$, and in fact the set of times $t$ where these two paths intersect will be discrete.

All of this suggests that generically, the lack of symmetry (or path separation) should persist in the limit as $t \rightarrow \infty$, and hence that the limiting map $T$ should be different from $T_{opt}$. However, although we believe that (\ref{eq:commutation}) is actually also a necessary condition (at least generically) for obtaining the Brenier map, we are unable to rule out the possibility that the symmetry may be recovered in the limit. In particular, we are unable to show that the two maps are different even for the following simple example, where (almost) everything may be explicitly computed:

\begin{example}
Let $U,V$ be given by:
\begin{align*}
& U(x) = \frac{1}{2}\scalar{A x,x} ~,~ V(x) = \frac{1}{2}\scalar{B x,x} ~, \\
& \text{$A,B$ are positive-definite \emph{non-commuting} matrices} ~,
\end{align*}
and set:
\[
\mu = c_1 \exp(-U(x))dx ~,~ \nu = c_2 \exp(-(U(x) + V(x))) dx ~,
\]
with $\set{c_i}$ chosen so that the resulting measures have total mass $1$.

It is easy to see (e.g. \cite[Example 1.7]{McCannConvexityPrincipleForGases}) that $T_{opt}$ is a linear map given by the positive-definite matrix $C_{opt} = A^{1/2} (A^{1/2} (A+B) A^{1/2})^{-1/2} A^{1/2}$. The Mehler formula \cite{HargeGCCForEllipsoid} for an affine Ornstein-Uhlenbeck diffusion implies that the tensor $D^2 Z_t = -D^2 \log P_t^U(\exp(-V))$ is an explicitly computable fixed matrix $M_t$ for every time $t \geq 0$, and so by (\ref{eq:DW-eq-again}), the flow maps $\set{S_t}$ are also linear, given by a family of matrices $\set{L_t}$. Moreover, $L_t$ satisfy an explicit matrix-valued ODE, and one may also show that $L_t^* (A+M_t) L_t = A+B$.
The resulting map $T$ is then the linear map given by the matrix $L_\infty^{-1}$, where $L_\infty = \lim_{t \rightarrow \infty} L_t$.

Showing that $T \neq T_{opt}$ when $A,B$ do not commute then amounts to proving that $L_\infty$ is not symmetric in this case; we were unable to verify this. When $A,B$ do commute, then so do all the matrices $\set{M_t}$, so (\ref{eq:commutation}) is satisfied and $T = T_{opt}$.
\end{example}

An additional aspect of comparing between $T$ and $T_{opt}$ pertains to the condition that $U$ be convex in Theorems \ref{thm:main1} and \ref{thm:main3}. This condition was absolutely crucial in Caffarelli's argument and the proof of Theorem \ref{thm:main3}. However, an inspection of the proof of Theorem \ref{thm:main1} reveals that this condition was only used in the proof of (\ref{eq:Z-PDE}), (\ref{eq:DV-bound}) and Lemma \ref{lem:convergence}, and it is actually possible to relax our condition to $D^2 U \geq -c Id$ by a careful adaptation of the arguments (and in particular, avoid using the Spectral Theorem, since $-L$ will no longer have a spectral gap).
Unfortunately, the convexity of $U$ actually follows from the other assumptions of Theorem \ref{thm:main1}, namely that $\mu = \exp(-U(x)) dx$ has finite total mass and that $\rho'''_i \leq 0$ on $\Real_+$, so ultimately there is no real gain here in using $T$ over $T_{opt}$. But this difference in the significance of the convexity of $U$ to the proof, perhaps reinforces the intuition that these two maps should be (generically) different.

\medskip

Before concluding, we mention a couple of advantages of working with the map $T$ over the Brenier map $T_{opt}$.
In the proof of the contraction property of $T_{opt}$ (Theorem \ref{thm:main3}), Caffarelli's regularity theory for the fully-nonlinear Monge--Amp\`ere equation was an essential ingredient. In contrast, in our study of the map $T$ (Theorem \ref{thm:main1}), we only employed the classical regularity results for linear parabolic PDEs. This lends our heat-diffusion construction to further generalizations, in situations where the regularity for the Monge--Amp\`ere equation and the Brenier--McCann optimal-transport map has yet to be established, or alternatively is known to be false, for instance in the Riemannian-manifold setting (see \cite{VillaniOldAndNew}).
In addition, other choices for the driving potential $Z_t$ in our flow scheme (\ref{eq:W-eq-again}) are also possible, in accordance to the property one wished to establish.

\setlinespacing{0.93}
\setlength{\bibspacing}{2pt}

\bibliographystyle{plain}

\begin{thebibliography}{10}

\bibitem{AlvarezLasryLions}
O.~Alvarez, J.-M. Lasry, and P.-L. Lions.
\newblock Convex viscosity solutions and state constraints.
\newblock {\em J. Math. Pures Appl. (9)}, 76(3):265--288, 1997.

\bibitem{BakryRieszPartII}
D.~Bakry.
\newblock Transformations de {R}iesz pour les semi-groupes sym\'etriques. {II}.
  \'{E}tude sous la condition {$\Gamma_2\geq 0$}.
\newblock In {\em S\'eminaire de probabilit\'es, {XIX}, 1983/84}, volume 1123
  of {\em Lecture Notes in Math.}, pages 145--174. Springer, Berlin, 1985.

\bibitem{BartheTensorizationGAFA}
F.~Barthe.
\newblock Log-concave and spherical models in isoperimetry.
\newblock {\em Geom. Funct. Anal.}, 12(1):32--55, 2002.

\bibitem{BCRHard}
F.~Barthe, P.~Cattiaux, and C.~Roberto.
\newblock Interpolated inequalities between exponential and {G}aussian,
  {O}rlicz hypercontractivity and isoperimetry.
\newblock {\em Rev. Mat. Iberoamericana}, 22(3):993--1067, 2006.

\bibitem{BCRSoft}
F.~Barthe, P.~Cattiaux, and C.~Roberto.
\newblock Isoperimetry between exponential and {G}aussian.
\newblock {\em Electron. J. Probab.}, 12:no. 44, 1212--1237 (electronic), 2007.

\bibitem{BianGuanConstantRank}
B.~Bian and P.~Guan.
\newblock A microscopic convexity principle for nonlinear partial differential
  equations.
\newblock {\em Invent. Math.}, 177(2):307--335, 2009.


\bibitem{Borell1982QuasiconcavityOfBrownianMotionHittingTime}
C.~Borell.
\newblock Brownian motion in a convex ring and quasiconcavity.
\newblock {\em Comm. Math. Phys.}, 86(1):143--147, 1982.

\bibitem{Borell-GaussianIsoperimetry}
Ch. Borell.
\newblock The {B}runn--{M}inkowski inequality in {G}auss spaces.
\newblock {\em Inventiones Mathematicae}, 30:207--216, 1975.

\bibitem{BrascampLiebPLandLambda1}
H.~J. Brascamp and E.~H. Lieb.
\newblock On extensions of the {B}runn-{M}inkowski and {P}r\'ekopa-{L}eindler
  theorems, including inequalities for log concave functions, and with an
  application to the diffusion equation.
\newblock {\em J. Func. Anal.}, 22(4):366--389, 1976.

\bibitem{BrenierMap}
Y.~Brenier.
\newblock Polar factorization and monotone rearrangement of vector-valued
  functions.
\newblock {\em Comm. Pure Appl. Math.}, 44(4):375--417, 1991.

\bibitem{CaffarelliHigherHolderRegularity}
L.~A. Caffarelli.
\newblock Interior {$W^{2,p}$} estimates for solutions of the
  {M}onge-{A}mp\`ere equation.
\newblock {\em Ann. of Math. (2)}, 131(1):135--150, 1990.

\bibitem{CaffarelliStrictlyConvexIsHolder}
L.~A. Caffarelli.
\newblock Some regularity properties of solutions of {M}onge-{A}mp\`ere
  equation.
\newblock {\em Comm. Pure Appl. Math.}, 44(8-9):965--969, 1991.

\bibitem{CaffarelliRegularity}
L.~A. Caffarelli.
\newblock The regularity of mappings with a convex potential.
\newblock {\em J. Amer. Math. Soc.}, 5(1):99--104, 1992.

\bibitem{CaffarelliContraction}
L.~A. Caffarelli.
\newblock Monotonicity properties of optimal transportation and the {FKG} and
  related inequalities.
\newblock {\em Comm. Math. Phys.}, 214(3):547--563, 2000.

\bibitem{CaffarelliSpruckConvexityProperties}
L.~A. Caffarelli and J.~Spruck.
\newblock Convexity properties of solutions to some classical variational
  problems.
\newblock {\em Comm. Partial Differential Equations}, 7(11):1337--1379, 1982.

\bibitem{ColesantiSalaniQuasiconcaveEnvelope}
A.~Colesanti and P.~Salani.
\newblock Quasi-concave envelope of a function and convexity of level sets of
  solutions to elliptic equations.
\newblock {\em Math. Nachr.}, 258:3--15, 2003.

\bibitem{CorderoMassTransportAndGaussianInqs}
D.~Cordero-Erausquin.
\newblock Some applications of mass transport to {G}aussian-type inequalities.
\newblock {\em Arch. Ration. Mech. Anal.}, 161(3):257--269, 2002.

\bibitem{CFM-BConjecture}
D.~Cordero-Erausquin, M.~Fradelizi, and B.~Maurey.
\newblock The ({B}) conjecture for the {G}aussian measure of dilates of
  symmetric convex sets and related problems.
\newblock {\em J. Funct. Anal.}, 214(2):410--427, 2004.

\bibitem{DemangePhdThesis}
J.~Demange.
\newblock {\em Des \'equations \`a diffusion rapide aux in\'egalit\'es de
  Sobolev sur les mod\`eles de g\'eom\'etrie}.
\newblock PhD thesis, Universit\'e Paul Sabatier, Toulouse, 2006.

\bibitem{DiazKawohl}
J.~I. Diaz and B.~Kawohl.
\newblock On convexity and starshapedness of level sets for some nonlinear
  elliptic and parabolic problems on convex rings.
\newblock {\em J. Math. Anal. Appl.}, 177(1):263--286, 1993.

\bibitem{FriedmanPDEBook}
A.~Friedman.
\newblock {\em Partial differential equations of parabolic type}.
\newblock Prentice-Hall Inc., Englewood Cliffs, N.J., 1964.

\bibitem{GHLBookEdition2}
S.~Gallot, D.~Hulin, and J.~Lafontaine.
\newblock {\em Riemannian geometry}.
\newblock Universitext. Springer-Verlag, Berlin, second edition, 1990.

\bibitem{JapaneseConvexityPreservingSingularParabolicPDE}
Y.~Giga, S.~Goto, H.~Ishii, and M.-H. Sato.
\newblock Comparison principle and convexity preserving properties for singular
  degenerate parabolic equations on unbounded domains.
\newblock {\em Indiana Univ. Math. J.}, 40(2):443--470, 1991.

\bibitem{GrecoKawohl}
A.~Greco and B.~Kawohl.
\newblock Log-concavity in some parabolic problems.
\newblock {\em Electron. J. Differential Equations}, pages No. 19, 12 pp.
  (electronic), 1999.

\bibitem{HargeGCCForEllipsoid}
G.~Harg{\'e}.
\newblock A particular case of correlation inequality for the {G}aussian
  measure.
\newblock {\em Ann. Probab.}, 27(4):1939--1951, 1999.

\bibitem{HargeCorrelationInqs}
G.~Harg{\'e}.
\newblock A convex/log-concave correlation inequality for {G}aussian measure
  and an application to abstract {W}iener spaces.
\newblock {\em Probab. Theory Related Fields}, 130(3):415--440, 2004.

\bibitem{HuetSphericallySymmetric}
N.~Huet.
\newblock Isoperimetry for spherically symmetric log-concave probability
  measures.
\newblock submitted, http://www.math.univ-toulouse.fr/\textasciitilde
  huet/docs/isop-logconc-spher-preprint.pdf, 2009.

\bibitem{IshigeSalaniQuasiConcavityIsNotPreserved}
K.~Ishige and P.~Salani.
\newblock Is quasi-concavity preserved by heat flow?
\newblock {\em Arch. Math. (Basel)}, 90(5):450--460, 2008.

\bibitem{JKO-FokkerPlanckAsGradientDescent}
R.~Jordan, D.~Kinderlehrer, and F.~Otto.
\newblock The variational formulation of the {F}okker-{P}lanck equation.
\newblock {\em SIAM J. Math. Anal.}, 29(1):1--17, 1998.

\bibitem{KLS}
R.~Kannan, L.~Lov{\'a}sz, and M.~Simonovits.
\newblock Isoperimetric problems for convex bodies and a localization lemma.
\newblock {\em Discrete Comput. Geom.}, 13(3-4):541--559, 1995.

\bibitem{KawohlBook}
B.~Kawohl.
\newblock {\em Rearrangements and convexity of level sets in {PDE}}, volume
  1150 of {\em Lecture Notes in Mathematics}.
\newblock Springer-Verlag, Berlin, 1985.

\bibitem{Kennington2}
A.~U. Kennington.
\newblock Convexity of level curves for an initial value problem.
\newblock {\em J. Math. Anal. Appl.}, 133(2):324--330, 1988.

\bibitem{KlartagMarginalsOfInequalities}
B.~Klartag.
\newblock Marginals of geometric inequalities.
\newblock In {\em Geometric aspects of functional analysis}, volume 1910 of
  {\em Lecture Notes in Math.}, pages 133--166. Springer, Berlin, 2007.

\bibitem{KolesnikovPreservingLogConcavity}
A.~V. Kolesnikov.
\newblock On diffusion semigroups preserving the log-concavity.
\newblock {\em J. Funct. Anal.}, 186(1):196--205, 2001.

\bibitem{KolesnikovGeneralizedCaffarelli}
A.~V. Kolesnikov.
\newblock On global {H}\"{o}lder estimates for optimal transportation.
\newblock Manuscript, http://arxiv.org/abs/0810.5043, 2008.

\bibitem{KorevaarClassicalPaper}
N.~J. Korevaar.
\newblock Convex solutions to nonlinear elliptic and parabolic boundary value
  problems.
\newblock {\em Indiana Univ. Math. J.}, 32(4):603--614, 1983.

\bibitem{LadyParabolicBook}
O.~A. Lady{\v{z}}enskaja, V.~A. Solonnikov, and N.~N. Ural{\cprime}ceva.
\newblock {\em Linear and quasilinear equations of parabolic type}.
\newblock Translated from the Russian by S. Smith. Translations of Mathematical
  Monographs, Vol. 23. American Mathematical Society, Providence, R.I., 1967.

\bibitem{LatalaJacobInfConvolution}
R.~Lata{\l}a and J.~O. Wojtaszczyk.
\newblock On the infimum convolution inequality.
\newblock {\em Studia Math.}, 189(2):147--187, 2008.

\bibitem{LedouxLogSobLectureNotes}
M.~Ledoux.
\newblock Concentration of measure and logarithmic {S}obolev inequalities.
\newblock In {\em S\'eminaire de {P}robabilit\'es, {XXXIII}}, volume 1709 of
  {\em Lecture Notes in Math.}, pages 120--216. Springer, Berlin, 1999.

\bibitem{LedouxSpectralGapAndGeometry}
M.~Ledoux.
\newblock Spectral gap, logarithmic {S}obolev constant, and geometric bounds.
\newblock In {\em Surveys in differential geometry. Vol. IX}, pages 219--240.
  Int. Press, Somerville, MA, 2004.

\bibitem{LeeVazquezParabolicNonlinearPDE}
K.-A. Lee and J.~L. V{\'a}zquez.
\newblock Parabolic approach to nonlinear elliptic eigenvalue problems.
\newblock {\em Adv. Math.}, 219(6):2006--2028, 2008.

\bibitem{LewisConvexRings}
J.~L. Lewis.
\newblock Capacitary functions in convex rings.
\newblock {\em Arch. Rational Mech. Anal.}, 66(3):201--224, 1977.

\bibitem{LiebermanBook}
G. ~M. Lieberman.
\newblock {\em Second order parabolic differential equations.}
\newblock World Scientific, Singapore, 1996.

\bibitem{LionsMusielaPreservingConvexityCRAS}
P.-L. Lions and M.~Musiela.
\newblock Convexity of solutions of parabolic equations.
\newblock {\em C. R. Math. Acad. Sci. Paris}, 342(12):915--921, 2006.

\bibitem{LunardiBook}
A.~Lunardi.
\newblock {\em Analytic semigroups and optimal regularity in parabolic
  problems}.
\newblock Progress in Nonlinear Differential Equations and their Applications,
  16. Birkh\"auser Verlag, Basel, 1995.

\bibitem{McCannConvexityPrincipleForGases}
R.~J. McCann.
\newblock A convexity principle for interacting gases.
\newblock {\em Adv. Math.}, 128(1):153--179, 1997.

\bibitem{EMilmanRoleOfConvexityInFunctionalInqs}
E.~Milman.
\newblock On the role of convexity in functional and isoperimetric
  inequalities.
\newblock {\em Proc. London Math. Soc.}, 99(3):32--66, 2009.

\bibitem{EMilman-RoleOfConvexity}
E.~Milman.
\newblock On the role of convexity in isoperimetry, spectral-gap and
  concentration.
\newblock {\em Invent. Math.}, 177(1):1--43, 2009.

\bibitem{OttoVillaniHWI}
F.~Otto and C.~Villani.
\newblock Generalization of an inequality by {T}alagrand and links with the
  logarithmic {S}obolev inequality.
\newblock {\em J. Funct. Anal.}, 173(2):361--400, 2000.


\bibitem{SSZ-GaussianCorrelationConjecture}
G.~Schechtman, Th. Schlumprecht, and J.~Zinn.
\newblock On the {G}aussian measure of the intersection.
\newblock {\em Ann. Probab.}, 26(1):346--357, 1998.

\bibitem{SudakovTsirelson}
V.~N. Sudakov and B.~S. Cirel{\cprime}son~[Tsirelson].
\newblock Extremal properties of half-spaces for spherically invariant
  measures.
\newblock {\em Zap. Nau\v cn. Sem. Leningrad. Otdel. Mat. Inst. Steklov.
  (LOMI)}, 41:14--24, 165, 1974.
\newblock Problems in the theory of probability distributions, II.

\bibitem{ValdimarssonGeneralizedCaffarreli}
S.~I. Valdimarsson.
\newblock On the {H}essian of the optimal transport potential.
\newblock {\em Ann. Sc. Norm. Super. Pisa Cl. Sci. (5)}, 6(3):441--456, 2007.

\bibitem{VillaniTopicsInOptimalTransport}
C.~Villani.
\newblock {\em Topics in optimal transportation}, volume~58 of {\em Graduate
  Studies in Mathematics}.
\newblock American Mathematical Society, Providence, RI, 2003.

\bibitem{VillaniOldAndNew}
C.~Villani.
\newblock {\em Optimal transport - old and new}, volume 338 of {\em Grundlehren
  der Mathematischen Wissenschaften [Fundamental Principles of Mathematical
  Sciences]}.
\newblock Springer-Verlag, Berlin, 2009.

\end{thebibliography}

\def\cprime{$'$}

\end{document}